\documentclass[graybox]{svmult}


\usepackage{amsmath}
\usepackage{amssymb}
\usepackage{mathptmx}
\usepackage{helvet}
\usepackage{courier}
\usepackage{type1cm}

\usepackage{makeidx}

\usepackage{multicol}
\usepackage[bottom]{footmisc}
\usepackage{tikz}


\newcommand{\N}{\mathbb{N}}
\newcommand{\R}{\mathbb{R}}
\newcommand{\T}{\mathbb{T}}
\newcommand{\Z}{\mathbb{Z}}

\newcommand{\n}{\nabla}
\newcommand{\s}{\sigma}
\newcommand{\lra}{\rightarrow}
\newcommand{\lb}{\lbrace}
\newcommand{\rb}{\rbrace}


\makeindex


\begin{document}

\title*{Direct and Inverse Variational Problems on Time Scales: A Survey\thanks{This 
is a preprint of a paper whose final and definite form will be published in the 
Springer Volume \emph{Modeling, Dynamics, Optimization and Bioeconomics II},
Edited by A. A. Pinto and D. Zilberman (Eds.),
Springer Proceedings in Mathematics \& Statistics. Submitted 03/Sept/2014; 
Accepted, after a revision, 19/Jan/2016.}}

\author{Monika Dryl and Delfim F. M. Torres}

\institute{Monika Dryl
\at Center for Research and Development in Mathematics and Applications (CIDMA),\\
Department of Mathematics, University of Aveiro, 3810--193 Aveiro, Portugal\\
\email{monikadryl@ua.pt}
\and Delfim F. M. Torres
\at Center for Research and Development in Mathematics and Applications (CIDMA),\\
Department of Mathematics, University of Aveiro, 3810--193 Aveiro, Portugal\\
\email{delfim@ua.pt}}


\maketitle


\abstract{We deal with direct and inverse problems of the calculus of variations on arbitrary time scales.
Firstly, using the Euler--Lagrange equation and the strengthened Legendre condition, we give a general form
for a variational functional to attain a local minimum at a given point of the vector space.
Furthermore, we provide a necessary condition for a dynamic integro-differential equation to be an Euler--Lagrange equation
(Helmholtz's problem of the calculus of variations on time scales). New and interesting results for the
discrete and quantum settings are obtained as particular cases. Finally, we consider very general
problems of the calculus of variations given by the composition of a certain scalar function
with delta and nabla integrals of a vector valued field.}


\section{Introduction}

The theory of time scales is a relatively new area, which was introduced in 1988 by Stefan Hilger
in his Ph.D. thesis \cite{phdHilger,Hilger,Hilger2}. It bridges, generalizes and extends
the traditional discrete theory of dynamical systems (difference equations)
and the theory for continuous dynamical systems (differential equations) \cite{BohnerDEOTS}
and the various dialects of $q$-calculus \cite{Ernst,MyID:266} into a single unified theory
\cite{BohnerDEOTS,MBbook2003,Lakshmikantham}.

The calculus of variations on time scales was introduced in 2004 by Martin Bohner \cite{BohnerCOVOTS}
(see also \cite{MR1908827,HilscgerZeidan}) and has been developing rapidly
in the past ten years, mostly due to its great potential for applications, e.g.,
in biology \cite{BohnerDEOTS}, economics \cite{Almeida:iso,MR2218315,Atici:HMMS,MR2405376,natalia:CV}
and mathematics \cite{MR2662835,MR2668257,MR2879335,Torres}.
In order to deal with nontraditional applications in economics,
where the system dynamics are described on a time scale partly continuous and partly discrete,
or to accommodate nonuniform sampled systems, one needs to work with variational problems
defined on a time scale \cite{MR2218315,Atici:comparison,PhD:thesis:Monia,MyID:267}.

This survey is organized as follows. In Section~\ref{preliminaries}
we review the basic notions of the time-scale calculus:
the concepts of delta derivative and delta integral
(Section~\ref{sec:prelim:delta}); the analogous backward concepts
of nabla differentiation and nabla integration (Section~\ref{sec:prelim:nabla});
and the relation between delta/forward and nabla/backward approaches
(Section~\ref{subsec:relation:d:n}). Then, in Section~\ref{CoV},
we review the central results of the recent and powerful
calculus of variations on time scales. Both delta and nabla
approaches are considered (Sections~\ref{sec:calculus:of:variations}
and \ref{subsec:nabla:CoV}, respectively). Our results begin with
Section~\ref{chapter_5_P3+P4}, where we investigate inverse problems
of the calculus of variations on time scales. To our best knowledge,
and in contrast with the direct problem, which is already well studied
in the framework of time scales \cite{Torres},
the inverse problem has not been studied before. Its investigation
is part of the Ph.D. thesis of the first author \cite{PhD:thesis:Monia}.
Let here, for the moment and just for simplicity, the time scale $\mathbb{T}$
be the set $\mathbb{R}$ of real numbers. Given $L$, a Lagrangian function,
in the ordinary/direct fundamental problem of the calculus of variations
one wants to find extremal curves $y : [a, b] \rightarrow \mathbb{R}^n$
giving stationary values to some action integral (functional)
$$
\mathcal{I}(y)=\int\limits_{a}^{b} L(t,y(t),y'(t)) dt
$$
with respect to variations of $y$ with fixed boundary conditions $y(a)=y_{a}$ and $y(b)=y_{b}$.
Thus, if in the direct problem we start with a Lagrangian
and we end up with extremal curves, then one might expect as inverse problem
to start with extremal curves and search for a Lagrangian. Such inverse
problem is considered, in the general context of time scales, in Section~\ref{P3}:
we describe a general form of a variational functional having an extremum at a given function
$y_{0}$ under Euler--Lagrange and strengthened Legendre conditions (Theorem~\ref{P3:th:integrand:2}).
In Corollary~\ref{P3:cor:isolated} the form of the Lagrangian $L$
on the particular case of an isolated time scale is presented and we end
Section~\ref{P3} with some concrete cases and examples.
We proceed with a more common inverse problem of the calculus of variations
in Section~\ref{P4}. Indeed, normally the starting point are not the extremal curves but,
instead, the Euler--Lagrange equations that such curves must satisfy:
\begin{equation}
\label{eq:EL}
\frac{\partial L}{\partial y} - \frac{d}{dt} \frac{\partial L}{\partial y'} = 0
\Leftrightarrow
\frac{\partial L}{\partial y}
- \frac{\partial^2 L}{\partial t \partial y'}
- \frac{\partial^2 L}{\partial y \partial y'} y'
- \frac{\partial^2 L}{\partial y' \partial y'} y'' = 0
\end{equation}
(we are still keeping, for illustrative purposes, $\mathbb{T} = \mathbb{R}$).
This is what is usually known as the inverse problem of the calculus of variations:
start with a second order ordinary differential equation and determine a Lagrangian $L$
(if it exists) whose Euler--Lagrange equations \emph{are the same as}
the given equation. The problem of variational formulation of differential equations
(or the inverse problem of the calculus of variations) dates
back to the 19th century. The problem seems to have been posed by Helmholtz in 1887,
followed by several results from Darboux (1894), Mayer (1896), Hirsch (1897),
Volterra (1913) and Davis (1928, 1929) \cite{MR2732180}.
There are, however, two different types of inverse problems,
depending on the meaning of the phrase ``are the same as''.
Do we require the equations to be the same or do we allow multiplication by functions
to obtain new but equivalent equations?
The first case is often called \emph{Helmholtz's inverse problem}:
find conditions under which a given differential equation is an Euler--Lagrange equation.
The latter case is often called the \emph{multiplier problem}:
given $f(t,y,y',y'') = 0$, does a function $r(t,y,y')$ exist such that the equation
$r(t,y,y') f(t,y,y',y'') = 0$ is the Euler--Lagrange equation of a functional?
In this work we are interested in Helmholtz's problem. The answer to this problem
in $\T = \R$ is classical and well known: the complete solution to Helmholtz's problem
is found in the celebrated 1941 paper of Douglas \cite{Douglas}.
Let $O$ be a second order differential operator.
Then, the differential equation $O(y) = 0$ is a second order Euler--Lagrange equation
if and only if the Fr\'{e}chet derivatives of $O$ are self-adjoint.
A simple example illustrating the difference between both inverse problems
is the following one. Consider the second order differential equation
\begin{equation}
\label{eq:EL:ex:introd:fric}
m y'' + h y' + k y = f.
\end{equation}
This equation is not self-adjoint and, as a consequence,
there is no variational problem with such Euler--Lagrange equation.
However, if we multiply  the equation by $p(t) = \exp(ht/m)$, then
\begin{equation}
\label{eq:EL:ex:introd}
m \frac{d}{dt} \left[\exp(ht/m) y'\right] + k \, \exp(ht/m) y = \exp(ht/m) f
\end{equation}
and now a variational formulation is possible: the Euler--Lagrange equation
\eqref{eq:EL} associated with
$$
\mathcal{I}(y)=\int\limits_{t_{0}}^{t_{1}} \exp(ht/m)
\left[ \frac{1}{2} m y'^2 - \frac{1}{2} k y^2 - f y \right] dt
$$
is precisely \eqref{eq:EL:ex:introd}. A recent theory of the calculus
of variations that allows to obtain \eqref{eq:EL:ex:introd:fric}
directly has been developed, but involves Lagrangians depending
on fractional (noninteger) order derivatives \cite{livro:FC:Comput,livro:Adv:CoV,livro:FC:Int}.
For a survey on the fractional calculus of variations, which is not our subject here,
we refer the reader to \cite{MyID:288}. For the time scale $\T = \Z$,
available results on the inverse problem of the calculus of variations
are more recent and scarcer. In this case Helmholtz's inverse problem can be formulated as follows:
find conditions under which a second order difference equation
is a second order discrete Euler--Lagrange equation.
Available results in the literature go back to the works of
Cr\u{a}ciun and Opri\c{s} (1996) and Albu and Opri\c{s} (1999)
and, more recently, to the works of Hydon and Mansfeld (2004)
and Bourdin and Cresson (2013) \cite{MR1770981,Helmholtz,MR1609049,MR2049870}.
The main difficulty to obtain analogous results to those
of the classical continuous calculus of variations
in the discrete (or, more generally, in the time-scale) setting
is due to the lack of chain rule. This lack of chain rule
is easily seen with a simple example.
Let $f,g: \mathbb{Z} \rightarrow \mathbb{Z}$ be defined by
$f\left(  t\right)  =t^{3}$, $g\left(  t\right)  =2t$.
Then,
$\Delta\left(  f\circ g\right) = \Delta\left(  8t^{3}\right)=8\left(
3t^{2}+3t+1\right)  =24t^{2}+24t+8$
and
$\Delta f\left(  g\left(  t\right)  \right)  \cdot \Delta g\left(
t\right)  =\left(  12t^{2}+6t+1\right)  2=24t^{2}+12t+2$.
Therefore,
$\Delta \left(  f\circ g\right) \neq \Delta f\left(  g\left(  t\right)
\right)  \Delta g\left(  t\right)$.
The difficulties caused by the lack of a chain rule in a general time scale $\T$,
in the context of the inverse problem of the calculus of variations on time scales,
are discussed in Section~\ref{final remarks}. To deal with the problem,
our approach to the inverse problem of the calculus
of variations uses an integral perspective instead of the classical differential point of view.
As a result, we obtain a useful tool to identify integro-differential
equations which are not Euler--Lagrange equations on an arbitrary time scale $\mathbb{T}$.
More precisely, we define the notion of self-adjointness of a first order integro-differential equation
(Definition~\ref{P4:def:self:adj:int:diff}) and its equation of variation (Definition~\ref{P4:def:eq:var}).
Using such property, we prove a necessary condition for an integro-differential equation
on an arbitrary time scale $\mathbb{T}$ to be an Euler--Lagrange equation (Theorem~\ref{P4:th:necess:EL:int:diff}).
In order to illustrate our results we present Theorem~\ref{P4:th:necess:EL:int:diff} in the particular time scales
$\mathbb{T}\in\left\{ \mathbb{R},h\mathbb{Z},\overline{q^{\mathbb{Z}}}\right\}$, $h > 0$, $q>1$
(Corollaries~\ref{P4:cor:R}, \ref{P4:cor:hZ}, and \ref{P4:cor:qZ}).
Furthermore, we discuss equivalences between: (i) integro-differential equations
\eqref{P4:eq:int:diff:1} and second order differential equations \eqref{P4:eq:6}
(Proposition~\ref{P4:prop:1}), and (ii) equations of variations of them
on an arbitrary time scale $\mathbb{T}$ (\eqref{P4:eq:eq:var}
and \eqref{P4:eq:eq:var:1}, respectively). As a result, we show
that it is impossible to prove the latter equivalence due to lack of
a general chain rule on an arbitrary time scale \cite{BohGus1,BohnerDEOTS}.
In Section~\ref{sec:mr} we address the direct problem of the calculus
of variations on time scales by considering
a variational problem which may be found often
in economics (see \cite{MalinowskaTorresCompositionDelta} and references therein).
We extremize a functional of the calculus of variations that is the composition
of a certain scalar function with the delta and nabla integrals
of a vector valued field, possibly subject to boundary conditions and/or isoperimetric constraints.
In Section~\ref{subsec:EL} we provide general Euler--Lagrange equations in integral form (Theorem~\ref{P6:th:main}),
transversality conditions are given in Section~\ref{sec:nbc}, while Section~\ref{sub:sec:iso:p}
considers necessary optimality conditions for isoperimetric problems on an arbitrary time scale.
Interesting corollaries and examples are presented in Section~\ref{sec:examples}.
We end with Section~\ref{sec:conc} of conclusions and open problems.


\section{Preliminaries}
\label{preliminaries}

A time scale $\mathbb{T}$
is an arbitrary nonempty closed subset of $\mathbb{R}$.
The set of real numbers $\R$, the integers $\Z$, the natural numbers $\N$, the nonnegative integers
$\N_{0}$, an union of closed intervals $[0,1]\cup [2,7]$
or the Cantor set are examples of time scales,
while the set of rational numbers $\mathbb{Q}$, the irrational numbers
$\R \setminus\mathbb{Q}$, the complex numbers $\mathbb{C}$
or an open interval like $(0,1)$ are not time scales.
Throughout this survey we assume that for $a,b\in\mathbb{T}$,
$a<b$, all intervals are time scales intervals, i.e.,
$[a,b]=[a,b]_{\mathbb{T}}:=[a,b]\cap\mathbb{T}
=\left\{t\in\mathbb{T}: a\leq t\leq b\right\}$.

\begin{definition}[See Section~1.1 of \cite{BohnerDEOTS}]
\label{def:jump:op}
Let $\T$ be a time scale and $t\in\T$.
The forward jump operator $\sigma:\mathbb{T} \rightarrow \mathbb{T}$ is defined by
$\sigma(t):=\inf\left\{ s\in\mathbb{T}: s>t\right\}$ for $t\neq \sup\mathbb{T}$
and $\sigma(\sup\mathbb{T}) := \sup\mathbb{T}$ if $\sup\mathbb{T}<+\infty$.
Accordingly, we define the backward jump operator $\rho:\mathbb{T} \rightarrow \mathbb{T}$
by $\rho(t):=\sup\left\{ s\in\mathbb{T}: s<t\right\}$ for
$t\neq \inf\mathbb{T}$ and $\rho(\inf\mathbb{T}) := \inf\mathbb{T}$ if $\inf\mathbb{T}>-\infty$.
The forward graininess function
$\mu:\mathbb{T} \rightarrow [0,\infty)$ is defined by $\mu(t):=\sigma(t)-t$ and
the backward graininess function $\nu:\mathbb{T} \rightarrow [0,\infty)$
by $\nu(t):=t-\rho(t)$.
\end{definition}

\begin{example}
The two classical time scales are $\mathbb{R}$ and $\mathbb{Z}$,
representing the continuous and the purely discrete time, respectively.
Other standard examples are the periodic numbers,
$h\mathbb{Z}=\left\{ hk: h>0, k\in\Z\right\}$,
and the $q$-scale
$$
\overline{q^{\mathbb{Z}}}
:=q^{\Z}\cup\left\{ 0\right\}=\left\{ q^{k}:q>1, k\in\mathbb{Z}\right\}\cup\left\{ 0\right\}.
$$
Sometimes one considers also the time scale $q^{\mathbb{N}_{0}}=\left\{ q^{k}:q>1, k\in\mathbb{N}_{0}\right\}$.
The following time scale is common:
$\mathbb{P}_{a,b}=\bigcup\limits_{k=0}^{\infty} [k(a+b),k(a+b)+a]$, $a,b>0$.
\end{example}

Table~\ref{tbl:1} and Example~\ref{ex:1} present different forms of jump operators
$\s$ and $\rho$, and graininiess functions $\mu$ and $\nu$, in specified time scales.
\begin{table}[ht]
\begin{center}
\begin{tabular}{|c|c|c|c|}
\hline
$\T$  & $\R$ & $h\Z$ & $\overline{q^{\Z}}$ \\ \hline
$\sigma(t)$ & $t$ & $t+h$ &$qt$   \\ \hline
$\rho(t)$  & $t$ & $t-h$ & $\frac{t}{q}$  \\ \hline
$\mu(t)$ & $0$& $h$ & $t(q-1)$   \\ \hline
$\nu(t)$  & $0$ & $h$ & $\frac{t(q-1)}{q}$ \\ \hline
\end{tabular}
\end{center}
\caption{\label{tbl:1}Examples of jump operators and graininess functions on different time scales.}
\end{table}

\begin{example}[See Example~1.2 of \cite{Wyrwas}]
\label{ex:1}
Let $a,b>0$ and consider the time scale
$$
\mathbb{P}_{a,b}=\bigcup\limits_{k=0}^{\infty} [k(a+b),k(a+b)+a].
$$
Then,
$$
\sigma(t)=
\begin{cases}
t &\text{if } t\in A_{1},
\\
t+b &\text{if } t\in A_{2},
\end{cases}
\quad\quad\quad
\rho(t)=
\begin{cases}
t-b &\text{if } t\in B_{1},
\\
t &\text{if } t\in B_{2}
\end{cases}
$$
\begin{figure}
\centering
\unitlength 1mm
\linethickness{0.2pt}
\ifx\plotpoint\undefined\newsavebox{\plotpoint}\fi
\qquad \qquad \qquad \qquad
\begin{picture}(13,35)(0,-13)
\put(-15,-15){\vector(0,2){30}}
\linethickness{1pt}
\put(-15,-15){\line(1,0){6}}
\put(-6,-15){\line(1,0){6}}
\put(3,-15){\line(1,0){6}}
\put(12,-15){\line(1,0){6}}
\put(16,-15){\vector(1,0){2}}
\put(-15,-17){\makebox(0,0)[cc]{\scriptsize $0$}}
\put(-9,-17){\makebox(0,0)[cc]{\scriptsize $a$}}
\put(17,-17){\makebox(0,0)[cc]{\scriptsize $t$}}
\put(-15.5,-9){\line(1,0){1}}
\put(-15.5,-6){\line(1,0){1}}
\put(-15.5,0){\line(1,0){1}}
\put(-15.5,3){\line(1,0){1}}
\put(-15.5,9){\line(1,0){1}}
\put(-15.5,12){\line(1,0){1}}
\put(-17,-9){\makebox(0,0)[rc]{\scriptsize $a$}}
\put(-17,-6){\makebox(0,0)[rc]{\scriptsize $a+b$}}
\put(-17,0){\makebox(0,0)[rc]{\scriptsize $2a+b$}}
\thicklines
\put(-15,-15){\line(1,1){5.55}}
\put(-6,-6){\line(1,1){5.55}}
\put(3,3){\line(1,1){5.55}}
\put(12,12){\line(1,1){4}}
\put(-9,-9){\circle{1}}
\put(-9,-6){\circle*{1}}
\put(-6,-6){\circle*{1}}
\put(0,0){\circle{1}}
\put(0,3){\circle*{1}}
\put(3,3){\circle*{1}}
\put(9,9){\circle{1}}
\put(9,12){\circle*{1}}
\put(12,12){\circle*{1}}
\put(12,3){\makebox(0,0)[cc]{$\sigma(t)$}}
\end{picture}
\qquad \qquad \qquad \qquad
\qquad \qquad \qquad \qquad
\begin{picture}(13,25)(0,-13)
\put(-15,-15){\vector(0,2){30}}
\linethickness{0.5pt}
\put(-15,-15){\line(1,0){6}}
\put(-6,-15){\line(1,0){6}}
\put(3,-15){\line(1,0){6}}
\put(12,-15){\line(1,0){6}}
\put(16,-15){\vector(1,0){2}}
\put(-15,-17){\makebox(0,0)[cc]{\scriptsize $0$}}
\put(-9,-17){\makebox(0,0)[cc]{\scriptsize $a$}}
\put(17,-17){\makebox(0,0)[cc]{\scriptsize $t$}}
\put(-15.5,-9){\line(1,0){1}}
\put(-15.5,-6){\line(1,0){1}}
\put(-15.5,0){\line(1,0){1}}
\put(-15.5,3){\line(1,0){1}}
\put(-15.5,9){\line(1,0){1}}
\put(-15.5,12){\line(1,0){1}}
\put(-17,-9){\makebox(0,0)[rc]{\scriptsize $a$}}
\put(-17,-6){\makebox(0,0)[rc]{\scriptsize $a+b$}}
\put(-17,0){\makebox(0,0)[rc]{\scriptsize $2a+b$}}
\thicklines
\put(-15,-15){\line(1,1){5.6}}
\put(-5.55,-5.55){\line(1,1){5.6}}
\put(3.45,3.45){\line(1,1){5.6}}
\put(12.45,12.45){\line(1,1){4}}
\put(-9,-9){\circle*{1}}
\put(-6,-9){\circle*{1}}
\put(-6,-6){\circle{1}}
\put(0,0){\circle*{1}}
\put(3,0){\circle*{1}}
\put(3,3){\circle{1}}
\put(9,9){\circle*{1}}
\put(12,9){\circle*{1}}
\put(12,12){\circle{1}}
\put(12,3){\makebox(0,0)[cc]{$\rho(t)$}}
\end{picture}
\newline
\caption{Jump operators of the time scale
$\mathbb{T}=\mathbb{P}_{a,b}$.\label{Fig1}}
\end{figure}
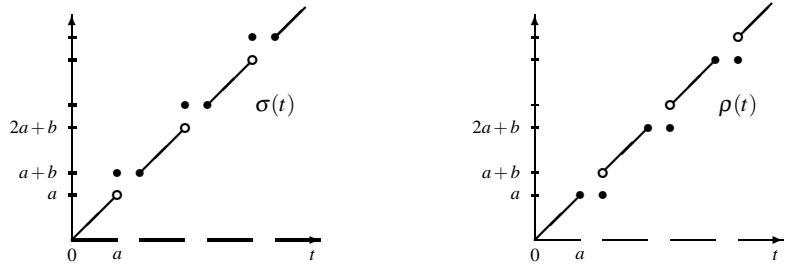
(see Figure~\ref{Fig1}) and
$$
\mu(t)=
\begin{cases}
0 &\text{if } t\in A_{1},
\\
b &\text{if } t\in A_{2},
\end{cases}
\quad\quad\quad
\nu(t)=
\begin{cases}
b &\text{if } t\in B_{1},
\\
0 &\text{if } t\in B_{2},
\end{cases}
$$
where
$$
\bigcup\limits_{k=0}^{\infty} [k(a+b),k(a+b)+a]=A_{1}\cup A_{2}=B_{1}\cup B_{2}
$$
with
\begin{equation*}
A_{1}=\bigcup\limits_{k=0}^{\infty} [k(a+b),k(a+b)+a),
\quad
B_{1}=\bigcup\limits_{k=0}^{\infty} \left\{ k(a+b)\right\},
\end{equation*}
\begin{equation*}
A_{2}=\bigcup\limits_{k=0}^{\infty} \left\{ k(a+b)+a\right\},
\quad
B_{2}=\bigcup\limits_{k=0}^{\infty} (k(a+b),k(a+b)+a].
\end{equation*}
\end{example}

In the time-scale theory the following classification of points is used:
\begin{itemize}
\item
A point $t\in\mathbb{T}$ is called
\emph{right-scattered}
or \emph{left-scattered}
if $\sigma(t)>t$ or $\rho(t)<t$, respectively.
\item
A point $t$ is \emph{isolated} if $\rho(t)<t<\sigma(t)$.
\item
If $t<\sup\T$ and $\sigma(t)=t$, then $t$ is called
\emph{right-dense};
if $t>\inf \T$ and $\rho(t)=t$, then $t$ is called
\emph{left-dense}.
\item We say that $t$ is \emph{dense} if $\rho(t)=t=\sigma(t)$.
\end{itemize}

\begin{definition}[See Section~1 of \cite{MR2028477}]
A time scale $\mathbb{T}$ is said to be an isolated time scale provided given any
$t \in \mathbb{T}$, there is a $\delta > 0$ such that
$(t - \delta, t+\delta) \cap \mathbb{T} = \{t\}$.
\end{definition}

\begin{definition}[See \cite{Bartos1}]
\label{def:regular}
A time scale $\mathbb{T}$ is said to be regular
if the following two conditions
are satisfied simultaneously for all $t\in\mathbb{T}$:
$\sigma(\rho(t))=t$ and $\rho(\sigma(t))=t$.
\end{definition}


\subsection{The delta derivative and the delta integral}
\label{sec:prelim:delta}

If $f:\T\rightarrow \R$, then we define $f^{\sigma}:\T\rightarrow\R$
by $f^{\s}(t):=f(\s(t))$ for all $t\in\T$. The delta derivative
(or \emph{Hilger derivative}) of function $f:\T\lra\R$
is defined for points in the set $\T^{\kappa}$, where
\begin{equation*}
\T^{\kappa} :=
\begin{cases}
\T\setminus\left\{\sup\T\right\}
& \text{ if } \rho(\sup\T)<\sup\T<\infty,\\
\T
& \text{ otherwise. }
\end{cases}
\end{equation*}

Let us define the sets $\T^{\kappa^n}$,  $n\geq 2$, inductively:
$\T^{\kappa^1} :=\T^\kappa$ and
$\T^{\kappa^n} := \left(\T^{\kappa^{n-1}}\right)^\kappa$,
$n\geq 2$. We define delta differentiability in the following way.

\begin{definition}[Section 1.1 of \cite{BohnerDEOTS}]
\label{def:differ:delta}
Let $f:\mathbb{T}\rightarrow\mathbb{R}$ and $t\in\mathbb{T}^{\kappa}$.
We define $f^{\Delta}(t)$ to be the number (provided it exists)
with the property that given any $\varepsilon >0$,
there is a neighborhood $U$ ($U=(t-\delta, t+\delta)\cap\T$ for some $\delta>0$) of $t$ such that
\begin{equation*}
\left|f^{\sigma}(t)-f(s)-f^{\Delta}(t)\left(\sigma(t)-s\right)\right|
\leq \varepsilon \left|\sigma(t)-s\right| \mbox{ for all }  s\in U.
\end{equation*}
A function $f$ is delta differentiable on $\mathbb{T}^{\kappa}$ provided
$f^{\Delta}(t)$ exists for all $t\in\mathbb{T}^{\kappa}$. Then,
$f^{\Delta}:\mathbb{T}^{\kappa}\rightarrow\mathbb{R}$
is called the delta derivative
of $f$ on $\mathbb{T}^{\kappa}$.
\end{definition}

\begin{theorem}[Theorem~1.16 of \cite{BohnerDEOTS}]
\label{th:differ:delta}
Let $f:\mathbb{T} \rightarrow \mathbb{R}$
and $t\in\mathbb{T}^{\kappa}$. The following hold:
\begin{enumerate}
\item
If $f$ is delta differentiable at $t$, then $f$ is continuous at $t$.
\item
If $f$ is continuous at $t$ and $t$ is right-scattered,
then $f$ is delta differentiable at $t$ with
$$
f^{\Delta}(t)=\frac{f^\sigma(t)-f(t)}{\mu(t)}.
$$
\item
If $t$ is right-dense, then $f$ is delta differentiable at $t$
if and only if the limit
$$
\lim\limits_{s\rightarrow t}\frac{f(t)-f(s)}{t-s}
$$
exists as a finite number. In this case,
$$
f^{\Delta}(t)=\lim\limits_{s\rightarrow t}\frac{f(t)-f(s)}{t-s}.
$$
\item
If $f$ is delta differentiable at $t$, then
$$f^\sigma(t)=f(t)+\mu(t)f^{\Delta}(t).$$
\end{enumerate}
\end{theorem}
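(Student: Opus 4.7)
My plan is to unfold the $\varepsilon$-$\delta$ condition of Definition~\ref{def:differ:delta} and specialize it to each assertion. All four items share the same engine: apply the defining inequality at a generic $s$ and, separately, at $s=t$, and then combine. Since item (4) provides a useful identity, I would establish it first and invoke it in the others.

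For (4), setting $s=t$ in the defining inequality gives
\[
|f^{\sigma}(t)-f(t)-f^{\Delta}(t)\,\mu(t)|\le\varepsilon\,\mu(t)
\]
for every $\varepsilon>0$, which forces $f^{\sigma}(t)=f(t)+\mu(t)\,f^{\Delta}(t)$. For (1), I would write the defining inequality at $s$ and at $s=t$ and subtract the two expressions inside the absolute values; the triangle inequality then yields
\[
|f(t)-f(s)-f^{\Delta}(t)(t-s)|\le\varepsilon\bigl(|\sigma(t)-s|+\mu(t)\bigr),
\]
so
\[
|f(s)-f(t)|\le|f^{\Delta}(t)|\,|t-s|+\varepsilon\bigl(|\sigma(t)-s|+\mu(t)\bigr).
\]
Since $|\sigma(t)-s|\le\mu(t)+|t-s|$, a standard calibration of $\varepsilon$ and of the neighborhood radius delivers $f(s)\to f(t)$ as $s\to t$ in $\T$.

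For (3), note that $t$ right-dense means $\sigma(t)=t$ and $\mu(t)=0$, so the defining inequality reduces, for $s\ne t$, to
\[
\left|\frac{f(t)-f(s)}{t-s}-f^{\Delta}(t)\right|\le\varepsilon,
\]
which is exactly the $\varepsilon$-$\delta$ statement that the ordinary limit $\lim_{s\to t}(f(t)-f(s))/(t-s)$ exists and equals $f^{\Delta}(t)$; both implications then follow immediately.

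Item (2) is the step I expect to be the main obstacle, since it must actually produce $f^{\Delta}(t)$ rather than merely manipulate it. Assume $f$ is continuous at $t$ and $t$ is right-scattered, and set $\alpha:=(f^{\sigma}(t)-f(t))/\mu(t)$. A direct computation using $\sigma(t)=t+\mu(t)$ and $f^{\sigma}(t)=f(t)+\alpha\,\mu(t)$ gives the identity
\[
f^{\sigma}(t)-f(s)-\alpha(\sigma(t)-s)=\bigl(f(t)-f(s)\bigr)+\alpha(s-t).
\]
I would then choose the neighborhood $U=(t-\delta,t+\delta)\cap\T$ with $\delta<\mu(t)$; because the interval $(t,\sigma(t))$ contains no point of $\T$, each $s\in U$ satisfies either $s=t$ or $s<t$, so $|\sigma(t)-s|\ge\mu(t)$ in either case. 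The delicate point is to shrink $\delta$ further so that continuity gives $|f(s)-f(t)|<\tfrac{\varepsilon}{2}\mu(t)$ while simultaneously $|\alpha|\,|s-t|<\tfrac{\varepsilon}{2}\mu(t)$; once done, the identity yields $|f^{\sigma}(t)-f(s)-\alpha(\sigma(t)-s)|\le\varepsilon\,\mu(t)\le\varepsilon\,|\sigma(t)-s|$, proving $f^{\Delta}(t)=\alpha$.
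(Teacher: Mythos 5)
Your proof is correct and is essentially the standard argument for this classical result; the paper itself offers no proof, simply citing Theorem~1.16 of Bohner and Peterson, and your reconstruction matches that reference's proof in all four items. The only cosmetic difference is in item (2), where the textbook deduces the defining inequality from continuity of the quotient $s\mapsto\bigl(f^{\sigma}(t)-f(s)\bigr)/\bigl(\sigma(t)-s\bigr)$ at $s=t$, whereas you make the same estimate explicit via the identity $f^{\sigma}(t)-f(s)-\alpha(\sigma(t)-s)=\bigl(f(t)-f(s)\bigr)+\alpha(s-t)$ together with the lower bound $|\sigma(t)-s|\ge\mu(t)$ on a neighborhood of radius less than $\mu(t)$; both routes are equivalent and your calibration of $\delta$ closes the argument.
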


The next example is a consequence of Theorem~\ref{th:differ:delta}
and presents different forms of the delta derivative on specific time scales.

\begin{example}
Let $\T$ be a time scale.
\begin{enumerate}
\item
If $\mathbb{T}=\mathbb{R}$, then $f:\mathbb{R} \rightarrow \mathbb{R}$
is delta differentiable at $t\in\mathbb{R}$ if and only if
\begin{equation*}
f^\Delta(t)=\lim\limits_{s\rightarrow t}\frac{f(t)-f(s)}{t-s}
\end{equation*}
exists, i.e., if and only if $f$ is differentiable
(in the ordinary sense) at $t$ and in this case we have
$f^{\Delta}(t)=f'(t)$.
\item
If $\mathbb{T}=h\mathbb{Z}$, $h > 0$, then
$f:h\mathbb{Z} \rightarrow \mathbb{R}$ is delta differentiable
at $t\in h\mathbb{Z}$ with
\begin{equation*}
f^{\Delta}(t)=\frac{f(\sigma(t))-f(t)}{\mu(t)}=\frac{f(t+h)-f(t)}{h}=:\Delta_{h}f(t).
\end{equation*}
In the particular case $h=1$ we have $f^{\Delta}(t)=\Delta f(t)$,
where $\Delta$ is the usual forward difference operator.
\item
If $\mathbb{T}=\overline{q^{\mathbb{Z}}}$, $q>1$,
then for a delta differentiable function $f:\overline{q^{\mathbb{Z}}}\lra\R$ we have
\begin{equation*}
f^{\Delta}(t)=\frac{f(\sigma(t))-f(t)}{\mu(t)}=\frac{f(qt)-f(t)}{(q-1)t}=:\Delta_{q}f(t)
\end{equation*}
for all $t\in\overline{q^{\mathbb{Z}}}\setminus \left\{ 0\right\}$,
i.e., we get the usual Jackson derivative of quantum calculus \cite{QC,MyID:266}.
\end{enumerate}
\end{example}

Now we formulate some basic properties of the delta derivative on time scales.

\begin{theorem}[Theorem~1.20 of \cite{BohnerDEOTS}]
\label{th:differ:prop:delta}
Let $f,g:\mathbb{T} \rightarrow \mathbb{R}$
be delta differentiable at $t\in\mathbb{T^{\kappa}}$. Then,
\begin{enumerate}
\item the sum $f+g:\mathbb{T} \rightarrow \mathbb{R}$ is delta differentiable at $t$ with
\begin{equation*}
(f+g)^{\Delta}(t)=f^{\Delta}(t)+g^{\Delta}(t);
\end{equation*}
\item for any constant $\alpha$, $\alpha f:\mathbb{T}\rightarrow\mathbb{R}$
is delta differentiable at $t$ with
\begin{equation*}
(\alpha f)^{\Delta}(t)=\alpha f^{\Delta}(t);
\end{equation*}
\item the product $fg:\mathbb{T}\rightarrow\mathbb{R}$
is delta differentiable at $t$ with
\begin{equation*}
(fg)^{\Delta}(t)=f^{\Delta}(t)g(t)+f^{\sigma}(t)g^{\Delta}(t)
=f(t)g^{\Delta}(t)+f^{\Delta}(t)g^{\sigma}(t);
\end{equation*}
\item if $g(t)g^{\sigma}(t)\neq 0$, then $f/g$ is delta differentiable at $t$ with
\begin{equation*}
\left(\frac{f}{g}\right)^{\Delta}(t)
=\frac{f^{\Delta}(t)g(t)-f(t)g^{\Delta}(t)}{g(t)g^{\sigma}(t)}.
\end{equation*}
\end{enumerate}
\end{theorem}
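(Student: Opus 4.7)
The plan is to derive all four identities directly from Definition~\ref{def:differ:delta}, using the continuity part of Theorem~\ref{th:differ:delta} as the only auxiliary tool to convert values of $f,g$ at $s$ into values at $t$.

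For the sum, given $\varepsilon>0$, I would apply the definition to $f$ and $g$ separately with tolerance $\varepsilon/2$ on neighborhoods $U_1,U_2$ of $t$; on $U_1\cap U_2$ the triangle inequality delivers the required estimate for $f+g$. The scalar multiple is essentially identical: apply the definition to $f$ with tolerance $\varepsilon/(|\alpha|+1)$ (the case $\alpha=0$ being trivial).

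For the product, the key algebraic step is the decomposition
\begin{equation*}
(fg)^{\sigma}(t)-(fg)(s)
= f^{\sigma}(t)\bigl[g^{\sigma}(t)-g(s)\bigr]
+ g(s)\bigl[f^{\sigma}(t)-f(s)\bigr].
\end{equation*}
Substituting the $\varepsilon$-estimates for the two brackets from the definition of $f^{\Delta}(t)$ and $g^{\Delta}(t)$ identifies $f^{\Delta}(t)g(s)+f^{\sigma}(t)g^{\Delta}(t)$ as the candidate derivative, with a remainder proportional to $\varepsilon|\sigma(t)-s|$ whose constant involves $|f^{\sigma}(t)|$ and a local bound on $|g(s)|$. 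Continuity of $g$ at $t$, given by Theorem~\ref{th:differ:delta}(1), provides such a bound on a suitable neighborhood and also lets me replace $g(s)$ by $g(t)$ inside the main term at the cost of an extra $O(\varepsilon)|\sigma(t)-s|$, yielding the first formula. Rearranging the same decomposition symmetrically, namely
\begin{equation*}
(fg)^{\sigma}(t)-(fg)(s)
= f(s)\bigl[g^{\sigma}(t)-g(s)\bigr]
+ g^{\sigma}(t)\bigl[f^{\sigma}(t)-f(s)\bigr],
\end{equation*}
and invoking continuity of $f$ at $t$ delivers the equivalent form $f(t)g^{\Delta}(t)+f^{\Delta}(t)g^{\sigma}(t)$.

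For the quotient, the hypothesis $g(t)g^{\sigma}(t)\neq 0$ guarantees that both $g$ and $g^{\sigma}$ are nonzero on a neighborhood of $t$, so $1/g$ is defined and continuous there. I would first apply the product rule to $g\cdot(1/g)=1$, whose delta derivative is zero, and solve for the reciprocal rule
\begin{equation*}
\left(\frac{1}{g}\right)^{\Delta}(t)
=-\frac{g^{\Delta}(t)}{g(t)\,g^{\sigma}(t)};
\end{equation*}
the quotient formula then follows by applying the product rule to $f\cdot(1/g)$ and clearing denominators.

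The principal obstacle is the product rule, specifically the bookkeeping that replaces $g(s)$ by $g(t)$ while keeping all remainders in the canonical form $\tilde\varepsilon\,|\sigma(t)-s|$. This forces me to shrink the neighborhood of $t$ so that the two delta-differentiability estimates and the continuity estimate for $g$ all hold simultaneously, and to absorb the prefactors $|f^{\sigma}(t)|$ and $|g(t)|$ into a single constant before choosing $\varepsilon$; once this is done, parts~1, 2 and 4 reduce cleanly to the product rule and straightforward linearity arguments.
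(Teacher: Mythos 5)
The paper gives no proof of this statement; it is quoted verbatim from Theorem~1.20 of Bohner and Peterson's book, and your argument is essentially the standard proof given there: $\varepsilon/2$-splitting for the sum, the decomposition $(fg)^{\sigma}(t)-(fg)(s)=f^{\sigma}(t)[g^{\sigma}(t)-g(s)]+g(s)[f^{\sigma}(t)-f(s)]$ (and its symmetric counterpart) together with continuity of the factors for the product, and reduction of the quotient to the product rule via the reciprocal. The estimates you describe are correct and the bookkeeping you flag as the main obstacle works exactly as you outline.

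One small point deserves explicit attention in the quotient step: applying the product rule to $g\cdot(1/g)=1$ and ``solving'' for $(1/g)^{\Delta}(t)$ presupposes that $1/g$ is delta differentiable at $t$, which continuity of $1/g$ alone does not give at a right-dense point. You should either verify the definition directly for $1/g$ (the estimate $\bigl|(1/g)^{\sigma}(t)-(1/g)(s)+\tfrac{g^{\Delta}(t)}{g(t)g^{\sigma}(t)}(\sigma(t)-s)\bigr|\le\tilde\varepsilon|\sigma(t)-s|$ follows from the one for $g$ after dividing by $g(s)g^{\sigma}(t)$, which is bounded away from zero near $t$), or split into the right-scattered case (where continuity suffices by Theorem~\ref{th:differ:delta}) and the right-dense case (where the classical limit argument applies). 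With that justification added, the proof is complete.
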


Now we introduce the theory of delta integration on time scales.
We start by defining the associated class of functions.

\begin{definition}[Section~1.4 of \cite{MBbook2003}]
A function $f:\mathbb{T}\rightarrow \mathbb{R}$ is called rd-continuous
provided it is continuous at right-dense points in $\mathbb{T}$ and its
left-sided limits exist (finite) at all left-dense points in $\mathbb{T}$.
\end{definition}
The set of all rd-continuous functions
$f:\mathbb{T} \rightarrow \mathbb{R}$ is denoted by
$C_{rd}=C_{rd}(\mathbb{T})=C_{rd}(\mathbb{T},\mathbb{R})$.
The set of functions $f:\mathbb{T}\rightarrow \mathbb{R}$
that are delta differentiable and whose derivative
is rd-continuous is denoted by
$C^{1}_{rd}=C_{rd}^{1}(\mathbb{T})=C^{1}_{rd}(\mathbb{T},\mathbb{R})$.

\begin{definition}[Definition~1.71 of \cite{BohnerDEOTS}]
A function $F:\mathbb{T} \rightarrow \mathbb{R}$ is called
a delta antiderivative
of $f:\mathbb{T} \rightarrow \mathbb{R}$ provided
$F^{\Delta}(t)=f(t)$ for all $t\in\mathbb{T}^{\kappa}$.
\end{definition}

\begin{definition}
Let $\mathbb{T}$ be a time scale and $a,b\in\mathbb{T}$.
If $f:\mathbb{T}^{\kappa} \rightarrow \mathbb{R}$ is a rd-continuous
function and $F:\mathbb{T} \rightarrow \mathbb{R}$
is an antiderivative of $f$, then the Cauchy delta integral
is defined by
\begin{equation*}
\int\limits_{a}^{b} f(t)\Delta t := F(b)-F(a).
\end{equation*}
\end{definition}

\begin{theorem}[Theorem~1.74 of \cite{BohnerDEOTS}]
Every rd-continuous function $f$ has an antiderivative $F$.
In particular, if $t_{0}\in\mathbb{T}$, then $F$ defined by
\begin{equation*}
F(t):=\int\limits_{t_{0}}^{t} f(\tau)\Delta \tau, \quad t\in\mathbb{T},
\end{equation*}
is an antiderivative of $f$.
\end{theorem}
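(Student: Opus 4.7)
The plan is to split the assertion into two tasks: (i) prove that every rd-continuous $f$ admits at least one antiderivative on $\T^{\kappa}$, and (ii) verify that the specific candidate $F(t):=\int_{t_{0}}^{t} f(\tau)\Delta\tau$ is such an antiderivative. Task (ii) is essentially automatic from the way the Cauchy delta integral has just been defined: given any antiderivative $G$ furnished by (i), the candidate $F$ equals $G(t)-G(t_{0})$ and therefore has delta derivative $f$ on $\T^{\kappa}$. The potential dependence on the choice of $G$ is harmless because two antiderivatives of the same function must differ by a constant, a standard time-scale fact that follows by applying Theorem~\ref{th:differ:delta}(2) at right-scattered points and Theorem~\ref{th:differ:delta}(3) on right-dense intervals to their difference.

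The substance of the proof therefore lies in Task (i). First I would establish that rd-continuity implies that $f$ is \emph{regulated}, i.e.\ admits one-sided limits along $\T$ at every point: the right limit at right-dense points is supplied by continuity there, the left limit at left-dense points is built into the definition of rd-continuity, and the remaining one-sided limits are vacuous at scattered points. For a regulated $f$ I would then build a continuous \emph{pre-antiderivative} $F$, by which I mean a function whose delta derivative coincides with $f$ outside some at most countable exceptional subset of right-scattered points. Fixing $F(t_{0})=0$, the construction proceeds via the time-scale induction principle: across each right-scattered point $t$, Theorem~\ref{th:differ:delta}(4) forces the jump $F^{\sigma}(t)=F(t)+\mu(t)f(t)$; on every maximal right-dense subinterval, which is a genuine real interval on which $f$ is continuous, one defines $F$ as the ordinary Riemann integral of $f$; a symmetric extension covers the leftward direction from $t_{0}$.

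The main obstacle is to upgrade this pre-antiderivative into a \emph{true} antiderivative by eliminating the exceptional set, and this is precisely where rd-continuity (rather than mere regulatedness) becomes indispensable. The delicate case occurs at a right-dense point $t$, where one must verify
\begin{equation*}
\lim_{s\to t}\frac{F(t)-F(s)}{t-s}=f(t)
\end{equation*}
in order to invoke Theorem~\ref{th:differ:delta}(3); the continuity of $f$ at right-dense points, which is exactly the defining property of rd-continuous functions, supplies the uniform estimate needed to pass the limit through the construction of $F$ on the surrounding right-dense subinterval. Once $F^{\Delta}=f$ has been checked at every $t\in\T^{\kappa}$, Task (i) is complete and, by the argument in the first paragraph, so is the theorem.
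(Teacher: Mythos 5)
You have chosen the standard route --- the one followed in Bohner and Peterson's book, which is all the paper itself offers here, since the statement is quoted with a citation and no proof --- namely: rd-continuity implies regulatedness, regulated functions admit a pre-antiderivative $F$ (delta differentiable off a countable exceptional set $\T^{\kappa}\setminus D$ containing no right-scattered points), and rd-continuity then upgrades $F$ to a true antiderivative at the exceptional right-dense points. The overall decomposition, and the reduction of the ``in particular'' clause to well-definedness of the Cauchy integral, are sound.

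The gap is in your construction of the pre-antiderivative. You describe the time scale as decomposing into right-scattered points, across which the jump $F^{\sigma}(t)=F(t)+\mu(t)f(t)$ is forced, and ``maximal right-dense subintervals, which are genuine real intervals'' on which $F$ is an ordinary Riemann integral. A general time scale admits no such decomposition: for $\T$ the Cantor set (explicitly listed as a time scale in Section~\ref{preliminaries}) the set of dense points is uncountable, yet $\T$ contains no nontrivial real interval, so your recipe defines $F$ essentially nowhere beyond $t_{0}$. The same structural issue undermines your final step: at a right-dense point $t$ of such a time scale there is no ``surrounding right-dense subinterval'', and the difference $F(t)-F(s)$ aggregates infinitely many jump contributions interleaved with dense parts. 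What is actually needed is the mean value inequality for pre-differentiable functions, of the form $\left|F(t)-F(s)-f(t)(t-s)\right|\le \sup_{\tau\in[s,t]\cap D}\left|f(\tau)-f(t)\right|\,|t-s|$, combined with continuity of $f$ at the right-dense point $t$; and the existence of the pre-antiderivative itself requires the full induction-principle argument (local existence near each point, glued together by that same mean value estimate), not a jumps-plus-Riemann-integral formula. The same remark applies to your claim that two antiderivatives differ by a constant: pointwise vanishing of the delta derivative does not yield constancy by inspecting scattered points and real intervals separately; it again requires the time-scale mean value theorem or the induction principle.
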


\begin{theorem}[Theorem~1.75 of \cite{BohnerDEOTS}]
\label{th:int:lim:sigma}
If $f\in C_{rd}$, then
$\displaystyle \int\limits_{t}^{\sigma(t)}f(\tau)\Delta \tau=\mu(t)f(t)$,
$t\in \mathbb{T}^{\kappa}$.
\end{theorem}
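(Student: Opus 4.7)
The plan is to reduce the identity to the basic relation $F^\sigma(t) = F(t) + \mu(t) F^\Delta(t)$ from Theorem~\ref{th:differ:delta}(4), using an antiderivative $F$ of $f$. Because the hypothesis $f \in C_{rd}$ is exactly what the existence theorem quoted just above needs, we may pick a delta antiderivative $F$ of $f$ on $\mathbb{T}$, so that $F^\Delta(\tau) = f(\tau)$ for every $\tau \in \mathbb{T}^\kappa$. By the very definition of the Cauchy delta integral,
\[
\int_{t}^{\sigma(t)} f(\tau)\,\Delta\tau = F(\sigma(t)) - F(t) = F^\sigma(t) - F(t).
\]

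Next I would apply item~(4) of Theorem~\ref{th:differ:delta} to $F$ at the point $t \in \mathbb{T}^\kappa$, which yields
\[
F^\sigma(t) = F(t) + \mu(t) F^\Delta(t).
\]
Substituting into the previous display gives $F^\sigma(t) - F(t) = \mu(t) F^\Delta(t) = \mu(t) f(t)$, which is the desired identity. So the entire argument is essentially a two-line chain: definition of the integral, then the jump formula for a delta-differentiable function.

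There is really no obstacle, but one should be mindful that Theorem~\ref{th:differ:delta}(4) is valid at any $t \in \mathbb{T}^\kappa$, covering both the right-scattered case (where $\mu(t) > 0$ and both sides equal $\mu(t)f(t) \neq 0$ in general) and the right-dense case (where $\sigma(t) = t$, so that $\mu(t) = 0$ and both sides vanish trivially). Thus no case split is strictly needed; the unified formula handles both simultaneously. The only subtle point worth flagging is that the argument relies on the existence of an antiderivative $F$, which is guaranteed precisely by the immediately preceding existence theorem for rd-continuous functions — this is where the hypothesis $f \in C_{rd}$ is used.
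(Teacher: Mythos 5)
Your argument is correct and is precisely the standard proof of this identity (the paper itself only cites Theorem~1.75 of the Bohner--Peterson book, where the same two-step reasoning is used): existence of an antiderivative $F$ from rd-continuity, the definition of the Cauchy delta integral, and the jump formula $F^{\sigma}(t)=F(t)+\mu(t)F^{\Delta}(t)$ from item~4 of Theorem~\ref{th:differ:delta}. Your remark that the formula needs no case split between right-dense and right-scattered points is also accurate.
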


Let us see two special cases of the delta integral.

\begin{example}
\label{ex:int:R:hZ:qN}
Let $a,b\in\mathbb{T}$ and $f:\mathbb{T} \rightarrow \mathbb{R}$ be rd-continuous.
\begin{enumerate}
\item
If $\mathbb{T}=\mathbb{R}$, then
\begin{equation*}
\int\limits_{a}^{b}f(t)\Delta t=\int\limits_{a}^{b}f(t)dt,
\end{equation*}
where the integral on the right hand side is the usual Riemann integral.
\item
If $[a,b]$ consists of only isolated points, then
\begin{equation*}
\int\limits_{a}^{b} f(t)\Delta t=
\begin{cases}
\sum\limits_{t\in[a,b)}\mu(t)f(t), & \hbox{ if } a<b, \\
0, & \hbox{ if } a=b,\\
-\sum\limits_{t\in[b,a)}\mu(t)f(t), & \hbox{ if } a>b.
\end{cases}
\end{equation*}
\end{enumerate}
\end{example}

Now we present some useful properties of the delta integral.

\begin{theorem}[Theorem~1.77 of \cite{BohnerDEOTS}]
\label{th:int:prop:delta}
If $a,b,c\in\mathbb{T}$, $a < c < b$,
$\alpha\in\mathbb{R}$, and $f,g \in C_{rd}(\mathbb{T}, \mathbb{R})$, then:
\begin{enumerate}
\item
$\int\limits_{a}^{b}(f(t)+g(t))\Delta t
=\int\limits_{a}^{b} f(t)\Delta t+\int\limits_{a}^{b}g(t)\Delta t$,
\item
$\int\limits_{a}^{b}\alpha f(t)\Delta t=\alpha \int\limits_{a}^{b} f(t)\Delta t$,
\item
$\int\limits_{a}^{b}f(t)\Delta t
=-\int\limits_{b}^{a}f(t)\Delta t$,
\item
$\int\limits_{a}^{b} f(t)\Delta t
=\int\limits_{a}^{c} f(t)\Delta t
+\int\limits_{c}^{b} f(t)\Delta t$,
\item
$\int\limits_{a}^{a} f(t)\Delta t=0$,
\item
$\int\limits_{a}^{b}f(t)g^{\Delta}(t)\Delta t
=\left.f(t)g(t)\right|^{t=b}_{t=a}
-\int\limits_{a}^{b} f^{\Delta}(t)g^\sigma(t)\Delta t$,
\item
$\int\limits_{a}^{b} f^\sigma(t) g^{\Delta}(t)\Delta t
=\left.f(t)g(t)\right|^{t=b}_{t=a}
-\int\limits_{a}^{b}f^{\Delta}(t)g(t)\Delta t$.
\end{enumerate}
\end{theorem}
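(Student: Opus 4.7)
The plan is to reduce everything to the definition of the Cauchy delta integral in terms of an antiderivative, combined with the differentiation rules of Theorem~\ref{th:differ:prop:delta}. Existence of rd-continuous antiderivatives for rd-continuous integrands is supplied by the preceding existence theorem, so I may freely speak of antiderivatives $F$ and $G$ of $f$ and $g$, respectively.

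For items (1)--(5), the argument is essentially a one-line algebraic identity per item. Since delta differentiation is linear (items (1) and (2) of Theorem~\ref{th:differ:prop:delta}), $F+G$ and $\alpha F$ are antiderivatives of $f+g$ and $\alpha f$; evaluating at the endpoints and subtracting yields (1) and (2). Items (3) and (5) are immediate from $F(b)-F(a) = -(F(a)-F(b))$ and $F(a)-F(a)=0$. For (4), I insert $-F(c)+F(c)$ into the telescoping difference $F(b)-F(a)$ to split the Cauchy integral at $c$.

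The substantive cases are the integration-by-parts formulas (6) and (7). I would invoke the two forms of the product rule in item (3) of Theorem~\ref{th:differ:prop:delta}. Writing $(fg)^\Delta = f^\Delta g + f^\sigma g^\Delta$, the function $fg$ serves as an antiderivative of the right-hand side, so the definition of the delta integral, together with the linearity already established in (1), gives
\begin{equation*}
\left. f(t)g(t)\right|_{t=a}^{t=b} = \int_a^b f^\Delta(t) g(t) \Delta t + \int_a^b f^\sigma(t) g^\Delta(t) \Delta t ,
\end{equation*}
and rearranging yields item (7). Starting instead from $(fg)^\Delta = f g^\Delta + f^\Delta g^\sigma$ produces item (6) in exactly the same way.

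The main point to be careful about is an integrability subtlety: for the integration-by-parts identities to follow cleanly from the definition, the integrands $f^\Delta g$, $f^\sigma g^\Delta$, $f g^\Delta$, and $f^\Delta g^\sigma$ must be rd-continuous, and $fg$ must be delta differentiable. Thus in (6) and (7) the functions $f$ and $g$ are tacitly taken in $C_{rd}^1$ rather than merely $C_{rd}$; closure of $C_{rd}$ under pointwise products and under composition with $\sigma$ (which follows from the definition of rd-continuity together with the fact that $\sigma$ carries right-dense points to themselves) then takes care of the remaining verifications.
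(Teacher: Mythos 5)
Your proof is correct and follows the standard argument for this textbook result (the survey itself cites Theorem~1.77 of Bohner--Peterson without proof): reduce items (1)--(5) to the antiderivative definition of the Cauchy delta integral plus linearity of delta differentiation, and derive (6)--(7) from the two forms of the product rule. Your remark that $f,g$ must tacitly lie in $C^1_{rd}$ for the integration-by-parts identities, with rd-continuity of $f^\sigma$, $g^\sigma$ and of products ensuring integrability, is exactly the right point of care.
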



\subsection{The nabla derivative and the nabla integral}
\label{sec:prelim:nabla}

The nabla calculus is similar to the delta one of Section~\ref{sec:prelim:delta}.
The difference is that the backward jump operator $\rho$
takes the role of the forward jump operator $\s$.
For a function $f:\T\lra\R$ we define $f^{\rho}:\T\lra\R$ by $f^{\rho}(t):=f(\rho(t))$.
If $\mathbb{T}$ has a right-scattered minimum $m$, then
we define $\mathbb{T}_{\kappa}:=\mathbb{T}-\left\{ m\right\}$;
otherwise, we set $\mathbb{T}_{\kappa}:=\mathbb{T}$:
\begin{equation*}
\T_{\kappa} :=
\begin{cases}
\T\setminus\left\{\inf\T\right\}
& \text{ if } -\infty<\inf\T<\s(\inf\T),\\
\T
& \text{ otherwise}.
\end{cases}
\end{equation*}

Let us define the sets $\T_{\kappa}$, $n\geq 2$, inductively: $\mathbb{T}_{\kappa^1} := \mathbb{T}_\kappa$ and
$\mathbb{T}_{\kappa^n} := (\mathbb{T}_{\kappa^{n-1}})_\kappa$, $n\geq 2$.
Finally, we define $\mathbb{T}_{\kappa}^{\kappa} := \mathbb{T}_{\kappa} \cap \mathbb{T}^{\kappa}$.
The definition of nabla derivative of a function $f:\T\lra\R$ at point $t\in\T_{\kappa}$ is similar
to the delta case (cf. Definition~\ref{def:differ:delta}).

\begin{definition}[Section 3.1 of \cite{MBbook2003}]
We say that a function $f:\mathbb{T} \rightarrow \mathbb{R}$ is nabla differentiable
at $t\in\mathbb{T}_{\kappa}$ if there is a number $f^{\nabla}(t)$ such that for all
$\varepsilon >0$ there exists a neighborhood $U$ of $t$
(i.e., $U=(t-\delta, t+\delta)\cap\mathbb{T}$ for some $\delta>0$) such that
\begin{equation*}
|f^{\rho}(t)-f(s)-f^{\nabla}(t)(\rho(t)-s)| \leq
\varepsilon |\rho(t)-s| \, \mbox{ for all } s\in U.
\end{equation*}
We say that $f^{\nabla}(t)$ is the nabla derivative of $f$ at $t$.
Moreover, $f$ is said to be nabla differentiable on $\mathbb{T}$ provided $f^{\nabla}(t)$
exists for all $t\in\mathbb{T}_{\kappa}$.
\end{definition}

The main properties of the nabla derivative
are similar to those given in Theorems~\ref{th:differ:delta}
and \ref{th:differ:prop:delta}, and can be found, respectively,
in Theorems~8.39 and 8.41 of \cite{BohnerDEOTS}.

\begin{example}
If $\T=\R$, then $f^{\n}(t)=f'(t)$.
If $\T=h\Z$, $h>0$, then
$$
f^{\n}(t)=\frac{f(t)-f(t-h)}{h} =: \n_{h} f(t).
$$
For $h=1$ the operator $\n_{h}$ reduces to the standard backward difference operator
$\nabla f(t) = f(t) - f(t-1)$.
\end{example}

We now briefly recall the theory of nabla integration on time scales.
Similarly as in the delta case, first we define a suitable class of functions.

\begin{definition}[Section~3.1 of \cite{MBbook2003}]
Let $\mathbb{T}$ be a time scale and $f:\mathbb{T}\rightarrow \mathbb{R}$.
We say that $f$ is ld-continuous if it is continuous at left-dense
points $t\in\T$ and its right-sided limits exist (finite) at all right-dense points.
\end{definition}

\begin{remark}
If $\T=\R$, then $f$ is ld-continuous if and only if $f$ is continuous.
If $\T=\Z$, then any function is ld-continuous.
\end{remark}

The set of all ld-continuous functions $f:\mathbb{T}\rightarrow \mathbb{R}$
is denoted by $C_{ld}=C_{ld}(\mathbb{T})=C_{ld}(\mathbb{T},\mathbb{R})$;
the set of all nabla differentiable functions with ld-continuous derivative by
$C^{1}_{ld}=C^{1}_{ld}(\mathbb{T})=C^{1}_{ld}(\mathbb{T},\mathbb{R})$.
Follows the definition of nabla integral on time scales.

\begin{definition}[Definition~8.42 of \cite{BohnerDEOTS}]
A function $F : \mathbb{T} \rightarrow \mathbb{R}$
is called a nabla antiderivative
of $f : \mathbb{T} \rightarrow \mathbb{R}$
provided $F^\nabla(t) = f(t)$ for all $t \in \mathbb{T}_\kappa$.
In this case we define the nabla integral of $f$ from $a$ to $b$
($a, b \in \mathbb{T}$) by
$$
\int_a^b f(t) \nabla t := F(b) - F(a).
$$
\end{definition}

\begin{theorem}[Theorem~8.45 of \cite{BohnerDEOTS}
or Theorem~11 of \cite{TorresDeltaNabla}]
Every ld-continuous function $f$ has a nabla antiderivative $F$.
In particular, if $a \in \mathbb{T}$, then $F$ defined by
$$
F(t) = \int_a^t f(\tau) \nabla \tau, \quad t \in \mathbb{T},
$$
is a nabla antiderivative of $f$.
\end{theorem}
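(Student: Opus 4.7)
The plan is to mirror the proof of the delta analogue (Theorem~1.74 of \cite{BohnerDEOTS}), exploiting the symmetry between the forward and backward time-scale calculi by swapping the roles of $\sigma/\rho$ and $\mu/\nu$. The second assertion (the ``In particular'' part) actually reduces to the first assertion almost immediately once existence is in hand, so the real content is to show that \emph{some} nabla antiderivative exists.

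First, I would observe that ld-continuity of $f$ supplies all the regularity needed: at every left-dense $t$ the function is continuous from the left, and at every right-dense point the right-sided limit exists and is finite. This makes $f$ ``left-regulated'' in the appropriate sense. Second, I would construct a nabla pre-antiderivative $F_0:\mathbb{T}\to\mathbb{R}$ piecewise. At any left-scattered point $t\in\mathbb{T}_{\kappa}$ the nabla analogue of Theorem~\ref{th:differ:delta}(2) forces
\[
F_0(t) = F_0(\rho(t)) + \nu(t)\, f(t),
\]
so the value there is prescribed once $F_0(\rho(t))$ is known. On the connected components of the left-dense part of $\mathbb{T}$ (which are genuine real intervals, possibly degenerate), I would define $F_0$ by the classical Riemann integral of the restriction of $f$; ld-continuity guarantees continuity there, so this is well posed. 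Gluing these two constructions, starting from the prescribed value $F_0(a)=0$ and propagating through $\mathbb{T}$, gives a single function on all of $\mathbb{T}$.

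Third, I would verify $F_0^{\nabla}(t)=f(t)$ at every $t\in\mathbb{T}_{\kappa}$. For left-scattered $t$ this is the jump rule by construction, using the nabla analogue of Theorem~\ref{th:differ:delta}(2). For left-dense $t$, the nabla analogue of Theorem~\ref{th:differ:delta}(3) turns the check into the ordinary one-sided differentiability of the Riemann antiderivative of a function continuous at $t$, which is classical. Finally, for the ``In particular'' statement, let $G$ be any nabla antiderivative of $f$ (existence just established). By the definition of the Cauchy nabla integral,
\[
F(t) \;=\; \int_{a}^{t} f(\tau)\,\nabla\tau \;=\; G(t)-G(a),
\]
so $F^{\nabla}(t)=G^{\nabla}(t)=f(t)$ for all $t\in\mathbb{T}_{\kappa}$, proving that the specific $F$ displayed in the statement is itself a nabla antiderivative of $f$.

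The main obstacle is the gluing in the construction of $F_0$: an arbitrary time scale can interleave isolated, left-scattered, right-scattered and dense points in intricate ways, and one must check that the jump rule at left-scattered points is compatible, across each component, with the Riemann antiderivative chosen on the left-dense pieces. The reason ld-continuity is exactly the right hypothesis is that it provides enough regularity on the dense regions to produce a continuous classical antiderivative there, while imposing no additional constraint on the purely scattered parts beyond what the jump rule itself already handles.
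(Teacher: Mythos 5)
The paper itself offers no proof of this statement: it is quoted verbatim from Theorem~8.45 of \cite{BohnerDEOTS} (equivalently, it is the nabla mirror of Theorem~1.74 there), so the only comparison available is with the argument in that reference. Your reduction of the ``In particular'' clause to the existence claim is fine: once some nabla antiderivative $G$ exists, the Cauchy nabla integral gives $F(t)=G(t)-G(a)$, which differs from $G$ by a constant and hence satisfies $F^{\nabla}=f$ on $\mathbb{T}_{\kappa}$. The problem is with the existence claim itself.

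The genuine gap is in your second step. You assert that the left-dense part of $\mathbb{T}$ decomposes into connected components that are ``genuine real intervals,'' on which one can take a classical Riemann antiderivative, and that the remaining left-scattered points can then be handled by propagating the jump rule $F_0(t)=F_0(\rho(t))+\nu(t)f(t)$ outward from $a$. Neither claim survives a general time scale. The set of left-dense points need not contain any interval (take $\mathbb{T}$ to be the Cantor set: almost all points are left- or right-dense, yet $\mathbb{T}$ has empty interior, so there is no piece on which a classical Riemann integral of ``the restriction of $f$'' is even defined). Dually, the left-scattered points can accumulate at dense points (e.g.\ $\mathbb{T}=\{0\}\cup\{1/n : n\in\mathbb{N}\}$), so the ``propagation'' of the jump rule is not a step-by-step process and its consistency with whatever you do on the dense part is exactly the content you would need to prove. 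In \cite{BohnerDEOTS} this difficulty is resolved by first establishing that every regulated function admits a \emph{pre-antiderivative}, a result whose proof rests on the induction principle for time scales, and then upgrading pre-antiderivatives to antiderivatives under rd-(resp.\ ld-)continuity; your sketch silently assumes precisely the conclusion of that lemma. A cleaner route, if you want to avoid redoing that work, is to invoke the delta statement (Theorem~1.74 of \cite{BohnerDEOTS}) as a black box and transfer it to the nabla setting by the duality $\mathbb{T}\mapsto -\mathbb{T}$ of \cite{cc:dual}, under which ld-continuous functions correspond to rd-continuous ones and nabla antiderivatives to delta antiderivatives.
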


\begin{theorem}[Theorem~8.46 of \cite{BohnerDEOTS}]
\label{th:int:lim:rho}
If $f:\T\rightarrow\R$ is ld-continuous and $t\in\T_{\kappa}$,
then
$$\int_{\rho(t)}^{t}f(\tau)\nabla \tau=\nu(t)f(t).$$
\end{theorem}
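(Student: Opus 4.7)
The plan is to mimic the proof of the delta-analog (Theorem~\ref{th:int:lim:sigma}) using the nabla antiderivative characterization of the integral together with the nabla version of Theorem~\ref{th:differ:delta}(2), which the excerpt notes is contained in Theorem~8.39 of \cite{BohnerDEOTS}. Since $f$ is ld-continuous it admits a nabla antiderivative $F$, so by definition $\int_{\rho(t)}^{t}f(\tau)\nabla\tau=F(t)-F(\rho(t))$, and the task reduces to evaluating this difference at the point $t\in\T_{\kappa}$.

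I would split into the two natural cases according to the classification of points. First, if $t$ is left-scattered, then $\rho(t)<t$ and $\nu(t)>0$; the nabla analog of Theorem~\ref{th:differ:delta}(2) gives
$$
f(t)=F^{\nabla}(t)=\frac{F(t)-F(\rho(t))}{\nu(t)},
$$
so $F(t)-F(\rho(t))=\nu(t)f(t)$, which is exactly the claim. Second, if $t$ is left-dense, then $\rho(t)=t$ and $\nu(t)=0$, in which case
$$
\int_{\rho(t)}^{t}f(\tau)\nabla\tau=\int_{t}^{t}f(\tau)\nabla\tau=F(t)-F(t)=0=\nu(t)f(t),
$$
again as claimed. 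Since every $t\in\T_{\kappa}$ is either left-scattered or left-dense, this exhausts the cases.

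There is essentially no obstacle: the only nontrivial ingredient is the point-scattered formula for $F^{\nabla}$, which is a direct transcription of the delta-case formula $f^{\Delta}(t)=(f^{\sigma}(t)-f(t))/\mu(t)$ with $\s,\mu$ replaced by $\rho,\nu$, and this is available via the cited Theorem~8.39 of \cite{BohnerDEOTS}. The delicate point worth a brief remark is that the identity $F^{\nabla}(t)=(F(t)-F^{\rho}(t))/\nu(t)$ requires continuity of $F$ at $t$; this is automatic because $F$ is nabla differentiable at $t$ and hence continuous there, so the case analysis is valid throughout $\T_{\kappa}$.
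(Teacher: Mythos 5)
Your proof is correct and is exactly the standard argument (the paper itself gives no proof, simply citing Theorem~8.46 of the Bohner--Peterson monograph, where the same antiderivative-plus-case-split reasoning is used). The case analysis into left-scattered and left-dense points is complete, and you correctly note that the continuity of $F$ needed for the scattered-point formula follows from its nabla differentiability.
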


Properties of the nabla integral, analogous to the ones of the delta integral
given in Theorem~\ref{th:int:prop:delta}, can be found
in Theorem~8.47 of \cite{BohnerDEOTS}. Here we give two
special cases of the nabla integral.

\begin{theorem}[See Theorem 8.48 of \cite{BohnerDEOTS}]
Assume $a,b\in\T$ and $f:\T\lra\R$ is ld-continuous.
\begin{enumerate}
\item If \, $\T=\R$, then
$$
\int\limits_{a}^{b}f(t)\n t=\int\limits_{a}^{b} f(t)dt,
$$
where the integral on the right hand side is the Riemann integral.
\item
If $\T$ consists of only isolated points, then
\begin{equation*}
\int\limits_{a}^{b}f(t)\n t
=
\begin{cases}
\sum\limits_{t\in (a,b]}\nu(t)f(t), & \hbox{ if } a<b, \\
0, & \hbox{ if } a=b,\\
-\sum\limits_{t\in (b,a]}\nu(t)f(t), & \hbox{ if } a>b.
\end{cases}
\end{equation*}
\end{enumerate}
\end{theorem}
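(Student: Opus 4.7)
For part~(1), the strategy is to reduce to the classical fundamental theorem of calculus. When $\T=\R$, every point is dense, so $\rho(t)=t$ and $\nu(t)=0$ for all $t$. Consequently, the definition of nabla differentiability collapses to the ordinary definition of differentiability (as already observed in the example following the nabla derivative definition, $f^{\n}(t)=f'(t)$). Hence any nabla antiderivative $F$ of an ld-continuous $f$ is an ordinary antiderivative, and since ld-continuity coincides with continuity on $\R$, the classical Riemann-integral version of the fundamental theorem of calculus gives
\begin{equation*}
\int_{a}^{b} f(t)\n t = F(b)-F(a) = \int_{a}^{b} f(t)\,dt.
\end{equation*}

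For part~(2), I would argue by decomposing the interval and applying Theorem~\ref{th:int:lim:rho}. First dispose of $a=b$: by the definition of the nabla integral, $\int_{a}^{a} f(t)\n t = F(a)-F(a)=0$. For $a<b$, I use that a time scale consisting only of isolated points has no accumulation points (if $t_n \in \T$ with $t_n \to t^\ast$, then $t^\ast \in \T$ by closedness, contradicting isolation of $t^\ast$), so $[a,b]\cap\T$ is finite; enumerate it as $a=t_0<t_1<\cdots<t_n=b$, noting that $\rho(t_k)=t_{k-1}$ and therefore $\nu(t_k)=t_k-t_{k-1}$ for $k=1,\dots,n$. Using the additivity of the nabla integral (the nabla analogue of Theorem~\ref{th:int:prop:delta}(4), given in Theorem~8.47 of \cite{BohnerDEOTS}) and Theorem~\ref{th:int:lim:rho},
\begin{equation*}
\int_{a}^{b} f(\tau)\n\tau
= \sum_{k=1}^{n}\int_{\rho(t_k)}^{t_k} f(\tau)\n\tau
= \sum_{k=1}^{n}\nu(t_k)f(t_k)
= \sum_{t\in(a,b]}\nu(t)f(t).
\end{equation*}
The case $a>b$ reduces to the previous one by the antisymmetry rule $\int_{a}^{b}=-\int_{b}^{a}$ (another direct consequence of the definition via antiderivatives), yielding the stated minus sign and the index set $(b,a]$.

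The main point requiring care is the telescoping step in part~(2): one must justify that on an isolated time scale a bounded interval contains only finitely many points (so the decomposition is a genuine finite sum rather than a series requiring a convergence argument), and that consecutive points $t_{k-1}, t_k$ of $[a,b]\cap\T$ satisfy $\rho(t_k)=t_{k-1}$, so the intervals $(\rho(t_k),t_k]_\T = \{t_k\}$ tile $(a,b]_\T$ exactly once. Everything else is a direct application of ld-continuity (automatic on an isolated time scale, by the remark after the ld-continuity definition) together with the basic properties of the nabla antiderivative.
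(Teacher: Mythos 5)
Your proof is correct; the paper itself gives no argument for this statement (it is quoted directly from Theorem~8.48 of the Bohner--Peterson book), and your reasoning is the standard one: reduction to the fundamental theorem of calculus on $\R$, and for the isolated case the finite telescoping decomposition $\int_a^b f\,\nabla\tau=\sum_k\int_{\rho(t_k)}^{t_k}f\,\nabla\tau=\sum_k\nu(t_k)f(t_k)$ via Theorem~\ref{th:int:lim:rho}. You also correctly flag and settle the only delicate points, namely that a bounded interval of an isolated time scale is finite and that consecutive points satisfy $\rho(t_k)=t_{k-1}$.
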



\subsection{Relation between delta and nabla operators}
\label{subsec:relation:d:n}

It is possible to relate the approach of Section~\ref{sec:prelim:delta}
with that of Section~\ref{sec:prelim:nabla}.

\begin{theorem}[See Theorems~2.5 and 2.6 of \cite{AticiGreen's_functions}]
\label{th:differ:delta:nabla}
If $f:\mathbb{T}\rightarrow\mathbb{R}$ is delta differentiable on $\mathbb{T}^{\kappa}$
and if $f^{\Delta}$ is continuous on $\mathbb{T}^{\kappa}$,
then $f$ is nabla differentiable on $\mathbb{T}_{\kappa}$ with
\begin{equation*}
f^{\nabla}(t)=\left(f^{\Delta}\right)^{\rho}(t) \, \textrm{ for all } t\in\mathbb{T}_{\kappa}.
\end{equation*}
If $f:\mathbb{T}\rightarrow\mathbb{R}$ is nabla differentiable
on $\mathbb{T}_{\kappa}$ and if $f^{\nabla}$ is continuous on $\mathbb{T}_{\kappa}$,
then $f$ is delta differentiable on $\mathbb{T}^{\kappa}$ with
\begin{equation}
\label{eq:delta:nabla:sigma}
f^{\Delta}(t)=\left(f^{\nabla}\right)^{\sigma}(t) \, \textrm{ for all } t\in\mathbb{T}^{\kappa}.
\end{equation}
\end{theorem}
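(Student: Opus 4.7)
The plan is to prove the forward implication (``delta differentiable with continuous delta derivative implies nabla differentiable''); the converse follows by an entirely symmetric argument, swapping the roles of $\sigma,\rho$ and $\mu,\nu$, and yields formula~\eqref{eq:delta:nabla:sigma}. Fix $t \in \mathbb{T}_\kappa$ and split into two cases according to whether $t$ is left-scattered or left-dense.

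If $t$ is left-scattered, the key observation is that $\rho(t) < t$ forces $\sigma(\rho(t)) = t$: no point of $\mathbb{T}$ can lie strictly between $\rho(t)$ and $t$, by the very definition of $\rho$. Hence $\mu(\rho(t)) = t - \rho(t) = \nu(t) > 0$, and $\rho(t)$ is a right-scattered point of $\mathbb{T}^\kappa$. Since delta differentiability implies continuity (item~1 of Theorem~\ref{th:differ:delta}), item~2 of the same theorem applied at $\rho(t)$ gives
\begin{equation*}
f^{\Delta}(\rho(t)) = \frac{f(\sigma(\rho(t))) - f(\rho(t))}{\mu(\rho(t))} = \frac{f(t) - f(\rho(t))}{\nu(t)}.
\end{equation*}
The corresponding nabla statement (the left-scattered analogue of item~2 found in Theorem~8.39 of \cite{BohnerDEOTS}) then shows that $f$ is nabla differentiable at $t$ with the same value, giving $f^{\nabla}(t) = (f^{\Delta})^{\rho}(t)$ in this case.

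If $t$ is left-dense, then $\rho(t) = t$ and the claim reduces to showing that $f^{\nabla}(t) = f^{\Delta}(t)$. This is where the hypothesis that $f^{\Delta}$ is continuous is genuinely used. Given $\varepsilon > 0$, pick $\delta > 0$ so that $|f^{\Delta}(u) - f^{\Delta}(t)| \leq \varepsilon$ for every $u \in U := (t - \delta, t + \delta) \cap \mathbb{T}$. Apply a time-scale mean-value estimate to $g(u) := f(u) - f^{\Delta}(t)\cdot u$, whose delta derivative is $f^{\Delta}(u) - f^{\Delta}(t)$ and is bounded in absolute value by $\varepsilon$ on $U$; this yields $|f(t) - f(s) - f^{\Delta}(t)(t - s)| \leq \varepsilon |t - s|$ for every $s \in U$, which, since $\rho(t) = t$, is exactly the defining inequality of the nabla derivative and gives $f^{\nabla}(t) = f^{\Delta}(t)$.

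The main obstacle is the left-dense case: one must invoke a time-scale mean-value inequality (the statement that a function $g$ continuous on $[a,b]_{\mathbb{T}}$ with $|g^{\Delta}| \leq \varepsilon$ on $[a,b)_{\mathbb{T}}$ satisfies $|g(b) - g(a)| \leq \varepsilon(b - a)$), and verify that it is available on the mixed neighborhood $U$, which may contain both dense and scattered points on either side of $t$. Once this tool is in hand, the two halves of the theorem are parallel: the nabla-to-delta direction proceeds through the same dichotomy (right-scattered or right-dense), with $\mu(\rho(\sigma(t))) = \nu(\sigma(t))$ replacing the identity used above, and delivers~\eqref{eq:delta:nabla:sigma}.
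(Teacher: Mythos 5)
The paper does not prove this statement---it is quoted directly from Theorems~2.5 and~2.6 of Atici and Guseinov---so there is no in-paper argument to compare against; your proof is essentially the standard one from that reference (split on $t$ left-scattered versus left-dense, use $\sigma(\rho(t))=t$ and the quotient formula in the scattered case, and the continuity of $f^{\Delta}$ together with the time-scale mean value inequality in the dense case), and it is correct. The only detail worth making explicit is that in the left-scattered case the nabla analogue of item~2 of Theorem~\ref{th:differ:delta} requires $f$ to be continuous at $t$ itself, not just at $\rho(t)$; this holds because either $t\in\mathbb{T}^{\kappa}$ (so $f$ is delta differentiable, hence continuous, there) or $t=\max\mathbb{T}$ is left-scattered and therefore an isolated point of $\mathbb{T}$, where continuity is automatic.
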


\begin{theorem}[Proposition~17 of \cite{TorresDeltaNabla}]
\label{th:int:delta:nabla}
If function $f:\mathbb{T}\rightarrow\mathbb{R}$ is continuous,
then for all $a,b\in\mathbb{T}$ with $a<b$ we have
\begin{gather*}
\int\limits_{a}^{b}f(t)\Delta t
=\int\limits_{a}^{b}f^{\rho}(t)\nabla t,\\
\int\limits_{a}^{b}f(t)\nabla t
=\int\limits_{a}^{b}f^{\sigma}(t)\Delta t.
\end{gather*}
\end{theorem}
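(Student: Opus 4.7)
The plan is to prove both identities by constructing a single antiderivative that serves both calculi simultaneously, and then invoking the two fundamental theorems of calculus together with Theorem~\ref{th:differ:delta:nabla}.

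For the first identity, I would start by letting $F:\mathbb{T}\to\mathbb{R}$ be a delta antiderivative of $f$. Since $f$ is continuous it is in particular rd-continuous, so by the earlier existence theorem such an $F$ exists and satisfies $F^{\Delta}(t)=f(t)$ for every $t\in\mathbb{T}^{\kappa}$. The Cauchy delta integral then gives $\int_{a}^{b}f(t)\Delta t=F(b)-F(a)$. Now I would observe that $F^{\Delta}=f$ is continuous on $\mathbb{T}^{\kappa}$, so the first part of Theorem~\ref{th:differ:delta:nabla} applies: $F$ is nabla differentiable on $\mathbb{T}_{\kappa}$ with
\begin{equation*}
F^{\nabla}(t)=\bigl(F^{\Delta}\bigr)^{\rho}(t)=f(\rho(t))=f^{\rho}(t).
\end{equation*}
Thus $F$ is simultaneously a nabla antiderivative of $f^{\rho}$, and the definition of the nabla integral yields $\int_{a}^{b}f^{\rho}(t)\nabla t=F(b)-F(a)$, which matches $\int_{a}^{b}f(t)\Delta t$.

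For the second identity I would mirror the argument. Continuity of $f$ implies ld-continuity, so there is a nabla antiderivative $G$ with $G^{\nabla}=f$ on $\mathbb{T}_{\kappa}$, giving $\int_{a}^{b}f(t)\nabla t=G(b)-G(a)$. Since $G^{\nabla}=f$ is continuous on $\mathbb{T}_{\kappa}$, the second part of Theorem~\ref{th:differ:delta:nabla} (in particular the identity \eqref{eq:delta:nabla:sigma}) gives
\begin{equation*}
G^{\Delta}(t)=\bigl(G^{\nabla}\bigr)^{\sigma}(t)=f(\sigma(t))=f^{\sigma}(t)
\end{equation*}
on $\mathbb{T}^{\kappa}$, so $G$ is also a delta antiderivative of $f^{\sigma}$, and the delta-integral definition closes the argument: $\int_{a}^{b}f^{\sigma}(t)\Delta t=G(b)-G(a)$.

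The key conceptual point, and the only place where something could fail, is the hypothesis check that the derivative of the antiderivative we picked is continuous, since this is what activates Theorem~\ref{th:differ:delta:nabla}. That hypothesis is handed to us for free because the derivative coincides with the given function $f$, which is assumed continuous. I therefore do not expect a genuine technical obstacle; the only point requiring a moment of care is that delta/nabla antiderivatives agree only up to constants on $\mathbb{T}^{\kappa}$ resp.\ $\mathbb{T}_{\kappa}$, but since we evaluate the same function $F$ (or $G$) at the endpoints $a$ and $b$ the constant drops out and both integral identities follow immediately.
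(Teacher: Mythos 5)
Your argument is correct and is essentially the standard proof of this result (the survey itself only cites \cite{TorresDeltaNabla} for it): take one antiderivative $F$ (resp.\ $G$) of $f$, use Theorem~\ref{th:differ:delta:nabla} to see that the same function is simultaneously a nabla antiderivative of $f^{\rho}$ (resp.\ a delta antiderivative of $f^{\sigma}$), and evaluate at the endpoints. The hypothesis check that the relevant derivative is continuous is exactly right, since it equals the continuous function $f$, so nothing further is needed.
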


For a more general theory relating delta and nabla approaches,
we refer the reader to the duality theory of Caputo \cite{cc:dual}.


\section{Direct problems of the calculus of variations on time scales}
\label{CoV}

There are two available approaches to the (direct) calculus of variations on time scales.
The first one, the delta approach, is widely described in literature (see, e.g.,
\cite{BohnerCOVOTS,mb:gg:ap,BohnerDEOTS,MBbook2003,FerreiraTorres,MR2405376,%
HilscgerZeidan,TorresDeltaNabla,Generalizing_the_variational_theory,Torres,Wyrwas}).
The latter one, the nabla approach, was introduced mainly due to its applications in economics
(see, e.g., \cite{MR2218315,AticiGreen's_functions,Atici:comparison,Atici:HMMS}).
It has been shown that these two types of calculus of variations
are dual \cite{cc:dual,MR2957726,MalinowskaTorresCompositionNabla}.


\subsection{The delta approach to the calculus of variations}
\label{sec:calculus:of:variations}

In this section we present the basic information
about the delta calculus of variations on time scales.
Let $\mathbb{T}$ be a given time scale with at least three points,
and $a,b\in\T$, $a<b$, $a=\min\T$ and $b=\max\T$.
Consider the following variational problem on the time scale $\mathbb{T}$:
\begin{equation}
\label{eq:var:probl:delta:sigma}
\mathcal{L}[y]=\int\limits_{a}^{b}
L\left(t, y^{\sigma}(t),y^{\Delta}(t)\right)\Delta t \longrightarrow \min
\end{equation}
subject to the boundary conditions
\begin{equation}
\label{eq:bound:conds:1}
y(a)=y_{a}, \quad y(b)=y_{b}, \quad y_{a},y_{b}\in\R^{n}, \quad n\in\N.
\end{equation}

\begin{definition}
A function $y\in C_{rd}^{1}(\mathbb{T},\R^{n})$ is said to be an admissible path
(function) to problem \eqref{eq:var:probl:delta:sigma}--\eqref{eq:bound:conds:1}
if it satisfies the given boundary conditions $y(a)=y_{a}$, $y(b)=y_{b}$.
\end{definition}

In what follows the Lagrangian
$L$ is understood as a function $L:\T\times\R^{2n}\lra\R$,
$(t,y,v) \rightarrow L(t,y,v)$, and by $L_y$ and $L_v$ we denote
the partial derivatives of $L$ with respect to $y$ and $v$, respectively.
Similar notation is used for second order partial derivatives.
We assume that $L(t,\cdot,\cdot)$ is differentiable in $(y,v)$;
$L(t,\cdot,\cdot)$, $L_{y}(t,\cdot,\cdot)$ and $L_{v}(t,\cdot,\cdot)$
are continuous at $\left(y^{\s}(t),y^{\Delta}(t)\right)$ uniformly at $t$
and rd-continuous at $t$ for any admissible path $y$.
Let us consider the following norm in $C_{rd}^{1}$:
$$
\|y\|_{C^{1}_{rd}}
= \sup_{t\in[a,b]}\|y(t)\|
+\sup_{t\in[a,b]^{\kappa}}\|y^{\triangle}(t)\|,
$$
where $\|\cdot\|$ is the Euclidean norm in $\mathbb{R}^n$.

\begin{definition}
We say that an admissible function $\hat{y}\in C^{1}_{rd}(\T;\R^{n})$
is a local minimizer (respectively, a local maximizer)
to problem \eqref{eq:var:probl:delta:sigma}--\eqref{eq:bound:conds:1}
if there exists $\delta >0$ such that $\mathcal{L}[\hat{y}]
\le\mathcal{L}[y]$ (respectively, $\mathcal{L}[\hat{y}]\geq\mathcal{L}[y]$) for all admissible
functions $y\in C^{1}_{rd}(\T; \R^{n})$ satisfying the inequality $||y-\hat{y}||_{C_{rd}^{1}}<\delta$.
\end{definition}

Local minimizers (or maximizers) to problem \eqref{eq:var:probl:delta:sigma}--\eqref{eq:bound:conds:1}
fulfill the delta differential Euler--Lagrange equation.

\begin{theorem}[Delta differential Euler--Lagrange equation -- see Theorem~4.2 of \cite{BohnerCOVOTS}]
\label{th:EL:1}
If $\hat{y}\in C^{2}_{rd}(\T; \R^{n})$ is a local minimizer
to \eqref{eq:var:probl:delta:sigma}--\eqref{eq:bound:conds:1},
then the Euler--Lagrange equation (in the delta differential form)
\begin{equation*}
L^{\Delta}_{v}\left(t,\hat{y}^{\s}(t),\hat{y}^{\Delta}(t)\right)
=L_{y}\left(t,\hat{y}^{\s}(t),\hat{y}^{\Delta}(t)\right)
\end{equation*}
holds for $t\in [a,b]^{\kappa}$.
\end{theorem}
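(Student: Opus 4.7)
The plan is to adapt the classical Lagrange variational argument to the time-scale setting, combining the delta integration by parts formula (item~6 of Theorem~\ref{th:int:prop:delta}) with a time-scale version of the du Bois-Reymond fundamental lemma.

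First, I would fix an arbitrary variation $\eta\in C^{1}_{rd}(\T;\R^{n})$ with $\eta(a)=\eta(b)=0$, so that $\hat{y}+\varepsilon\eta$ remains admissible for every $\varepsilon\in\R$, and define the scalar function $\phi(\varepsilon):=\mathcal{L}[\hat{y}+\varepsilon\eta]$. Since $\|\varepsilon\eta\|_{C^{1}_{rd}}\to 0$ as $\varepsilon\to 0$ and $\hat{y}$ is a local minimizer, $\phi$ attains a local minimum at $\varepsilon=0$; hence $\phi'(0)=0$. The regularity hypotheses on $L$ (differentiability in $(y,v)$ together with the stated uniform continuity along the trajectory) justify differentiation under the delta integral, yielding
$$0=\int_{a}^{b}\Bigl[L_{y}\bigl(t,\hat{y}^{\sigma}(t),\hat{y}^{\Delta}(t)\bigr)\eta^{\sigma}(t)+L_{v}\bigl(t,\hat{y}^{\sigma}(t),\hat{y}^{\Delta}(t)\bigr)\eta^{\Delta}(t)\Bigr]\Delta t.$$

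Next, I would introduce the delta antiderivative $A(t):=\int_{a}^{t}L_{y}\bigl(\tau,\hat{y}^{\sigma}(\tau),\hat{y}^{\Delta}(\tau)\bigr)\Delta\tau$, which is well defined by rd-continuity of the integrand and satisfies $A^{\Delta}=L_{y}(\cdot,\hat{y}^{\sigma},\hat{y}^{\Delta})$. Applying delta integration by parts to the first term and invoking $\eta(a)=\eta(b)=0$ rewrites the first-order condition as
$$\int_{a}^{b}\bigl[L_{v}\bigl(t,\hat{y}^{\sigma}(t),\hat{y}^{\Delta}(t)\bigr)-A(t)\bigr]\eta^{\Delta}(t)\Delta t=0$$
for every admissible $\eta$. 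The time-scale du Bois-Reymond lemma, which asserts that if $g\in C_{rd}$ satisfies $\int_{a}^{b}g(t)\eta^{\Delta}(t)\Delta t=0$ for every such $\eta$ then $g$ is constant on $[a,b]^{\kappa}$, then forces $L_{v}(\cdot,\hat{y}^{\sigma},\hat{y}^{\Delta})-A$ to be a constant. Delta-differentiating gives $L_{v}^{\Delta}=A^{\Delta}=L_{y}$ on $[a,b]^{\kappa}$, as required; the $C^{2}_{rd}$ regularity of $\hat{y}$ ensures the pointwise existence of $L_{v}^{\Delta}$.

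The principal obstacle is the du Bois-Reymond lemma itself, since classical bump-function constructions break down at right-scattered points. The standard workaround is to test against $\eta(t):=\int_{a}^{t}(g(\tau)-c)\Delta\tau$, where the constant $c$ is chosen so that $\eta(b)=0$; this $\eta$ lies in $C^{1}_{rd}$ with $\eta^{\Delta}=g-c$, and substitution into the hypothesis yields $\int_{a}^{b}(g-c)^{2}\Delta t=0$, whence $g\equiv c$. A secondary technical point is the legitimacy of differentiating $\phi$ under the delta integral, which is precisely what the uniform continuity assumptions on $L$, $L_{y}$ and $L_{v}$ are designed to guarantee.
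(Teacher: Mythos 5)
Your proof is correct and follows essentially the route the paper indicates: compute the first variation, integrate by parts via item~7 of Theorem~\ref{th:int:prop:delta}, apply the time-scale du Bois--Reymond lemma (Lemma~\ref{lem:Dubois:Reymond:delta}) to obtain the integral form \eqref{eq:EL:delta:sigma}, and then delta-differentiate using the $C^{2}_{rd}$ regularity of $\hat{y}$. Your sketch of the du Bois--Reymond lemma via the test function $\eta(t)=\int_{a}^{t}\left(g(\tau)-c\right)\Delta\tau$ is likewise the standard argument.
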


The next theorem provides the delta integral Euler--Lagrange equation.

\begin{theorem}[Delta integral Euler--Lagrange equation -- see \cite{FerreiraTorres,HilscgerZeidan}]
\label{th:EL:delta:sigma}
If $\hat{y}(t)\in C_{rd}^{1}(\T; \R^{n})$
is a local minimizer of the variational problem \eqref{eq:var:probl:delta:sigma}--\eqref{eq:bound:conds:1},
then there exists a vector $c\in\mathbb{R}^{n}$ such that
the Euler--Lagrange equation (in the delta integral form)
\begin{equation}
\label{eq:EL:delta:sigma}
L_{v}\left(t, \hat{y}^{\sigma}(t),\hat{y}^{\Delta}(t)\right)
=\int\limits_{a}^{t}L_{y}(\tau, \hat{y}^{\sigma}(\tau),\hat{y}^{\Delta}(\tau))\Delta\tau
+c^{T}
\end{equation}
holds for $t\in[a,b]^{\kappa}$.
\end{theorem}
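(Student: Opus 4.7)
The plan is to derive the statement from a standard first-variation argument on time scales, turning the Euler--Lagrange condition into integral form via integration by parts and concluding with a Dubois--Reymond-type lemma.

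First I would fix an arbitrary variation $\eta \in C_{rd}^1(\mathbb{T};\mathbb{R}^n)$ with $\eta(a)=\eta(b)=0$ and consider the admissible family $y = \hat{y} + \varepsilon\eta$ for small $\varepsilon\in\mathbb{R}$. The regularity assumptions on $L$ ($L(t,\cdot,\cdot)$ differentiable in $(y,v)$ with continuous partials, uniformly in $t$ and rd-continuous in $t$) justify differentiating under the delta integral, so the real-valued function $\phi(\varepsilon):=\mathcal{L}[\hat{y}+\varepsilon\eta]$ is differentiable at $0$. Since $\hat{y}$ is a local minimizer, $\phi'(0)=0$, which after the chain rule yields the weak form
\begin{equation*}
\int_a^b \Bigl[ L_y\bigl(t,\hat{y}^\sigma(t),\hat{y}^\Delta(t)\bigr)\,\eta^\sigma(t)
+ L_v\bigl(t,\hat{y}^\sigma(t),\hat{y}^\Delta(t)\bigr)\,\eta^\Delta(t)\Bigr]\Delta t = 0.
\end{equation*}

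Next I would introduce the antiderivative
\begin{equation*}
A(t) := \int_a^t L_y\bigl(\tau,\hat{y}^\sigma(\tau),\hat{y}^\Delta(\tau)\bigr)\Delta \tau,
\end{equation*}
so that $A^\Delta(t) = L_y(t,\hat{y}^\sigma(t),\hat{y}^\Delta(t))$ by the fundamental theorem on time scales, and apply the delta integration-by-parts formula (property~7 of Theorem~\ref{th:int:prop:delta}) with $f=\eta$ and $g=A$:
\begin{equation*}
\int_a^b L_y\,\eta^\sigma(t)\,\Delta t
=\int_a^b \eta^\sigma(t) A^\Delta(t)\,\Delta t
= \bigl[\eta(t) A(t)\bigr]_a^b - \int_a^b \eta^\Delta(t) A(t)\,\Delta t.
\end{equation*}
The boundary term vanishes because $\eta(a)=\eta(b)=0$, and substituting back into the weak form gives
\begin{equation*}
\int_a^b \Bigl[ L_v\bigl(t,\hat{y}^\sigma(t),\hat{y}^\Delta(t)\bigr) - A(t)\Bigr]\,\eta^\Delta(t)\,\Delta t = 0
\end{equation*}
for every admissible variation $\eta$.

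Finally I would invoke a Dubois--Reymond-type fundamental lemma of the calculus of variations on time scales: if $h\in C_{rd}([a,b]^\kappa;\mathbb{R}^n)$ satisfies $\int_a^b h(t)\,\eta^\Delta(t)\,\Delta t = 0$ for all $\eta\in C_{rd}^1$ vanishing at the endpoints, then $h\equiv c^T$ on $[a,b]^\kappa$ for some constant vector $c\in\mathbb{R}^n$. Applied to $h(t):=L_v(t,\hat{y}^\sigma(t),\hat{y}^\Delta(t)) - A(t)$, this yields exactly \eqref{eq:EL:delta:sigma}.

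The main obstacle is the Dubois--Reymond lemma in the time-scale setting: one must verify it carefully to cover right-scattered as well as right-dense points, which is why arbitrary admissible $\eta$ (not merely smooth bump-like ones) must be allowed. A standard argument is to choose $\eta^\Delta(t) := h(t) - \bar h$ with $\bar h := (b-a)^{-1}\int_a^b h(t)\Delta t$ and then define $\eta(t):=\int_a^t (h(\tau)-\bar h)\Delta\tau$, which is in $C_{rd}^1$ and vanishes at both endpoints, forcing $\int_a^b (h-\bar h)^2\Delta t=0$ and hence $h\equiv \bar h=:c^T$.
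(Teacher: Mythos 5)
Your proposal is correct and follows essentially the same route the paper indicates: a first-variation computation, delta integration by parts (property~7 of Theorem~\ref{th:int:prop:delta}) applied to the antiderivative of $L_y$, and then the time-scale Dubois--Reymond lemma (Lemma~\ref{lem:Dubois:Reymond:delta}), which the paper explicitly identifies as the key tool in the proof of Theorem~\ref{th:EL:delta:sigma}. Your concluding sketch of the lemma itself, via $\eta^{\Delta}(t)=h(t)-\bar{h}$, is also the standard argument used in the cited sources.
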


In the proof of Theorem~\ref{th:EL:1} and Theorem~\ref{th:EL:delta:sigma} a time scale
version of the Dubois--Reymond lemma is used.

\begin{lemma}[See \cite{BohnerCOVOTS,MR2405376}]
\label{lem:Dubois:Reymond:delta}
Let $f\in C_{rd}$, $f:[a,b]\lra\R^{n}$. Then
$$
\int\limits_{a}^{b}f^{T}(t)\eta^{\Delta}(t)\Delta t=0
$$
holds for all $\eta \in C^{1}_{rd}([a,b],\mathbb{R}^{n})$ with $\eta(a)=\eta(b)=0$
if and only if $f(t) = c$ for all $t\in [a,b]^{\kappa}$, $c\in\R^{n}$.
\end{lemma}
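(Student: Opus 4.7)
The plan is to establish the two implications separately, with the nontrivial direction handled by an explicit construction of the test function $\eta$.

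For the sufficiency direction (the easy one), if $f(t)=c$ is constant on $[a,b]^{\kappa}$, then by linearity and by the fundamental theorem for the delta integral,
$$
\int_{a}^{b} c^{T}\eta^{\Delta}(t)\,\Delta t
= c^{T}\bigl(\eta(b)-\eta(a)\bigr) = 0,
$$
because $\eta(a)=\eta(b)=0$. (At a right-scattered endpoint one may use that the integral picks up the correct boundary terms via Theorem~\ref{th:int:lim:sigma}.)

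For the necessity direction, I would define the candidate constant componentwise by
$$
c := \frac{1}{b-a}\int_{a}^{b} f(t)\,\Delta t \in \R^{n},
$$
and then build $\eta(t) := \int_{a}^{t}\bigl(f(\tau)-c\bigr)\Delta\tau$. Because $f-c$ is rd-continuous, $\eta$ belongs to $C^{1}_{rd}([a,b],\R^{n})$ with $\eta^{\Delta}(t)=f(t)-c$; moreover $\eta(a)=0$ and $\eta(b)=0$ by the very choice of $c$, so $\eta$ is admissible as a test function. Substituting into the hypothesis gives $\int_{a}^{b}f^{T}(t)\bigl(f(t)-c\bigr)\Delta t=0$, while applying the fundamental theorem to the constant vector $c$ yields $\int_{a}^{b}c^{T}\bigl(f(t)-c\bigr)\Delta t = c^{T}(\eta(b)-\eta(a))=0$. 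Subtracting the two identities produces
$$
\int_{a}^{b}\bigl\|f(t)-c\bigr\|^{2}\,\Delta t = 0.
$$

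The main obstacle is the last step: concluding from this identity that $f(t)=c$ pointwise on $[a,b]^{\kappa}$. The integrand $g(t):=\|f(t)-c\|^{2}$ is nonnegative and rd-continuous, but on a general time scale one must handle right-scattered and right-dense points separately. At any right-scattered $t$, Theorem~\ref{th:int:lim:sigma} gives $\int_{t}^{\sigma(t)}g(\tau)\Delta\tau=\mu(t)g(t)$, which together with nonnegativity forces $g(t)=0$. At a right-dense $t_{0}$ with $g(t_{0})>0$, continuity of $g$ at $t_{0}$ combined with the fact that $t_{0}$ is an accumulation point of $\T$ from the right produces an interval $[t_{0},t_{0}+\delta)\cap\T$ on which $g\geq g(t_{0})/2$; integrating over this set contradicts the vanishing of the total integral. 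Hence $g\equiv 0$ on $[a,b]^{\kappa}$, i.e., $f(t)=c$ there, completing the proof.
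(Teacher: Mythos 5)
Your proof is correct and follows essentially the same route as the cited sources (the paper itself only refers to \cite{BohnerCOVOTS,MR2405376} for the proof): taking $c$ to be the mean value of $f$, testing against $\eta(t)=\int_a^t(f(\tau)-c)\Delta\tau$, and deducing $\int_a^b\|f(t)-c\|^2\Delta t=0$ is exactly Bohner's argument. Your extra care in the last step, treating right-scattered points via Theorem~\ref{th:int:lim:sigma} and right-dense points via rd-continuity, correctly fills in the detail that the references leave implicit.
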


The next theorem gives a second order necessary optimality condition
for problem \eqref{eq:var:probl:delta:sigma}--\eqref{eq:bound:conds:1}.

\begin{theorem}[Legendre condition -- see Result 1.3 of \cite{BohnerCOVOTS}]
\label{th:Legendre}
If $\hat{y}\in C^{2}_{rd}(\T; \R^{n})$ is a local minimizer of the variational problem
\eqref{eq:var:probl:delta:sigma}--\eqref{eq:bound:conds:1}, then
\begin{equation}
\label{eq:Legendre}
A(t)+\mu(t)\left\lbrace C(t)+C^{T}(t)+\mu(t)B(t)
+(\mu(\sigma(t)))^{\dag}A(\sigma(t))\right\rbrace\geq 0,
\end{equation}
$t\in[a,b]^{\kappa^{2}}$, where
\begin{equation*}
\begin{split}
&
A(t)=L_{vv}\left(t,\hat{y}^{\sigma}(t),\hat{y}^{\Delta}(t)\right),\\
&
B(t)=L_{yy}\left(t,\hat{y}^{\sigma}(t),\hat{y}^{\Delta}(t)\right),\\
&
C(t)=L_{yv}\left(t,\hat{y}^{\sigma}(t),\hat{y}^{\Delta}(t)\right)
\end{split}
\end{equation*}
and where $\alpha^{\dag}=\frac{1}{\alpha}$
if $\alpha\in\mathbb{R}\setminus\lbrace 0 \rbrace$ and $0^{\dag}=0$.
\end{theorem}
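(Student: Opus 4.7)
The standard route is the second variation argument adapted to the time-scale setting. Let $\eta\in C_{rd}^{2}(\T;\R^{n})$ with $\eta(a)=\eta(b)=0$, so $\hat{y}+\varepsilon\eta$ is admissible for every $\varepsilon\in\R$. Expanding $\mathcal{L}[\hat{y}+\varepsilon\eta]$ in $\varepsilon$ and using that the first variation vanishes on a minimizer (Theorem~\ref{th:EL:1}), the nonnegativity of the second variation reads
\[
Q[\eta]:=\int_{a}^{b}\left\{(\eta^{\Delta})^{T}A(t)\,\eta^{\Delta}+2(\eta^{\s})^{T}C(t)^{T}\eta^{\Delta}+(\eta^{\s})^{T}B(t)\,\eta^{\s}\right\}\Delta t\;\geq\;0
\]
for every such $\eta$. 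The differentiability and rd-continuity hypotheses on $L$ make this expansion rigorous and the integrand rd-continuous.

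\textbf{Localization.} To convert $Q[\eta]\geq 0$ into a pointwise inequality at an arbitrary $t_{0}\in[a,b]^{\kappa^{2}}$, I would feed $Q$ a family of variations concentrated near $t_{0}$. If $t_{0}$ is right-dense, a classical bump rescaled so that $\eta^{\Delta}$ dominates while $\eta^{\s}$ remains small forces $A(t_{0})\geq 0$, which matches the $\mu(t_{0})=0$ case of \eqref{eq:Legendre}. If $t_{0}$ is right-scattered, the argument becomes algebraic: I would take $\eta$ supported essentially on $\{t_{0},\s(t_{0})\}$, with $\eta(t_{0})=0$ and $\eta(\s(t_{0}))=\xi$ for an arbitrary $\xi\in\R^{n}$, together with $\eta(\s^{2}(t_{0}))=0$ when $\s(t_{0})$ is itself right-scattered (smoothly continued to vanish before reaching $b$).

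\textbf{Computation.} By Theorem~\ref{th:int:lim:sigma} the relevant contributions reduce to the two cells $[t_{0},\s(t_{0}))$ and $[\s(t_{0}),\s^{2}(t_{0}))$, each of which collapses to $\mu$ times the integrand evaluated at its left endpoint. Part~4 of Theorem~\ref{th:differ:delta} gives $\eta^{\Delta}(t_{0})=\xi/\mu(t_{0})$, $\eta^{\s}(t_{0})=\xi$, $\eta^{\Delta}(\s(t_{0}))=-\xi/\mu(\s(t_{0}))$, and $\eta^{\s}(\s(t_{0}))=0$. Substituting into $Q[\eta]$ and multiplying through by $\mu(t_{0})$ packages all contributions into
\[
\xi^{T}\!\left\{A(t_{0})+\mu(t_{0})\Bigl(C(t_{0})+C^{T}(t_{0})+\mu(t_{0})B(t_{0})+(\mu(\s(t_{0})))^{\dag}A(\s(t_{0}))\Bigr)\right\}\!\xi\;\geq\;0,
\]
and since $\xi$ is arbitrary this is precisely \eqref{eq:Legendre}. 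The convention $0^{\dag}=0$ absorbs the subcase where $\s(t_{0})$ is right-dense: no impulse can be placed there, so that last term drops out automatically.

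\textbf{Main obstacle.} The delicate part is producing admissible $C_{rd}^{1}$ test functions with prescribed pointwise behaviour on a time scale that may mix scattered and dense stretches: the variation must be concentrated at (or span only) the pair $\{t_{0},\s(t_{0})\}$, remain smooth enough on any right-dense piece attached to them, and still satisfy the fixed boundary conditions at $a$ and $b$. The coupling $\eta^{\s}=\eta+\mu\eta^{\Delta}$ is exactly what produces the cross-term $\mu(t_{0})[C(t_{0})+C^{T}(t_{0})]$, and keeping careful track of this coupling -- so that no unwanted contribution from $B(\s(t_{0}))$ or $C(\s(t_{0}))$ leaks into the final inequality -- is the principal bookkeeping challenge of the argument.
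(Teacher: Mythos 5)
The paper gives no proof of Theorem~\ref{th:Legendre}; it is quoted from Bohner's Result~1.3. So I can only judge your argument on its own terms. Your setup --- nonnegativity of the second variation $Q[\eta]\geq 0$ followed by localization at $t_{0}$ --- is the standard route, and two of your three cases are sound: at a right-dense $t_{0}$ the rescaled bump gives $A(t_{0})\geq 0$, which is \eqref{eq:Legendre} with $\mu(t_{0})=0$; and at a right-scattered $t_{0}$ whose successor $\sigma(t_{0})$ is also right-scattered, taking $\eta$ supported on $\{\sigma(t_{0})\}$, collapsing the integral to the two cells via Theorem~\ref{th:int:lim:sigma}, and multiplying by $\mu(t_{0})>0$ does produce exactly the quadratic form in \eqref{eq:Legendre}. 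I verified this computation; it is correct.

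The gap is the remaining case: $t_{0}$ right-scattered but $\sigma(t_{0})$ right-dense. You assert that the convention $0^{\dag}=0$ ``absorbs'' this subcase because ``no impulse can be placed there, so that last term drops out automatically.'' It does not. To see the cell $[t_{0},\sigma(t_{0}))$ at all you must take $\eta(\sigma(t_{0}))=\xi\neq 0$, and since $\sigma(t_{0})$ is right-dense, $\eta$ must then return to $0$ across a portion of the time scale of positive length $\varepsilon$. By Cauchy--Schwarz, the tail contribution $\int_{\sigma(t_{0})}^{\sigma(t_{0})+\varepsilon}(\eta^{\Delta})^{T}A\,\eta^{\Delta}\,\Delta t$ is bounded below by a quantity of order $\xi^{T}A(\sigma(t_{0}))\xi/\varepsilon$; it does not vanish as $\varepsilon\to 0$ but diverges whenever $\xi^{T}A(\sigma(t_{0}))\xi>0$, so the tail cannot be discarded and $Q[\eta]\geq 0$ yields no pointwise inequality at $t_{0}$. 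Worse, no choice of test function can rescue the localization here: take $\T=\{0\}\cup[1,2]$, $n=1$, $L(t,y,v)=\tfrac12 v^{2}-\tfrac{3}{5}y^{2}$, $y(0)=y(2)=0$. Then $\hat{y}\equiv 0$ is a strict global minimizer, because $\mathcal{L}[y]=-\tfrac{1}{10}\,y(1)^{2}+\int_{1}^{2}\bigl[\tfrac12 y'^{2}-\tfrac{3}{5}y^{2}\bigr]dt$ and (there being no conjugate point on $[1,2]$, as $\sqrt{6/5}<\pi$) the integral is at least $\approx 0.28\,y(1)^{2}$; yet at $t_{0}=0$ one has $A(0)+\mu(0)\{\mu(0)B(0)+0^{\dag}A(1)\}=1-\tfrac{6}{5}<0$. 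So in this configuration the pointwise inequality simply does not follow from $Q[\eta]\geq 0$, your argument cannot close, and the subcase requires either the quadratic-functional/Jacobi machinery of the cited source or a careful re-examination of the exact hypotheses and quantifiers in Bohner's original statement. At a minimum, the sentence claiming the last term ``drops out automatically'' must be replaced by a genuine treatment of this case.
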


\begin{remark}
If \eqref{eq:Legendre} holds with the strict inequality ``$>$'',
then it is called \emph{the strengthened Legendre
condition}.
\end{remark}


\subsection{The nabla approach to the calculus of variations}
\label{subsec:nabla:CoV}

In this section we consider a problem of the calculus of variations
that involves a functional with a nabla derivative and a nabla integral.
The motivation to study such variational problems is coming from applications,
in particular from economics \cite{MR2218315,Atici:HMMS}.
Let $\T$ be a given time scale, which has sufficiently many points
in order for all calculations to make sense, and let $a,b\in\T$, $a<b$.
The problem consists of minimizing or maximizing
\begin{equation}
\label{eq:var:probl:nabla}
\mathcal{L}[y]=\int\limits_{a}^{b}
L\left(t, y^{\rho}(t), y^{\n}(t)\right)\n t
\end{equation}
in the class of functions $y\in C^{1}_{ld}(\T;\R^{n})$ subject to the boundary conditions
\begin{equation}
\label{eq:bound:conds:nabla}
y(a)=y_{a}, \quad y(b)=y_{b}, \quad y_{a}, y_{b}\in\R^{n}, \quad n\in\N.
\end{equation}

\begin{definition}
A function $y\in C_{ld}^{1}(\mathbb{T},\R^{n})$ is said to be an admissible path (function) to problem
\eqref{eq:var:probl:nabla}--\eqref{eq:bound:conds:nabla} if it satisfies
the given boundary conditions $y(a)=y_{a}$, $y(b)=y_{b}$.
\end{definition}
As before, the Lagrangian $L$ is understood as a function $L:\T\times\R^{2n}\lra\R$,
$(t,y,v) \rightarrow L(t,y,v)$.
We assume that $L(t,\cdot,\cdot)$ is differentiable in $(y,v)$;
$L(t,\cdot,\cdot)$, $L_{y}(t,\cdot,\cdot)$ and $L_{v}(t,\cdot,\cdot)$
are continuous at $\left(y^{\rho}(t),y^{\nabla}(t)\right)$ uniformly at $t$
and ld-con\-ti\-nu\-ous at $t$ for any admissible path $y$.
Let us consider the following norm in $C_{ld}^{1}$:
$$
\|y\|_{C^{1}_{ld}}
= \sup_{t\in[a,b]}\|y(t)\|
+\sup_{t\in[a,b]_{\kappa}}\|y^{\nabla}(t)\|
$$
with $\|\cdot\|$ the Euclidean norm in $\mathbb{R}^n$.

\begin{definition}[See \cite{Almeida:iso}]
We say that an admissible function $y\in C^{1}_{ld}(\T;\R^{n})$ is a local minimizer
(respectively, a local maximizer) for the variational problem
\eqref{eq:var:probl:nabla}--\eqref{eq:bound:conds:nabla} if there exists $\delta >0$
such that $\mathcal{L}[\hat{y}]\leq\mathcal{L} [y]$  (respectively,
$\mathcal{L}[\hat{y}]\geq\mathcal{L} [y]$) for all $y\in C^{1}_{ld}(\T; \R^{n})$
satisfying the inequality $||y-\hat{y}||_{C^{1}_{ld}} <\delta$.
\end{definition}

In case of the first order necessary optimality condition
for nabla variational problem on time scales
\eqref{eq:var:probl:nabla}--\eqref{eq:bound:conds:nabla},
the Euler--Lagrange equation takes the following form.

\begin{theorem}[Nabla Euler--Lagrange equation -- see \cite{Torres}]
\label{th:var:probl:nabla}
If a function $\hat{y}\in C_{ld}^{1}(\T;\R^{n})$
provides a local extremum to the variational problem
\eqref{eq:var:probl:nabla}--\eqref{eq:bound:conds:nabla}, then
$\hat{y}$ satisfies the Euler--Lagrange equation (in the nabla differential form)
\begin{equation*}
L^{\nabla}_{v}\left(t, y^{\rho}(t), y^{\n}(t)\right)
=L_{y}\left(t, y^{\rho}(t), y^{\n}(t)\right)
\end{equation*}
for all $t\in [a,b]_{\kappa}$.
\end{theorem}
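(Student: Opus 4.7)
The plan is to adapt the standard first-variation argument to the nabla setting, mirroring the derivation of Theorem~\ref{th:EL:1}. Fix any test function $\eta\in C^{1}_{ld}(\T;\R^{n})$ with $\eta(a)=\eta(b)=0$; then $y_{\varepsilon}:=\hat{y}+\varepsilon\eta$ is admissible for every $\varepsilon\in\R$, and for $|\varepsilon|$ sufficiently small $y_{\varepsilon}$ lies inside the prescribed $C^{1}_{ld}$-neighborhood of $\hat{y}$. Set $\phi(\varepsilon):=\mathcal{L}[y_{\varepsilon}]$. The local extremum property of $\hat{y}$ forces $\phi'(0)=0$.

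Next, I would differentiate under the nabla integral---justified by the assumed continuity and differentiability hypotheses on $L$, $L_{y}$, $L_{v}$---to obtain
$$
0=\int_{a}^{b}\Bigl[L_{y}\bigl(t,\hat{y}^{\rho}(t),\hat{y}^{\n}(t)\bigr)\eta^{\rho}(t)
+L_{v}\bigl(t,\hat{y}^{\rho}(t),\hat{y}^{\n}(t)\bigr)\eta^{\n}(t)\Bigr]\n t.
$$
To eliminate $\eta^{\n}$ I would invoke nabla integration by parts (the nabla analogue of Theorem~\ref{th:int:prop:delta}(6), stated as Theorem~8.47 of \cite{BohnerDEOTS}) in the form
$$
\int_{a}^{b}L_{v}\,\eta^{\n}\,\n t
=\bigl[L_{v}\,\eta\bigr]_{a}^{b}
-\int_{a}^{b}L_{v}^{\n}\,\eta^{\rho}\,\n t.
$$
The boundary term vanishes because $\eta(a)=\eta(b)=0$, leaving
$$
\int_{a}^{b}\Bigl[L_{y}\bigl(t,\hat{y}^{\rho},\hat{y}^{\n}\bigr)
-L_{v}^{\n}\bigl(t,\hat{y}^{\rho},\hat{y}^{\n}\bigr)\Bigr]\eta^{\rho}(t)\,\n t=0
$$
for every admissible $\eta$.

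Finally, I would appeal to a nabla counterpart of the Dubois--Reymond-type Lemma~\ref{lem:Dubois:Reymond:delta}: if $g\in C_{ld}([a,b],\R^{n})$ satisfies $\int_{a}^{b}g^{T}(t)\eta^{\rho}(t)\,\n t=0$ for every $\eta\in C^{1}_{ld}([a,b],\R^{n})$ with $\eta(a)=\eta(b)=0$, then $g(t)\equiv 0$ on $[a,b]_{\kappa}$. Applied componentwise with $g=L_{y}-L_{v}^{\n}$, this yields the stated Euler--Lagrange equation on $[a,b]_{\kappa}$.

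The most delicate step is the last one: producing a usable nabla fundamental lemma on an arbitrary time scale. One must construct an admissible test variation localized around any putative nonzero point $t_{0}\in[a,b]_{\kappa}$, which calls for a case split depending on whether $t_{0}$ is dense, left-scattered, or right-scattered, all while preserving ld-continuity of $\eta^{\n}$ and the vanishing boundary data. A clean alternative is to route through the Caputo duality $\Delta\leftrightarrow\n$ of \cite{cc:dual}, transferring Lemma~\ref{lem:Dubois:Reymond:delta} directly to the nabla framework, or to quote the corresponding result from the established nabla variational literature \cite{MR2218315,Atici:HMMS,Torres}.
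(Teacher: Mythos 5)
Your overall strategy (first variation, integration by parts, fundamental lemma) is the right family of argument, but as written it has a regularity gap that the hypotheses of the theorem do not cover. The theorem assumes only $\hat{y}\in C^{1}_{ld}(\T;\R^{n})$, yet your integration by parts
$\int_{a}^{b}L_{v}\,\eta^{\n}\,\n t=\bigl[L_{v}\,\eta\bigr]_{a}^{b}-\int_{a}^{b}L_{v}^{\n}\,\eta^{\rho}\,\n t$
presupposes that the composite map $t\mapsto L_{v}\bigl(t,\hat{y}^{\rho}(t),\hat{y}^{\n}(t)\bigr)$ is already nabla differentiable. For a $C^{1}_{ld}$ extremal this is not given, and on an arbitrary time scale it cannot be deduced even from extra smoothness of $\hat{y}$ without a chain rule --- precisely the obstruction the paper emphasizes elsewhere. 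In addition, the fundamental lemma you need at the end (annihilation against $\eta^{\rho}$ implies $g\equiv 0$) is exactly the step you flag as delicate and do not establish; the lemma the paper actually supplies, Lemma~\ref{lem:Dubois:Reymond:nabla}, is of Dubois--Reymond type (annihilation against $\eta^{\n}$ implies $f\equiv c$), which your argument never invokes.

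The standard repair, and the route the cited proof takes, is to integrate by parts on the \emph{other} term: set $F(t):=\int_{a}^{t}L_{y}\bigl(\tau,\hat{y}^{\rho}(\tau),\hat{y}^{\n}(\tau)\bigr)\n\tau$, so that $F^{\n}=L_{y}$ by ld-continuity, and write
$\int_{a}^{b}L_{y}\,\eta^{\rho}\,\n t=\bigl[F\eta\bigr]_{a}^{b}-\int_{a}^{b}F\,\eta^{\n}\,\n t=-\int_{a}^{b}F\,\eta^{\n}\,\n t$.
The first variation then reads $\int_{a}^{b}\bigl(L_{v}-F\bigr)\eta^{\n}\,\n t=0$, and Lemma~\ref{lem:Dubois:Reymond:nabla} gives the integral form $L_{v}=\int_{a}^{t}L_{y}\,\n\tau+c$ on $[a,b]_{\kappa}$. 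Since the right-hand side is manifestly nabla differentiable, so is $L_{v}$ along the extremal, and nabla differentiation yields $L_{v}^{\n}=L_{y}$. This route both avoids the unjustified differentiability assumption and delivers it as a conclusion, which is why the differential form can be asserted for $C^{1}_{ld}$ minimizers. Your proposal would be salvageable only under a strengthened hypothesis such as $\hat{y}\in C^{2}_{ld}$ together with an argument for the nabla differentiability of the composite $L_{v}[\hat{y}]$, plus a proof of the $\eta^{\rho}$-version of the fundamental lemma.
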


Now we present the fundamental lemma of the nabla calculus of variations on time scales.

\begin{lemma}[See \cite{natalia:CV}]
\label{lem:Dubois:Reymond:nabla}
Let $f\in C_{ld}([a,b],\mathbb{R}^{n})$. If
$$
\int\limits_{a}^{b}f(t)\eta^{\nabla}(t)\nabla t=0
$$
for all
$\eta \in C^{1}_{ld}([a,b],\mathbb{R}^{n})$ with
$\eta(a)=\eta(b)=0$,
then $f(t)=c$ for all $t\in [a,b]_{\kappa}$, $c\in\R^{n}$.
\end{lemma}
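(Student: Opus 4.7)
The plan is to mimic the classical du Bois-Reymond construction in the nabla setting: build a test function $\eta$ directly from $f$, arranged so that $\eta^{\nabla} = f - c$ where $c$ is chosen so that the boundary conditions $\eta(a)=\eta(b)=0$ hold automatically. Substituting such an $\eta$ into the hypothesis will collapse the integrand to $\|f - c\|^{2}$, which forces the desired conclusion.

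Concretely, I would set $c := \frac{1}{b-a}\int_{a}^{b} f(\tau)\,\nabla\tau \in \mathbb{R}^{n}$, well-defined because $f$ is ld-continuous and hence nabla-integrable. Then define $\eta(t) := \int_{a}^{t} \bigl(f(\tau) - c\bigr)\,\nabla\tau$ on $[a,b]$. By the existence-of-antiderivatives theorem stated earlier for the nabla calculus, $\eta \in C^{1}_{ld}([a,b],\mathbb{R}^{n})$ with $\eta^{\nabla}(t) = f(t) - c$ for every $t\in[a,b]_{\kappa}$. By construction $\eta(a) = 0$, and $\eta(b) = \int_{a}^{b} f(\tau)\,\nabla\tau - c(b-a) = 0$, so $\eta$ is an admissible test function. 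Substituting into the hypothesis gives $\int_{a}^{b} f(t)\cdot\bigl(f(t)-c\bigr)\,\nabla t = 0$; adding the identity $c\cdot\int_{a}^{b}\bigl(f(t)-c\bigr)\,\nabla t = 0$ (true by the choice of $c$) yields $\int_{a}^{b} \|f(t)-c\|^{2}\,\nabla t = 0$.

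To finish, I would argue that a nonnegative ld-continuous function with vanishing nabla integral over $[a,b]$ must be identically zero on $[a,b]_{\kappa}$: at each left-scattered $t_{0}\in[a,b]_{\kappa}$ this is immediate from Theorem~\ref{th:int:lim:rho}, which gives $\int_{\rho(t_{0})}^{t_{0}} \|f - c\|^{2}\,\nabla\tau = \nu(t_{0})\,\|f(t_{0})-c\|^{2}$, while at left-dense points it follows from ld-continuity combined with the nonnegativity of the integrand. The main technical care, more a bookkeeping matter than a genuine obstacle, lies in this final pointwise step, since $[a,b]_{\kappa}$ may exclude certain boundary points and the conclusion does not a priori extend to all of $[a,b]$; the statement of the lemma is phrased precisely to respect this subtlety.
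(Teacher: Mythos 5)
Your construction is the standard du Bois--Reymond argument transplanted to the nabla setting --- taking $c$ to be the mean value of $f$, testing against $\eta(t)=\int_a^t\bigl(f(\tau)-c\bigr)\nabla\tau$, and concluding from $\int_a^b\|f(t)-c\|^2\,\nabla t=0$ via the left-scattered/left-dense case analysis --- which is precisely the proof given in the reference \cite{natalia:CV} that the paper cites for this lemma. The argument is correct and essentially identical to the source.
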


For a good survey on the direct calculus of variations on time scales,
covering both delta and nabla approaches, we refer the reader to \cite{Torres}.


\section{Inverse problems of the calculus of variations on time scales}
\label{chapter_5_P3+P4}

This section is devoted to inverse problems of the calculus of variations on an arbitrary time scale.
To our best knowledge, the inverse problem has not been studied before 2014
\cite{PhD:thesis:Monia,Dryl:Torres:1,MyID:291} in the framework of time scales,
in contrast with the direct problem, that establishes
dynamic equations of Euler--Lagrange type to time-scale variational problems,
that has now been investigated for ten years, since 2004 \cite{BohnerCOVOTS}.
To begin (Section~\ref{P3}) we consider an inverse extremal problem associated
with the following fundamental problem of the calculus of variations: to minimize
\begin{equation}
\label{P3:eq:funct:1}
\mathcal{L}[y]=\int\limits_{a}^{b}
L\left(t,y^{\sigma}(t),y^{\Delta}(t)\right)\Delta t
\end{equation}
subject to boundary conditions $y(a)=y_{0}(a)$, $y(b)=y_{0}(b)$
on a given time scale $\mathbb{T}$.
The Euler--Lagrange equation and the strengthened
Legendre condition are used in order to describe a general form
of a variational functional \eqref{P3:eq:funct:1} that attains an extremum at a given function $y_0$.
In the latter Section~\ref{P4}, we introduce
a completely different approach to the inverse problem of the calculus
of variations, using an integral perspective instead of the classical differential point of view
\cite{Helmholtz,Davis}. We present a sufficient condition
of self-adjointness for an integro-differential equation (Lemma~\ref{P4:lem:suff:self:adj}). Using this property,
we prove a necessary condition for an integro-differential equation
on an arbitrary time scale $\mathbb{T}$ to be an Euler--Lagrange equation (Theorem~\ref{P4:th:necess:EL:int:diff}),
related to a property of self-adjointness (Definition~\ref{P4:def:self:adj:int:diff}) of its equation of variation
(Definition~\ref{P4:def:eq:var}).


\subsection{A general form of the Lagrangian}
\label{P3}

The problem under our consideration is to find a general form of the variational functional
\begin{equation}
\label{P3:eq:var:probl:delta:sigma}
\mathcal{L}[y]=\int\limits_{a}^{b}
L\left(t,y^{\sigma}(t),y^{\Delta}(t)\right)\Delta t
\end{equation}
subject to boundary conditions $y(a)=y(b)=0$,
possessing a local minimum at zero, under the Euler--Lagrange
and the strengthened Legendre conditions.
We assume that $L(t,\cdot,\cdot)$ is a $C^{2}$-function with respect to $(y,v)$
uniformly in $t$, and $L$, $L_{y}$, $L_{v}$, $L_{vv}\in C_{rd}$
for any admissible path $y(\cdot)$.
Observe that under our assumptions, by Taylor's theorem, we may
write $L$, with the big $O$ notation, in the form
\begin{equation}
\label{P3:eq:pre:integrand}
L(t,y,v)=P(t, y) +Q(t, y) v +\frac{1}{2} R(t, y,0)v^{2} + O(v^3),
\end{equation}
where
\begin{equation}
\label{P3:eq:notation:P:Q:R}
\begin{split}
P(t, y) &= L(t, y,0),\\
Q(t, y) &= L_{v}(t, y,0),\\
R(t, y,0) &= L_{vv}(t, y,0).
\end{split}
\end{equation}
Let $R(t, y, v) = R(t, y,0) + O(v)$.
Then, one can write \eqref{P3:eq:pre:integrand} as
\begin{equation*}
L(t, y,v)=P(t, y) +Q(t, y) v +\frac{1}{2} R(t, y, v) v^{2}.
\end{equation*}
Now the idea is to find general forms of $P(t, y^{\sigma}(t))$,
$Q(t, y^{\sigma}(t))$ and $R(t, y^{\sigma}(t), y^{\Delta}(t))$ using the Euler--Lagrange \eqref{eq:EL:delta:sigma}
and the strengthened Legendre \eqref{eq:Legendre} conditions with notation \eqref{P3:eq:notation:P:Q:R}.
Then we use the Euler--Lagrange equation \eqref{eq:EL:delta:sigma} and choose an arbitrary function $P(t,y^{\sigma}(t))$ such that
$P(t,\cdot)\in C^{2}$ with respect to the second variable, uniformly in $t$,
$P$ and $P_y$ rd-continuous in $t$ for all admissible $y$.
We can write the general form of $Q$ as
\begin{equation*}
Q(t,y^{\sigma}(t))=C+\int\limits_{a}^{t}P_{y}(\tau,0)\Delta \tau
+q(t,y^{\sigma}(t))-q(t,0),
\end{equation*}
where $C\in\mathbb{R}$ and $q$ is an arbitrarily function such that $q(t,\cdot)\in C^{2}$
with respect to the second variable, uniformly in $t$, $q$ and $q_y$ are rd-continuous
in $t$ for all admissible $y$.
From the strengthened Legendre condition \eqref{eq:Legendre}, with notation \eqref{P3:eq:notation:P:Q:R},
we set
\begin{equation}
\label{P3:eq:1}
R(t,0,0)+\mu(t)\left\lbrace 2Q_{y}(t,0)+\mu(t)P_{yy}(t,0)
+\left(\mu^{\sigma}(t)\right)^{\dag}R(\sigma(t),0,0)\right\rbrace = p(t)
\end{equation}
with $p\in C_{rd}([a,b])$, $p(t)>0$ for all $t\in [a,b]^{\kappa^{2}}$, chosen arbitrary, where $\alpha^{\dag}=\frac{1}{\alpha}$
if $\alpha\in\mathbb{R}\setminus\lbrace 0 \rbrace$ and $0^{\dag}=0$.
We obtain the following theorem, which presents a general form of the integrand
$L$ for functional \eqref{P3:eq:var:probl:delta:sigma}.

\begin{theorem}
\label{P3:th:integrand:1}
Let $\mathbb{T}$ be an arbitrary time scale. If functional \eqref{P3:eq:var:probl:delta:sigma}
with boundary conditions $y(a)=y(b)=0$ attains a local minimum at $\hat{y}(t)\equiv 0$
under the strengthened Legendre condition, then its Lagrangian $L$ takes the form
\begin{equation*}
\begin{split}
&L\left(t,y^{\sigma}(t),y^{\Delta}(t)\right)
= P\left(t,y^{\sigma}(t)\right)\\
&+\left(C+\int\limits_{a}^{t}P_{y}(\tau,0)\Delta \tau
+q(t,y^{\sigma}(t))-q(t,0)\right)y^{\Delta}(t)\\
&+\Biggl(p(t)-\mu(t)\left\lbrace 2Q_{y}(t,0)+\mu(t) P_{yy}(t,0)
+\left(\mu^{\sigma}(t)\right)^{\dag}R(\sigma(t),0,0)\right\rbrace\\
&\qquad +w(t,y^{\sigma}(t),y^{\Delta}(t))
-w(t,0,0)\Biggr)\frac{(y^{\Delta}(t))^{2}}{2},
\end{split}
\end{equation*}
where $R(t,0,0)$ is a solution of equation \eqref{P3:eq:1},
$C\in\mathbb{R}$, $\alpha^{\dag}=\frac{1}{\alpha}$
if $\alpha\in\mathbb{R}\setminus\lbrace 0 \rbrace$ and $0^{\dag}=0$.
Functions $P$, $p$, $q$ and $w$ are arbitrary functions satisfying:
\begin{itemize}
\item[(i)]\
$P(t,\cdot),q(t,\cdot)\in C^{2}$ with respect to the second variable uniformly in $t$;
$P$, $P_y$, $q$, $q_y$ are rd-continuous in $t$ for all admissible $y$;
$P_{yy}(\cdot,0)$ is rd-continuous in $t$; $p\in C^{1}_{rd}$ with
$p(t)>0$ for all $t\in [a,b]^{\kappa^{2}}$;
\item[(ii)]\
$w(t,\cdot,\cdot)\in C^2$ with respect to the second and the third variable, uniformly in $t$;
$w, w_y, w_v, w_{vv}$ are rd-continuous in $t$ for all admissible $y$.
\end{itemize}
\end{theorem}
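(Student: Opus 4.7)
The plan is to reverse-engineer the Lagrangian from the two necessary optimality conditions that $L$ must satisfy at $\hat y\equiv 0$: the integral Euler--Lagrange equation (Theorem~\ref{th:EL:delta:sigma}) and the strengthened Legendre condition (Theorem~\ref{th:Legendre}). The key observation is that, evaluated along the extremal $\hat y\equiv 0$, both conditions are only pointwise conditions on the values of $L$ and its partial derivatives at $(t,0,0)$. They therefore leave the behaviour of $L$ off the zero section entirely unconstrained, and this residual freedom is precisely what will be parametrized by the arbitrary functions $q$ and $w$ appearing in the statement.

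First, using the assumed $C^{2}$ regularity in $(y,v)$ uniformly in $t$, I would Taylor expand in $v$ around $v=0$ and write, as suggested by \eqref{P3:eq:pre:integrand},
\[
L(t,y,v) = P(t,y) + Q(t,y)\,v + \tfrac{1}{2}\, R(t,y,v)\, v^{2},
\]
where $P(t,y)=L(t,y,0)$, $Q(t,y)=L_{v}(t,y,0)$, and $R(t,y,v)$ is defined so that $R(t,y,0)=L_{vv}(t,y,0)$, the cubic and higher-order $v$-terms being absorbed into $R$. The rd-continuous $C^{2}$ hypothesis on $L$ transfers to $P$, $Q$, $R$ in the sense required by (i)--(ii).

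Next I would apply Theorem~\ref{th:EL:delta:sigma} along $\hat y\equiv 0$: since $\hat y^{\sigma}(t)=0$ and $\hat y^{\Delta}(t)=0$, identity \eqref{eq:EL:delta:sigma} collapses to
\[
Q(t,0) = C + \int_{a}^{t} P_{y}(\tau,0)\,\Delta\tau
\]
for some constant $C\in\mathbb{R}$. This pins down $Q$ only along $y=0$, so I would absorb the off-zero-section part into an arbitrary $q(t,\cdot)\in C^{2}$ via
\[
Q(t,y^{\sigma}(t)) = Q(t,0) + q(t,y^{\sigma}(t)) - q(t,0).
\]
Similarly, Theorem~\ref{th:Legendre} at $\hat y\equiv 0$, with the identifications $A(t)=R(t,0,0)$, $B(t)=P_{yy}(t,0)$, $C(t)=Q_{y}(t,0)$, is exactly \eqref{P3:eq:1} once the (positive, rd-continuous) left-hand side is called $p(t)$; so $R(t,0,0)$ is then determined as a solution of \eqref{P3:eq:1} governed by the free datum $p$, while $R(t,y^{\sigma},y^{\Delta})$ is unconstrained off the zero section, which I parametrize by
\[
R(t,y^{\sigma}(t),y^{\Delta}(t)) = R(t,0,0) + w(t,y^{\sigma}(t),y^{\Delta}(t)) - w(t,0,0)
\]
with an arbitrary $w$ of the stated regularity. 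Substituting these expressions for $P$, $Q$, and $R$ back into $L=P+Qv+\tfrac{1}{2}Rv^{2}$ with $v=y^{\Delta}(t)$ yields precisely the formula displayed in the theorem.

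The main obstacle I anticipate is the bookkeeping in the first step: one must verify that absorbing the cubic remainder of Taylor's theorem into $R(t,y,v)$ preserves rd-continuity in $t$ and $C^{2}$ regularity in $(y,v)$ uniformly in $t$, so that $R$ matches the hypotheses imposed on $w$. A minor related subtlety is to note that \eqref{P3:eq:1}, read as a relation between $R(t,0,0)$ and $R(\sigma(t),0,0)$, is consistent: at right-dense points it reduces to $R(t,0,0)=p(t)$, and at right-scattered points it is a forward-in-$\sigma$ recursion in $R(\cdot,0,0)$, which is well-posed once the existing value $R(t,0,0)=L_{vv}(t,0,0)$ of the given Lagrangian is inserted. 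Everything else is a direct identification of coefficients.
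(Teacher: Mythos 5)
Your proposal is correct and follows essentially the same route the paper takes: the Taylor decomposition $L=P+Qv+\tfrac{1}{2}Rv^{2}$ of \eqref{P3:eq:pre:integrand}, the integral Euler--Lagrange equation \eqref{eq:EL:delta:sigma} evaluated at $\hat{y}\equiv 0$ to fix $Q(t,0)$, the strengthened Legendre condition to produce \eqref{P3:eq:1} and hence $p(t)$, and the parametrization of the off-zero-section freedom by $q$ and $w$. The identifications $A(t)=R(t,0,0)$, $B(t)=P_{yy}(t,0)$, $C(t)=Q_{y}(t,0)$ and the substitution back into $L$ are exactly as in the paper's construction preceding the theorem.
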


\begin{proof}
See \cite{Dryl:Torres:1}.
\end{proof}

Now we consider the general situation when the variational problem consists in minimizing
\eqref{P3:eq:var:probl:delta:sigma} subject to arbitrary boundary conditions $y(a)=y_{0}(a)$ and $y(b)=y_{0}(b)$,
for a certain given function $y_{0}\in C_{rd}^{2}([a,b])$.

\begin{theorem}
\label{P3:th:integrand:2}
Let $\mathbb{T}$ be an arbitrary time scale. If the variational functional
\eqref{P3:eq:var:probl:delta:sigma} with boundary conditions $y(a)=y_{0}(a)$, $y(b)=y_{0}(b)$,
attains a local minimum for a certain given function
$y_{0}(\cdot)\in C^{2}_{rd}([a,b])$
under the strengthened Legendre condition, then its Lagrangian $L$ has the form
\begin{equation*}
\begin{split}
&L\left(t,y^{\sigma}(t),y^{\Delta}(t)\right)
= P\left(t,y^{\sigma}(t)-y^{\sigma}_{0}(t)\right)
+ \left(y^{\Delta}(t)-y_{0}^{\Delta}(t)\right)\\
&\times \left(C+\int\limits_{a}^{t}P_{y}\left(\tau,-y_{0}^{\sigma}(\tau)\right)\Delta \tau
+q\left(t,y^{\sigma}(t)-y^{\sigma}_{0}(t)\right)
-q\left(t,-y_{0}^{\sigma}(t)\right)\right)
+\frac{1}{2}\Biggl(p(t)\\
& -\mu(t) \left\lbrace 2Q_{y}(t,-y_{0}^{\sigma}(t))+\mu(t) P_{yy}(t,-y_{0}^{\sigma}(t))
+\left(\mu^{\sigma}(t)\right)^{\dag}R(\sigma(t),-y_{0}^{\sigma}(t),-y_{0}^{\Delta}(t))\right\rbrace\\
&+w(t,y^{\sigma}(t)-y^{\sigma}_{0}(t),y^{\Delta}(t)-y_{0}^{\Delta}(t))
-w\left(t,-y_{0}^{\sigma}(t),-y_{0}^{\Delta}(t)\right)\Biggr)
\left(y^{\Delta}(t)-y_{0}^{\Delta}(t)\right)^{2},
\end{split}
\end{equation*}
where $C\in\mathbb{R}$ and functions $P$, $p$, $q$, $w$
satisfy conditions (i) and (ii) of Theorem~\ref{P3:th:integrand:1}.
\end{theorem}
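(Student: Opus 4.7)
The natural strategy is to reduce Theorem~\ref{P3:th:integrand:2} to the already established Theorem~\ref{P3:th:integrand:1} via a change of dependent variable that converts the general boundary conditions $y(a)=y_0(a)$, $y(b)=y_0(b)$ into homogeneous ones. Set $z(t):=y(t)-y_0(t)$, so that $z(a)=z(b)=0$, and introduce the transformed Lagrangian
\begin{equation*}
\tilde{L}(t,z,v)\;:=\;L\bigl(t,\;z+y_0^{\sigma}(t),\;v+y_0^{\Delta}(t)\bigr).
\end{equation*}
Since $z^{\sigma}(t)=y^{\sigma}(t)-y_0^{\sigma}(t)$ and $z^{\Delta}(t)=y^{\Delta}(t)-y_0^{\Delta}(t)$, we have the pointwise identity $\tilde{L}\bigl(t,z^{\sigma}(t),z^{\Delta}(t)\bigr)=L\bigl(t,y^{\sigma}(t),y^{\Delta}(t)\bigr)$, hence the associated functionals coincide and $y_0$ is a local minimizer of \eqref{P3:eq:var:probl:delta:sigma} with the prescribed boundary data if and only if $\hat z\equiv 0$ is a local minimizer of $\tilde{\mathcal{L}}$ over admissible $z$ with $z(a)=z(b)=0$.

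Next I would verify that the hypotheses of Theorem~\ref{P3:th:integrand:1} transfer from $L$ to $\tilde{L}$. The assumption $y_0\in C^{2}_{rd}([a,b])$ guarantees that the shifts $z\mapsto z+y_0^{\sigma}(t)$ and $v\mapsto v+y_0^{\Delta}(t)$ preserve $C^{2}$-smoothness in $(z,v)$ uniformly in $t$, and that $\tilde{L},\tilde{L}_y,\tilde{L}_v,\tilde{L}_{vv}$ remain rd-continuous in $t$ along admissible paths. Crucially, $\tilde{L}_{vv}(t,z,v)=L_{vv}\bigl(t,z+y_0^{\sigma}(t),v+y_0^{\Delta}(t)\bigr)$, so the strengthened Legendre condition for $\tilde{L}$ evaluated at $\hat z\equiv 0$ is precisely the strengthened Legendre condition for $L$ evaluated at $\hat y\equiv y_0$. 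Thus Theorem~\ref{P3:th:integrand:1} applies to $\tilde{L}$ and yields a representation
\begin{equation*}
\tilde{L}(t,z^{\sigma},z^{\Delta})
=\tilde{P}(t,z^{\sigma})
+\Bigl(C+\!\!\int_a^t\!\tilde{P}_y(\tau,0)\,\Delta\tau+\tilde{q}(t,z^{\sigma})-\tilde{q}(t,0)\Bigr)z^{\Delta}
+\tfrac12\,\Phi(t,z^{\sigma},z^{\Delta})\,(z^{\Delta})^{2},
\end{equation*}
where $\Phi$ denotes the full quadratic coefficient from Theorem~\ref{P3:th:integrand:1} (involving $\tilde p,\tilde Q_y,\tilde P_{yy},\tilde R$, and $\tilde w$) and the functions $\tilde P,\tilde q,\tilde p,\tilde w$ satisfy the regularity conditions (i)--(ii).

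Finally, I would substitute $z^{\sigma}=y^{\sigma}-y_0^{\sigma}$ and $z^{\Delta}=y^{\Delta}-y_0^{\Delta}$ in this representation and relabel the resulting arbitrary functions as $P,q,p,w$ in order to reach the form asserted in Theorem~\ref{P3:th:integrand:2}. Because $\tilde P(t,z)=\tilde L(t,z,0)=L(t,z+y_0^{\sigma}(t),y_0^{\Delta}(t))$ (and analogously for $\tilde Q,\tilde R$), the evaluation points appearing in the statement (arguments such as $-y_0^{\sigma}(t)$ and $-y_0^{\Delta}(t)$) correspond to evaluating the back-substituted functions at $y\equiv 0$, which is the image under the shift of the base point $z\equiv 0$ used in Theorem~\ref{P3:th:integrand:1}. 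The main obstacle I expect is precisely this bookkeeping step: tracking how the shift interacts with the arguments of $P_y$, $Q_y$, $P_{yy}$, $R$, and $w$ so that after relabeling the arbitrary functions the stated formula is reproduced verbatim; the Euler--Lagrange and strengthened Legendre inputs themselves transfer mechanically, but aligning the parameterizations of the arbitrary $P,q,p,w$ with the evaluation conventions used in the statement requires careful algebra rather than any new analytic idea.
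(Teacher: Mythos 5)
Your reduction $z:=y-y_{0}$, $\tilde{L}(t,z,v):=L\bigl(t,z+y_{0}^{\sigma}(t),v+y_{0}^{\Delta}(t)\bigr)$ is exactly the intended route (the paper proves this theorem by citing \cite{Dryl:Torres:1}, where the general case is obtained from the homogeneous one in precisely this way), and your check that the hypotheses and the strengthened Legendre condition transfer to $\tilde{L}$ is fine. The gap is in your final identification step. You assert that the base point $z\equiv 0$ of Theorem~\ref{P3:th:integrand:1} corresponds, after the shift, to $y\equiv 0$, and that this is why the arguments $-y_{0}^{\sigma}(t)$, $-y_{0}^{\Delta}(t)$ appear in the target formula. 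That is backwards: under $z=y-y_{0}$ the base point $z\equiv 0$ corresponds to $y\equiv y_{0}$, while $y\equiv 0$ corresponds to $z\equiv -y_{0}$. Hence back-substituting $z^{\sigma}=y^{\sigma}-y_{0}^{\sigma}$ and $z^{\Delta}=y^{\Delta}-y_{0}^{\Delta}$ into the output of Theorem~\ref{P3:th:integrand:1} produces reference evaluations $\tilde{P}_{y}(\tau,0)$, $\tilde{q}(t,0)$, $\tilde{Q}_{y}(t,0)$, $\tilde{R}(\sigma(t),0,0)$, $\tilde{w}(t,0,0)$, i.e.\ second and third arguments equal to $0=y_{0}^{\sigma}-y_{0}^{\sigma}$ and $0=y_{0}^{\Delta}-y_{0}^{\Delta}$, and not the arguments $-y_{0}^{\sigma}(t)$, $-y_{0}^{\Delta}(t)$ that appear in the statement.

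This is not a harmless relabelling absorbed by the arbitrariness of $P,q,w$, because $P$ is pinned down by the formula itself: $P(t,u)$ must be the value of $L$ at $y^{\Delta}=y_{0}^{\Delta}$, so once $P$ is fixed the two choices of reference curve constrain the linear coefficient along genuinely different curves. Concretely, on $\T=\R$ take $L(t,y,v)=\frac{1}{2}v^{2}+\frac{1}{2}y^{2}$ on $[0,1]$ with $y_{0}(t)=\sinh t$ (a convex problem, so $y_{0}$ is a minimizer and the Legendre condition holds). Then necessarily $P(t,u)=\frac{1}{2}\bigl(u+y_{0}(t)\bigr)^{2}+\frac{1}{2}\bigl(y_{0}'(t)\bigr)^{2}$, so $P_{y}\bigl(\tau,-y_{0}(\tau)\bigr)=0$ and the linear coefficient of the printed formula, evaluated at $y\equiv 0$, collapses to the constant $C$; but the true linear coefficient is $L_{v}|_{v=y_{0}'}=y_{0}'(t)=\cosh t$, which is not constant. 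The representation your reduction actually yields (with $P_{y}(\tau,0)$, $q(t,0)$, $w(t,0,0)$, etc.) does handle this example, since $C+\int_{0}^{t}P_{y}(\tau,0)\,d\tau=C+\cosh t-1$. So you should either prove the corrected formula with reference arguments $0$ rather than $-y_{0}^{\sigma}$, $-y_{0}^{\Delta}$ --- which your argument does deliver once the last step is fixed --- or supply a separate argument for the printed form; the base-point discrepancy cannot be repaired by redefining $q$ and $w$.
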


\begin{proof}
See \cite{Dryl:Torres:1}.
\end{proof}

For the classical situation $\mathbb{T}=\mathbb{R}$,
Theorem~\ref{P3:th:integrand:2} gives a recent result
of \cite{orlov,MR2907362}.

\begin{corollary}[Theorem~4 of \cite{orlov}]
\label{P3:cor:R}
If the variational functional
$$
\mathcal{L}[y]=\int\limits_{a}^{b} L(t,y(t),y'(t))dt
$$
attains a local minimum at $y_{0}(\cdot)\in C^{2}[a,b]$
when subject to boundary conditions $y(a)=y_{0}(a)$ and $y(b)=y_{0}(b)$
and the classical strengthened Legendre condition
$$
R(t,y_{0}(t),y'_{0}(t))>0, \quad t\in[a,b],
$$
then its Lagrangian $L$ has the form
\begin{equation*}
\begin{split}
&L(t,y(t),y'(t))=P(t,y(t)-y_{0}(t))\\
&
+(y'(t)-y'_{0}(t))\left(C+\int\limits_{a}^{t}
P_{y}(\tau,-y_{0}(\tau))d\tau+q(t,y(t)-y_{0}(t))-q(t,-y_{0}(t))\right)\\
&
+\frac{1}{2}\left(p(t)+w(t,y(t)-y_{0}(t),y'(t)-y'_{0}(t))
-w(t,-y_{0}(t),-y'_{0}(t))\right)(y'(t)-y'_{0}(t))^{2},
\end{split}
\end{equation*}
where $C\in\mathbb{R}$.
\end{corollary}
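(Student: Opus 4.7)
The plan is to obtain this corollary as a direct specialization of Theorem~\ref{P3:th:integrand:2} to the time scale $\mathbb{T}=\mathbb{R}$. First I would recall the basic simplifications that occur in this setting: the forward jump operator reduces to the identity ($\sigma(t)=t$), the graininess function vanishes identically ($\mu(t)\equiv 0$), and the delta derivative coincides with the classical derivative ($y^{\Delta}=y'$). Consequently, $y^{\sigma}(t)=y(t)$, $y_{0}^{\sigma}(t)=y_{0}(t)$, and the rd-continuity assumptions on $P, q, p, w$ reduce to ordinary continuity requirements in the classical sense.

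Next I would substitute these identifications into the general Lagrangian produced by Theorem~\ref{P3:th:integrand:2}. The first summand $P(t,y^{\sigma}(t)-y_{0}^{\sigma}(t))$ immediately becomes $P(t,y(t)-y_{0}(t))$. The linear term in $(y^{\Delta}-y_{0}^{\Delta})$ becomes
\begin{equation*}
(y'(t)-y_{0}'(t))\left(C+\int\limits_{a}^{t}P_{y}(\tau,-y_{0}(\tau))\,d\tau+q(t,y(t)-y_{0}(t))-q(t,-y_{0}(t))\right),
\end{equation*}
as desired. The crucial observation is that in the quadratic coefficient the entire bracketed expression
\begin{equation*}
\mu(t)\left\{2Q_{y}(t,-y_{0}^{\sigma}(t))+\mu(t)P_{yy}(t,-y_{0}^{\sigma}(t))+(\mu^{\sigma}(t))^{\dag}R(\sigma(t),-y_{0}^{\sigma}(t),-y_{0}^{\Delta}(t))\right\}
\end{equation*}
vanishes identically, since it carries an overall factor $\mu(t)\equiv 0$ (one need not worry about the $(\mu^{\sigma}(t))^{\dag}$ factor, which is defined to be $0$ at $\mu^{\sigma}(t)=0$). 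Hence the quadratic term collapses to $\tfrac12(p(t)+w(t,y(t)-y_{0}(t),y'(t)-y_{0}'(t))-w(t,-y_{0}(t),-y_{0}'(t)))(y'(t)-y_{0}'(t))^{2}$, matching the statement.

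Finally, I would check the Legendre-type hypothesis: equation~\eqref{P3:eq:1}, after the same substitution $\mu\equiv 0$, reduces to $R(t,0,0)=p(t)>0$, which in the shifted variables translates exactly into the classical strengthened Legendre condition $R(t,y_{0}(t),y_{0}'(t))>0$ for $t\in[a,b]$. Putting everything together yields precisely the formula stated in Corollary~\ref{P3:cor:R}. There is no real obstacle here beyond careful bookkeeping, since the work has already been done in Theorem~\ref{P3:th:integrand:2}; the only point requiring a moment of attention is the convention $0^{\dag}=0$, which ensures that the $(\mu^{\sigma})^{\dag}$-factor does not produce an indeterminate form when $\mu$ vanishes.
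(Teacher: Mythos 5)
Your proposal is correct and is exactly the route the paper takes: Corollary~\ref{P3:cor:R} is obtained by specializing Theorem~\ref{P3:th:integrand:2} to $\mathbb{T}=\mathbb{R}$, where $\sigma(t)=t$, $\mu(t)\equiv 0$ and $y^{\Delta}=y'$, so the $\mu$-bracket in the quadratic coefficient vanishes and \eqref{P3:eq:1} collapses to $R=p>0$, i.e.\ the classical strengthened Legendre condition. The bookkeeping you give (including the $0^{\dag}=0$ convention) matches the intended derivation.
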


In the particular case of an isolated time scale, where $\mu(t) \neq 0$ for all $t\in\mathbb{T}$,
we get the following corollary.

\begin{corollary}
\label{P3:cor:isolated}
Let $\mathbb{T}$ be an isolated time scale. If functional \eqref{P3:eq:var:probl:delta:sigma}
subject to the boundary conditions $y(a)=y(b)=0$ attains a local minimum at
$\hat{y}(t) \equiv 0$ under the strengthened Legendre condition,
then the Lagrangian $L$ has the form
\begin{equation*}
\begin{split}
&L\left(t,y^{\sigma}(t),y^{\Delta}(t)\right)
= P\left(t,y^{\sigma}(t)\right)\\
&+\left(C+\int\limits_{a}^{t}P_{y}(\tau,0)\Delta \tau
+q(t,y^{\sigma}(t))-q(t,0)\right)y^{\Delta}(t)\\
&+\left(e_{r}(t,a)R_{0}
+\int\limits_{a}^{t}e_{r}(t,\sigma(\tau))s(\tau)\Delta\tau
+w(t,y^{\sigma}(t),y^{\Delta}(t))
-w(t,0,0)\right)\frac{(y^{\Delta}(t))^{2}}{2},
\end{split}
\end{equation*}
where $C,R_{0}\in\mathbb{R}$ and $r(t)$ and $s(t)$ are given by
\begin{equation}
\label{P3:eq:funct:r:s}
r(t) := -\frac{1+\mu(t)(\mu^{\sigma}(t))^{\dag}}{\mu^{2}(t)(\mu^{\sigma}(t))^{\dag}},
\quad s(t) := \frac{p(t)
-\mu(t)[2Q_{y}(t,0)+\mu(t)P_{yy}(t,0)]}{\mu^{2}(t)(\mu^{\sigma}(t))^{\dag}}
\end{equation}
with $\alpha^{\dag} = \frac{1}{\alpha}$ if $\alpha \in\mathbb{R}\setminus\lbrace 0 \rbrace$ and $0^{\dag}=0$,
and functions $P$, $p$, $q$, $w$ satisfy assumptions of Theorem~\ref{P3:th:integrand:1}.
\end{corollary}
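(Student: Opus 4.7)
The plan is to specialise Theorem~\ref{P3:th:integrand:1} to the isolated case, and to convert the single algebraic constraint \eqref{P3:eq:1} on $R(\cdot,0,0)$ into a closed-form expression. In Theorem~\ref{P3:th:integrand:1} the only ingredient of the Lagrangian that is not a free datum is the function $\tilde R(t) := R(t,0,0)$; all other pieces ($P$, $q$, $w$, $p$, $C$) may be chosen freely subject to (i)--(ii). So it suffices to solve \eqref{P3:eq:1} for $\tilde R$ on an isolated time scale and substitute the result back into Theorem~\ref{P3:th:integrand:1}.

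The crucial observation is that on an isolated time scale $\mu(t)\neq 0$ and $\mu^{\sigma}(t)\neq 0$ for every $t$, hence $(\mu^{\sigma}(t))^{\dag}=1/\mu^{\sigma}(t)$. I would first invoke item~4 of Theorem~\ref{th:differ:delta}, namely $\tilde R^{\sigma}(t)=\tilde R(t)+\mu(t)\tilde R^{\Delta}(t)$, so that the ``shifted'' term $R(\sigma(t),0,0)$ becomes $\tilde R(t)+\mu(t)\tilde R^{\Delta}(t)$. After multiplying through by $\mu^{\sigma}(t)/\mu^{2}(t)$ and collecting terms, the algebraic identity \eqref{P3:eq:1} reduces to the first-order linear dynamic equation
\[
\tilde R^{\Delta}(t) \;=\; r(t)\,\tilde R(t) + s(t),
\]
with $r$ and $s$ precisely the coefficients in \eqref{P3:eq:funct:r:s}. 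A quick computation gives $1+\mu(t) r(t)=-\mu^{\sigma}(t)/\mu(t)\neq 0$, so $r$ is regressive and the time-scale exponential $e_{r}(\cdot,a)$ is well defined.

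Next I would apply the variation of constants formula for first-order linear delta dynamic equations. Treating the initial value $\tilde R(a)=:R_{0}\in\mathbb{R}$ as a free parameter, the general solution is
\[
\tilde R(t) \;=\; e_{r}(t,a)\,R_{0} + \int_{a}^{t} e_{r}(t,\sigma(\tau))\,s(\tau)\,\Delta\tau.
\]
Substituting this expression for $R(t,0,0)$ into the coefficient $p(t)-\mu(t)\{2Q_{y}(t,0)+\mu(t)P_{yy}(t,0)+(\mu^{\sigma}(t))^{\dag}R(\sigma(t),0,0)\}$ of $(y^{\Delta}(t))^{2}/2$ in Theorem~\ref{P3:th:integrand:1}, and observing that \eqref{P3:eq:1} identifies this whole coefficient with $\tilde R(t)$ itself, produces precisely the Lagrangian claimed in the corollary. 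The remaining summands involving $P$, $q$ and $w$ are inherited verbatim from Theorem~\ref{P3:th:integrand:1}.

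The only delicate part is the bookkeeping that turns \eqref{P3:eq:1} into the linear dynamic equation with exactly the claimed $r$ and $s$, since one must track the factors $\mu(t)$ and $\mu^{\sigma}(t)$ in the denominators carefully and verify regressivity; once this normalisation is done, the remainder of the argument is a direct appeal to the standard theory of first-order linear dynamic equations and the generalized exponential $e_{r}$ on time scales.
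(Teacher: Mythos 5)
Your proposal is correct and follows essentially the same route as the paper's argument: on an isolated time scale one rewrites \eqref{P3:eq:1} using $R^{\sigma}(t)=R(t)+\mu(t)R^{\Delta}(t)$ to obtain the regressive first-order linear dynamic equation $\tilde R^{\Delta}=r\tilde R+s$ with exactly the coefficients \eqref{P3:eq:funct:r:s}, solves it by variation of constants, and substitutes back into Theorem~\ref{P3:th:integrand:1}, noting that \eqref{P3:eq:1} identifies the bracketed coefficient of $(y^{\Delta})^{2}/2$ with $R(t,0,0)$ itself. The verification that $1+\mu(t)r(t)=-\mu^{\sigma}(t)/\mu(t)\neq 0$ is the right regressivity check, so $e_{r}$ is well defined and the argument is complete.
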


Based on Corollary~\ref{P3:cor:isolated}, we present the form of Lagrangian $L$
in the periodic time scale $\mathbb{T}=h\mathbb{Z}$.

\begin{example}
\label{P3:ex:hZ}
Let $\mathbb{T}=h\mathbb{Z}$, $h > 0$,
and $a, b\in h\mathbb{Z}$ with $a<b$.
Then $\mu(t) \equiv h$.
Consider the variational functional
\begin{equation}
\label{P3:eq:funct:hZ}
\mathcal{L}[y]=h\sum_{k=\frac{a}{h}}^{\frac{b}{h}-1}
L\left(kh,y(kh+h),\Delta_h y(kh)\right)
\end{equation}
subject to the boundary conditions $y(a)=y(b)=0$, which
attains a local minimum at $\hat{y}(kh)\equiv 0$
under the strengthened Legendre condition
$$
R(kh,0,0)+2hQ_{y}(kh,0)+h^{2}P_{yy}(kh,0)+R(kh+h,0,0)>0,
$$
$kh\in [a,b-2h]\cap h\mathbb{Z}$.
Functions $r(t)$ and $s(t)$  (see \eqref{P3:eq:funct:r:s}) have the following form:
\begin{equation*}
r(t) \equiv -\frac{2}{h},
\quad s(t)=\frac{p(t)}{h}
-\left(2Q_{y}(t,0) + h P_{yy}(t,0)\right).
\end{equation*}
Hence,
$$
\int\limits_{a}^{t}P_{y}(\tau,0)\Delta \tau
=h\sum\limits_{i=\frac{a}{h}}^{\frac{t}{h}-1}P_{y}(ih,0),
$$
\begin{equation*}
\int\limits_{a}^{t}e_{r}(t,\sigma(\tau))s(\tau)\Delta \tau
=\sum_{i=\frac{a}{h}}^{\frac{t}{h}-1}(-1)^{\frac{t}{h}-i-1}
\left(p(ih)-2hQ_{y}(ih,0)-h^{2}P_{yy}(ih,0)\right).
\end{equation*}
Thus, the Lagrangian $L$ of the variational functional \eqref{P3:eq:funct:hZ}
on $\mathbb{T}=h\mathbb{Z}$ has the form
\begin{equation*}
\begin{split}
L&\left(kh,y(kh+h),\Delta_h y(kh)\right)=P\left(kh,y(kh+h)\right)\\
&+\left(C+\sum\limits_{i=\frac{a}{h}}^{k-1}hP_{y}(ih,0)
+q(kh,y(kh+h))-q(kh,0)\right)\Delta_h y(kh)\\
&+\frac{1}{2}\Biggl((-1)^{k-\frac{a}{h}}R_{0}
+\sum_{i=\frac{a}{h}}^{k-1}(-1)^{k-i-1}
\left(p(i h)-2hQ_{y}(ih,0)-h^{2}P_{yy}(ih,0)\right)\\
&\qquad +w(kh,y(kh+h),\Delta_h y(kh))-w(kh,0,0)\Biggr)
\left(\Delta_h y(kh)\right)^{2},
\end{split}
\end{equation*}
where functions $P$, $p$, $q$, $w$ are arbitrary
but satisfy assumptions of Theorem~\ref{P3:th:integrand:1}.
\end{example}


\subsection{Necessary condition for an Euler--Lagrange equation}
\label{P4}

This section provides a necessary condition for an integro-differential equation
on an arbitrary time scale to be an Euler--Lagrange equation (Theorem~\ref{P4:th:necess:EL:int:diff}).
For that the notions of self-adjointness (Definition~\ref{P4:def:self:adj:int:diff})
and equation of variation (Definition~\ref{P4:def:eq:var}) are essential.

\begin{definition}[First order self-adjoint integro-differential equation]
\label{P4:def:self:adj:int:diff}
A first order integro-differential dynamic equation is said to be \emph{self-adjoint} if it has the form
\begin{equation}
\label{P4:eq:self:adj:int:diff}
Lu(t)=const, \hbox{ where } Lu(t)=p(t)u^{\Delta}(t)+\int\limits_{t_{0}}^{t}\left[r(s)u^{\sigma}(s)\right]\Delta s
\end{equation}
with $p,r\in C_{rd}$, $p\neq 0$ for all $t\in\mathbb{T}$ and $t_{0}\in\T$.
\end{definition}

Let $\mathbb{D}$ be the set of all functions $y:\mathbb{T}\rightarrow \mathbb{R}$ such that
$y^{\Delta}:\mathbb{T}^{\kappa}\rightarrow\mathbb{R}$ is continuous.
A function $y\in\mathbb{D}$ is said to be a solution of \eqref{P4:eq:self:adj:int:diff}
provided $Ly(t)=const$  holds for all $t\in\mathbb{T^{\kappa}}$.
For simplicity, we use the operators $[\cdot]$ and $\langle \cdot\rangle$ defined as
\begin{equation}
\label{eq:2convenientoper:++}
[y](t):=(t, y^{\s}(t), y^{\Delta}(t)), \quad\quad \langle y\rangle (t):=(t, y^{\s}(t), y^{\Delta}(t), y^{\Delta\Delta}(t)),
\end{equation}
and partial derivatives of function $(t,y,v,z)\lra L(t,y,v,z)$ are denoted by
$\partial_{2}L=L_{y}$, $\partial_{3}L=L_{v}$, $\partial_{4}L=L_{z}$.

\bigskip
\bigskip

\begin{definition}[Equation of variation]
\label{P4:def:eq:var}
Let
\begin{equation}
\label{P4:eq:int:diff:1}
H[y](t)+\int\limits_{t_{0}}^{t}G[y](s)\Delta s = const
\end{equation}
be an integro-differential equation on time scales with
$H_{v}\neq 0$, $t\lra F_{y}[y](t)$, $t\lra F_{v}[y](t)
\in C_{rd}(\T,\R)$ along every curve $y$, where $F\in\lb G,H\rb$.
The \emph{equation of variation}
associated with \eqref{P4:eq:int:diff:1} is given by
\begin{equation}
\label{P4:eq:eq:var}
H_{y}[u](t)u^{\sigma}(t)+H_{v}[u](t) u^{\Delta}(t)
+\int\limits_{t_{0}}^{t}G_{y}[u](s)u^{\sigma}(s)+G_{v}[u](s) u^{\Delta}(s)\Delta s=0.
\end{equation}
\end{definition}

\begin{lemma}[Sufficient condition of self-adjointness]
\label{P4:lem:suff:self:adj}
Let \eqref{P4:eq:int:diff:1} be a given integro-differential equation. If
\begin{equation*}
H_{y}[y](t)+G_{v}[y](t)=0,
\end{equation*}
then its equation of variation \eqref{P4:eq:eq:var} is self-adjoint.
\end{lemma}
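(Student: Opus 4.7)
The plan is to rewrite the equation of variation \eqref{P4:eq:eq:var} under the hypothesis $H_y[u](t)=-G_v[u](t)$ so that it matches the template in Definition~\ref{P4:def:self:adj:int:diff}, namely
$$
p(t)u^{\Delta}(t)+\int_{t_0}^{t}\bigl[r(s)\,u^{\sigma}(s)\bigr]\Delta s = \textrm{const}.
$$
The two ``offending'' terms of \eqref{P4:eq:eq:var}, relative to this template, are the pointwise term $H_y[u](t)\,u^{\sigma}(t)$ (wrong factor $u^{\sigma}$ instead of $u^{\Delta}$) and the integral term $\int_{t_0}^{t}G_v[u](s)\,u^{\Delta}(s)\Delta s$ (wrong factor $u^{\Delta}$ instead of $u^{\sigma}$). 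The idea is that delta integration by parts converts one into the other, and the hypothesis $H_y+G_v=0$ makes the leftover contributions cancel.

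First I would substitute $H_y[u](t)=-G_v[u](t)$ into \eqref{P4:eq:eq:var}, so the pointwise piece becomes $-G_v[u](t)\,u^{\sigma}(t)+H_v[u](t)\,u^{\Delta}(t)$. Next I would apply the delta integration by parts formula of Theorem~\ref{th:int:prop:delta}(6) to $\int_{t_0}^{t}G_v[u](s)\,u^{\Delta}(s)\Delta s$, obtaining
$$
\int_{t_0}^{t}G_v[u](s)\,u^{\Delta}(s)\Delta s
= G_v[u](t)\,u(t)-G_v[u](t_0)\,u(t_0)-\int_{t_0}^{t}\bigl(G_v[u]\bigr)^{\Delta}(s)\,u^{\sigma}(s)\Delta s.
$$
Then I would combine the pointwise pieces using the identity $u^{\sigma}(t)=u(t)+\mu(t)u^{\Delta}(t)$ from Theorem~\ref{th:differ:delta}(4), which gives
$$
-G_v[u](t)\,u^{\sigma}(t)+G_v[u](t)\,u(t) = -\mu(t)\,G_v[u](t)\,u^{\Delta}(t).
$$

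Collecting everything, \eqref{P4:eq:eq:var} becomes
$$
\bigl[H_v[u](t)-\mu(t)\,G_v[u](t)\bigr]u^{\Delta}(t)
+\int_{t_0}^{t}\bigl[G_y[u](s)-\bigl(G_v[u]\bigr)^{\Delta}(s)\bigr]u^{\sigma}(s)\,\Delta s
= G_v[u](t_0)\,u(t_0),
$$
which is exactly \eqref{P4:eq:self:adj:int:diff} with $p(t):=H_v[u](t)-\mu(t)G_v[u](t)$, $r(s):=G_y[u](s)-\bigl(G_v[u]\bigr)^{\Delta}(s)$, and the right-hand side a constant depending only on the initial data at $t_0$. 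The final step is to remark that $p,r$ are rd-continuous by the hypotheses placed on $F_y[y]$ and $F_v[y]$ in Definition~\ref{P4:def:eq:var}, so all the requirements of Definition~\ref{P4:def:self:adj:int:diff} are met.

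The only real obstacle is the bookkeeping in the integration by parts step: one must correctly keep track of the boundary term $G_v[u](t)u(t)$ and use $u(t)=u^{\sigma}(t)-\mu(t)u^{\Delta}(t)$ in precisely the right direction so that the $u^{\sigma}(t)$ terms cancel (via the hypothesis $H_y=-G_v$) and the leftover term proportional to $\mu(t)u^{\Delta}(t)$ gets absorbed into the coefficient of $u^{\Delta}(t)$. Verifying $p\not\equiv 0$ is a mild point that follows from the standing hypothesis $H_v\neq 0$ together with the assumption that the graininess-corrected coefficient does not vanish, which is implicit in the setting.
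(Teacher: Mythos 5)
Your proof is correct and follows essentially the same route as the paper's: integrate the term $\int_{t_0}^{t}G_v[u](s)u^{\Delta}(s)\Delta s$ by parts, use $u^{\sigma}=u+\mu u^{\Delta}$ together with the hypothesis $H_y[u]=-G_v[u]$ to cancel the offending $u^{\sigma}$ terms, and read off $p(t)=H_v[u](t)-\mu(t)G_v[u](t)$ and $r(s)=G_y[u](s)-\left(G_v[u]\right)^{\Delta}(s)$ with the constant coming from the boundary term at $t_0$. The only caveats (shared with the original argument) are that the integration by parts tacitly requires $s\mapsto G_v[u](s)$ to be delta differentiable with rd-continuous derivative, which is slightly more than the rd-continuity assumed in Definition~\ref{P4:def:eq:var}, and that $p(t)\neq 0$ is an extra nondegeneracy condition not literally implied by $H_v\neq 0$ alone --- both points you correctly flag rather than hide.
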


\begin{proof}
See \cite{MyID:291}.
\end{proof}

Now we provide an answer to the general inverse problem of the calculus of variations on time scales.

\begin{theorem}[Necessary condition for an Euler--Lagrange equation in integral form]
\label{P4:th:necess:EL:int:diff}
Let $\mathbb{T}$ be an arbitrary time scale and
\begin{equation}
\label{P4:eq:int:diff:2}
H(t,y^{\sigma}(t),y^{\Delta}(t))+\int\limits_{t_{0}}^{t}G(s,y^{\sigma}(s),y^{\Delta}(s))\Delta s=const
\end{equation}
be a given integro-differential equation. If \eqref{P4:eq:int:diff:2}
is to be an Euler--Lagrange equation, then its equation of variation
\eqref{P4:eq:eq:var} is self-adjoint
in the sense of Definition~\ref{P4:def:self:adj:int:diff}.
\end{theorem}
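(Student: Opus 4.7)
The plan is to reduce Theorem~\ref{P4:th:necess:EL:int:diff} to Lemma~\ref{P4:lem:suff:self:adj} by exploiting the explicit form that the delta integral Euler--Lagrange equation takes. Recall from Theorem~\ref{th:EL:delta:sigma} that if a Lagrangian $L$ produces a variational problem on $\mathbb{T}$, then along any extremal $\hat y$ one has
\begin{equation*}
L_{v}\bigl(t,\hat y^{\sigma}(t),\hat y^{\Delta}(t)\bigr)
-\int\limits_{a}^{t}L_{y}\bigl(\tau,\hat y^{\sigma}(\tau),\hat y^{\Delta}(\tau)\bigr)\Delta\tau
= c
\end{equation*}
for some constant $c$. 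Therefore, saying that \eqref{P4:eq:int:diff:2} is an Euler--Lagrange equation means precisely that there exists a Lagrangian $L=L(t,y,v)$, of class $C^{2}$ in $(y,v)$ uniformly in $t$, such that the identifications $H(t,y,v)=L_{v}(t,y,v)$ and $G(t,y,v)=-L_{y}(t,y,v)$ hold along admissible curves.

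First I would fix these identifications and compute the two partial derivatives that appear in the hypothesis of Lemma~\ref{P4:lem:suff:self:adj}. From $H=L_{v}$ we get $H_{y}=L_{vy}$, and from $G=-L_{y}$ we get $G_{v}=-L_{yv}$. Because $L$ is twice continuously differentiable in the variables $(y,v)$ uniformly in $t$, Schwarz's theorem on the equality of mixed partial derivatives yields $L_{vy}=L_{yv}$, and consequently
\begin{equation*}
H_{y}[y](t)+G_{v}[y](t)=L_{vy}[y](t)-L_{yv}[y](t)=0
\end{equation*}
for every admissible $y$ and every $t\in\mathbb{T}^{\kappa}$.

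Next I would invoke Lemma~\ref{P4:lem:suff:self:adj}: since the sufficient condition $H_{y}+G_{v}=0$ is satisfied, the equation of variation \eqref{P4:eq:eq:var} associated with \eqref{P4:eq:int:diff:2} is self-adjoint in the sense of Definition~\ref{P4:def:self:adj:int:diff}. This would close the argument, since the conclusion of the theorem is exactly that the equation of variation of \eqref{P4:eq:int:diff:2} is self-adjoint.

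The essentially trivial mechanical step is the derivative calculation and the appeal to equality of mixed partials; the real conceptual point is the first one, namely recognizing that in the time-scale setting the proper formulation of ``being an Euler--Lagrange equation'' is the \emph{integral} form of Theorem~\ref{th:EL:delta:sigma} rather than a differential one (the paper stresses that a general chain rule is unavailable on $\mathbb{T}$). Once one commits to this integral viewpoint, the identification $H=L_{v}$, $G=-L_{y}$ is forced, and Lemma~\ref{P4:lem:suff:self:adj} does the rest. The main obstacle I anticipate is thus not in the calculation but in justifying that no other matching of \eqref{P4:eq:int:diff:2} with an Euler--Lagrange equation is possible, and in checking that the regularity we need on $L$ (to apply Schwarz and to get $H_{v}\neq 0$ together with the rd-continuity assumptions of Definition~\ref{P4:def:eq:var}) is implicitly available in the problem data.
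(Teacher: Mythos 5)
Your proposal matches the paper's own argument (given in the cited reference \cite{MyID:291}): one assumes \eqref{P4:eq:int:diff:2} arises as the integral-form Euler--Lagrange equation of Theorem~\ref{th:EL:delta:sigma}, reads off the identifications $H=L_{v}$ and $G=-L_{y}$, concludes $H_{y}+G_{v}=L_{vy}-L_{yv}=0$ by equality of mixed partials, and closes with Lemma~\ref{P4:lem:suff:self:adj}. This is essentially the same proof, so nothing further is needed.
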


\begin{proof}
See \cite{MyID:291}.
\end{proof}

\begin{remark}
\label{P4:rem:not:EL}
In practical terms, Theorem~\ref{P4:th:necess:EL:int:diff} is useful to identify equations
that are not Euler--Lagrange: if the equation of variation \eqref{P4:eq:eq:var}
of a given dynamic equation \eqref{P4:eq:int:diff:1} is not self-adjoint,
then we conclude that \eqref{P4:eq:int:diff:1} is not an Euler--Lagrange equation.
\end{remark}

Now we present an example of a second order differential equation
on time scales which is not an Euler--Lagrange equation.

\begin{example}
Let us consider the following second-order linear oscillator
dynamic equation on an arbitrary time scale $\mathbb{T}$:
\begin{equation}
\label{P4:eq:3}
y^{\Delta\Delta}(t)+y^{\Delta}(t)-t=0.
\end{equation}
We may write equation \eqref{P4:eq:3}
in integro-differential form \eqref{P4:eq:int:diff:1}:
\begin{equation}
\label{P4:eq:4}
y^{\Delta}(t)+\int\limits_{t_{0}}^{t}\left(y^{\Delta}(s)-s\right)\Delta s=const,
\end{equation}
where $H[y](t)=y^{\Delta}(t)$ and $G[y](t)=y^{\Delta}(t)-t$.
Because
\begin{equation*}
H_{y}[y](t)=G_{y}[y](t)=0,
\quad
H_{v}[y](t)=G_{v}[y](t)=1,
\end{equation*}
the equation of variation associated with \eqref{P4:eq:4} is given by
\begin{equation}
\label{P4:eq:5}
u^{\Delta}(t)+\int\limits_{t_{0}}^{t}u^{\Delta}(s)\Delta s=0 \iff u^{\Delta}(t)+u(t)=u(t_{0}).
\end{equation}
We may notice that equation \eqref{P4:eq:5} cannot be written in form
\eqref{P4:eq:self:adj:int:diff}, hence, it is not self-adjoint.
Following Theorem~\ref{P4:th:necess:EL:int:diff} (see Remark~\ref{P4:rem:not:EL})
we conclude that equation \eqref{P4:eq:3} is not an Euler--Lagrange equation.
\end{example}

Now we consider the particular case of Theorem~\ref{P4:th:necess:EL:int:diff}
when $\mathbb{T}=\mathbb{R}$ and $y\in C^{2}([t_{0},t_{1}];\mathbb{R})$. In this case
operator $[\cdot]$ of \eqref{eq:2convenientoper:++} has the form
$$
[y](t)=(t,y(t),y'(t))=:[y]_{\R}(t),
$$
while condition \eqref{P4:eq:self:adj:int:diff} can be written as
\begin{equation}
\label{P4:eq:self:adj:R}
p(t)u'(t)+\int\limits_{t_{0}}^{t}r(s)u(s)ds=const.
\end{equation}

\begin{corollary}
\label{P4:cor:R}
If a given integro-differential equation
$$
H(t,y(t),y'(t))+\int\limits_{t_{0}}^{t}G(s,y(s),y'(s))ds=const
$$
is to be the Euler--Lagrange equation of the variational problem
\begin{equation*}
\label{P4:eq:funct:R}
\mathcal{I}[y]=\int\limits_{t_{0}}^{t_{1}} L(t,y(t),y'(t))dt
\end{equation*}
(cf., e.g., \cite{MR2004181}), then its equation of variation
$$
H_{y}[u]_{\mathbb{R}}(t)u(t)+H_{v}[u]_{\mathbb{R}}(t)u'(t)
+\int\limits_{t_{0}}^{t}G_{y}[u]_{\mathbb{R}}(s)u(s)+G_{v}[u]_{\mathbb{R}}(s)u'(s)ds=0
$$
must be self-adjoint, in the sense of Definition~\ref{P4:def:self:adj:int:diff}
with \eqref{P4:eq:self:adj:int:diff} given by \eqref{P4:eq:self:adj:R}.
\end{corollary}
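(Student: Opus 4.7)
The plan is to obtain Corollary~\ref{P4:cor:R} as a direct specialization of Theorem~\ref{P4:th:necess:EL:int:diff} to the time scale $\mathbb{T}=\mathbb{R}$, carefully translating each piece of time-scale machinery into its classical counterpart. First I would record the identifications that hold for $\mathbb{T}=\mathbb{R}$: the forward jump satisfies $\sigma(t)=t$ (so $y^\sigma=y$ and $u^\sigma=u$), the delta derivative coincides with the ordinary derivative $y^\Delta=y'$, and by Example~\ref{ex:int:R:hZ:qN}(1) the delta integral coincides with the Riemann integral $\int\Delta t=\int dt$. Under these identifications the operator $[\cdot]$ introduced in \eqref{eq:2convenientoper:++} collapses precisely to $[y]_{\mathbb{R}}(t)=(t,y(t),y'(t))$, as indicated in the statement.

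Next I would rewrite the integro-differential equation \eqref{P4:eq:int:diff:2} and the equation of variation \eqref{P4:eq:eq:var} under these identifications. The hypothesis becomes $H(t,y(t),y'(t))+\int_{t_0}^t G(s,y(s),y'(s))\,ds=\textnormal{const}$, and the corresponding equation of variation becomes
\begin{equation*}
H_y[u]_{\mathbb{R}}(t)\,u(t)+H_v[u]_{\mathbb{R}}(t)\,u'(t)+\int_{t_0}^t\bigl(G_y[u]_{\mathbb{R}}(s)u(s)+G_v[u]_{\mathbb{R}}(s)u'(s)\bigr)\,ds=0,
\end{equation*}
which is exactly the form appearing in the corollary. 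Similarly, the self-adjointness condition \eqref{P4:eq:self:adj:int:diff}, namely $p(t)u^\Delta(t)+\int_{t_0}^t r(s)u^\sigma(s)\Delta s=\textnormal{const}$, reduces under $\sigma=\mathrm{id}$ and $\Delta=d/dt$ to \eqref{P4:eq:self:adj:R}, i.e.\ $p(t)u'(t)+\int_{t_0}^t r(s)u(s)\,ds=\textnormal{const}$, verifying that Definition~\ref{P4:def:self:adj:int:diff} specializes to the classical notion quoted in the statement.

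With these translations in place, the conclusion follows immediately by invoking Theorem~\ref{P4:th:necess:EL:int:diff}: if the given integro-differential equation is the Euler--Lagrange equation of some variational functional $\mathcal{I}[y]=\int_{t_0}^{t_1}L(t,y,y')\,dt$, then its equation of variation is self-adjoint in the sense of Definition~\ref{P4:def:self:adj:int:diff}, which, in the $\mathbb{T}=\mathbb{R}$ setting, is precisely self-adjointness in the sense of \eqref{P4:eq:self:adj:R}. There is no genuine obstacle here—the whole content of the corollary lies in the general Theorem~\ref{P4:th:necess:EL:int:diff}—and the only point that requires mild care is the bookkeeping of the shifted evaluations: making sure that $u^\sigma\mapsto u$ in both the bulk term and the integrand, and that the classical $C^2$ regularity of $y$ suffices for the rd-continuity hypotheses on $F_y[y]$ and $F_v[y]$ ($F\in\{G,H\}$) demanded by Definition~\ref{P4:def:eq:var}, which indeed it does since continuous functions on $\mathbb{R}$ are automatically rd-continuous.
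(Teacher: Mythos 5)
Your proposal is correct and follows exactly the paper's own argument: the corollary is obtained by specializing Theorem~\ref{P4:th:necess:EL:int:diff} to $\mathbb{T}=\mathbb{R}$, with the translations $\sigma=\mathrm{id}$, $y^{\Delta}=y'$, and $\int\Delta t=\int dt$ reducing the equation of variation and the self-adjointness condition to their classical forms. The extra bookkeeping you supply is a faithful unpacking of the one-line proof given in the paper.
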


\begin{proof}
Follows from Theorem \ref{P4:th:necess:EL:int:diff} with $\mathbb{T}=\mathbb{R}$.
\end{proof}

Now we consider the particular case of Theorem~\ref{P4:th:necess:EL:int:diff}
when $\mathbb{T}=h\mathbb{Z}$, $h>0$. In this case operator $[\cdot]$
of \eqref{eq:2convenientoper:++} has the form
$$
[y](t)=(t,y(t+h),\Delta_{h} y(t))=:[y]_{h}(t),
$$
where
$$
\Delta_{h}y(t)=\frac{y(t+h)-y(t)}{h}.
$$
For $\mathbb{T}=h\mathbb{Z}$, $h>0$,
condition \eqref{P4:eq:self:adj:int:diff} can be written as
\begin{equation}
\label{P4:eq:self:adj:hZ}
p(t)\Delta_{h}u(t)+\sum\limits_{k=\frac{t_{0}}{h}}^{\frac{t}{h}-1}hr(kh)u(kh+h)=const.
\end{equation}

\begin{corollary}
\label{P4:cor:hZ}
If a given difference equation
$$
H(t,y(t+h),\Delta_{h} y(t))
+\sum\limits_{k=\frac{t_{0}}{h}}^{\frac{t}{h}-1}hG(kh,y(kh+h),\Delta_{h} y(kh))=const
$$
is to be the Euler--Lagrange equation of the discrete variational problem
\begin{equation*}
\label{P4:eq:funct:hZ}
\mathcal{I}[y]=\sum\limits_{k=\frac{t_{0}}{h}}^{\frac{t_{1}}{h}-1}hL(kh,y(kh+h),\Delta_{h} y(kh))
\end{equation*}
(cf., e.g., \cite{MyID:179}), then its equation of variation
\begin{multline*}
H_{y}[u]_{h}(t)u(t+h)+H_{v}[u]_{h}(t)\Delta_{h}u(t)\\
+h\sum\limits_{k=\frac{t_{0}}{h}}^{\frac{t}{h}-1}\left(
G_{y}[u]_{h}(kh)u(kh+h)+G_{v}[u]_{h}(kh)\Delta_{h}u(kh)\right)=0
\end{multline*}
is self-adjoint, in the sense of Definition~\ref{P4:def:self:adj:int:diff}
with \eqref{P4:eq:self:adj:int:diff} given by \eqref{P4:eq:self:adj:hZ}.
\end{corollary}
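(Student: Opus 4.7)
The plan is to specialize Theorem~\ref{P4:th:necess:EL:int:diff} to the time scale $\mathbb{T}=h\mathbb{Z}$, $h>0$, exactly as in the proof of Corollary~\ref{P4:cor:R}. On this time scale we have $\sigma(t)=t+h$ and $\mu(t)\equiv h$, so for any admissible $y$ the operator $[\cdot]$ in \eqref{eq:2convenientoper:++} reduces to
$$
[y](t)=\bigl(t,\,y^{\sigma}(t),\,y^{\Delta}(t)\bigr)=\bigl(t,\,y(t+h),\,\Delta_{h}y(t)\bigr)=[y]_{h}(t).
$$
Moreover, by the isolated-point case of Example~\ref{ex:int:R:hZ:qN}, every delta integral from $t_{0}$ to $t$ (with $t_{0}<t$) becomes a finite sum,
$$
\int\limits_{t_{0}}^{t}f(s)\,\Delta s=\sum\limits_{k=\frac{t_{0}}{h}}^{\frac{t}{h}-1} h\,f(kh).
$$

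Applying these two substitutions, the integro-differential equation \eqref{P4:eq:int:diff:2} of Theorem~\ref{P4:th:necess:EL:int:diff} takes exactly the stated form
$$
H(t,y(t+h),\Delta_{h}y(t))+\sum\limits_{k=\frac{t_{0}}{h}}^{\frac{t}{h}-1} h\,G(kh,y(kh+h),\Delta_{h}y(kh))=\mathrm{const}.
$$
Likewise, the equation of variation \eqref{P4:eq:eq:var} becomes
$$
H_{y}[u]_{h}(t)u(t+h)+H_{v}[u]_{h}(t)\Delta_{h}u(t)
+h\!\!\sum\limits_{k=\frac{t_{0}}{h}}^{\frac{t}{h}-1}\!\!\bigl(G_{y}[u]_{h}(kh)u(kh+h)+G_{v}[u]_{h}(kh)\Delta_{h}u(kh)\bigr)=0,
$$
which is the equation of variation written in the statement of the corollary.

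It remains to verify that the self-adjointness condition from Definition~\ref{P4:def:self:adj:int:diff} specializes to \eqref{P4:eq:self:adj:hZ}. On $h\mathbb{Z}$ the expression $Lu(t)=p(t)u^{\Delta}(t)+\int_{t_{0}}^{t}r(s)u^{\sigma}(s)\,\Delta s$ becomes
$$
Lu(t)=p(t)\Delta_{h}u(t)+\sum\limits_{k=\frac{t_{0}}{h}}^{\frac{t}{h}-1}h\,r(kh)u(kh+h),
$$
so \eqref{P4:eq:self:adj:int:diff} reads precisely as \eqref{P4:eq:self:adj:hZ}. Thus Theorem~\ref{P4:th:necess:EL:int:diff} applied on $\mathbb{T}=h\mathbb{Z}$ yields the claim. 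The only genuinely delicate point in the argument is bookkeeping: one must be careful that the sum index ranges consistently with the time-scale convention $\sum_{t\in[t_0,t)}$ and that the evaluation of partial derivatives at $[u]_{h}(kh)=(kh,u(kh+h),\Delta_{h}u(kh))$ matches the shifted argument $y^{\sigma}$ in \eqref{P4:eq:eq:var}; beyond this, no further work is required.
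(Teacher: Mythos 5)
Your proposal is correct and follows exactly the paper's route: the paper's own proof is the one-line observation that the corollary follows from Theorem~\ref{P4:th:necess:EL:int:diff} with $\mathbb{T}=h\mathbb{Z}$, and you have simply spelled out the routine specializations ($\sigma(t)=t+h$, $\mu(t)\equiv h$, delta integrals becoming sums) that make this instantiation explicit.
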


\begin{proof}
Follows from Theorem~\ref{P4:th:necess:EL:int:diff} with $\mathbb{T}=h\mathbb{Z}$.
\end{proof}

Finally, let us consider the particular case of Theorem~\ref{P4:th:necess:EL:int:diff}
when $\mathbb{T}=\overline{q^{\mathbb{Z}}}=q^{\mathbb{Z}}\cup\left\{0\right\}$,
where $q^{\mathbb{Z}}=\left\{q^{k}: k\in\mathbb{Z}, q>1\right\}$.
In this case operator $[\cdot]$ of \eqref{eq:2convenientoper:++} has the form
$$
[y]_{\overline{q^{\mathbb{Z}}}}(t)=(t,y(qt),\Delta_{q} y(t))=:[y]_{q}(t),
$$
where
$$
\Delta_{q}y(t)=\frac{y(qt)-y(t)}{(q-1)t}.
$$
For $\mathbb{T}=\overline{q^{\mathbb{Z}}}$, $q>1$,
condition \eqref{P4:eq:self:adj:int:diff} can be written
as (cf., e.g., \cite{Rahmat}):
\begin{equation}
\label{P4:eq:self:adj:qZ}
p(t)\Delta_{q}u(t)+ (q-1)\sum\limits_{s\in [t_{0},t) \cap\mathbb{T}}sr(s)u(qs)=const.
\end{equation}

\begin{corollary}
\label{P4:cor:qZ}
If a given $q$-equation
$$
H(t,y(qt),\Delta_{q} y(t))+(q-1)\sum\limits_{s\in [t_{0},t)
\cap\mathbb{T}}sG(s,y(qs),\Delta_{q}y(s))=const,
$$
$q>1$, is to be the Euler--Lagrange equation of the variational problem
\begin{equation*}
\label{P4:eq:funct:qZ}
\mathcal{I}[y]=(q-1)\sum\limits_{t\in [t_{0},t_{1})
\cap\mathbb{T}}tL(t,y(qt),\Delta_{q}y(t)),
\end{equation*}
$t_{0}, t_{1}\in \overline{q^{\mathbb{Z}}}$, then its equation of variation
\begin{multline*}
H_{y}[ u]_{q}(t)u(qt)+H_{v}[u]_{q}(t)\Delta_{q}u(t)\\
+(q-1)\sum\limits_{s\in [t_{0},t)\cap\mathbb{T}} s\left(
G_{y}[u]_{q}(s)u(qs)+G_{v}[u]_{q}(s)\Delta_{q}u(s) \right)=0
\end{multline*}
is self-adjoint, in the sense of Definition~\ref{P4:def:self:adj:int:diff}
with \eqref{P4:eq:self:adj:int:diff} given by \eqref{P4:eq:self:adj:qZ}.
\end{corollary}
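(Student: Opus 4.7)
The plan is simply to specialize Theorem~\ref{P4:th:necess:EL:int:diff} to the time scale $\mathbb{T} = \overline{q^{\mathbb{Z}}}$, mirroring what was done in the proofs of Corollaries~\ref{P4:cor:R} and~\ref{P4:cor:hZ}. No new machinery is required; the content of the corollary is really just a translation of the general statement into the language of the Jackson $q$-calculus.

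First I would record the relevant identifications on this time scale. From Table~\ref{tbl:1} we have $\sigma(t) = qt$ and $\mu(t) = t(q-1)$, so that $y^{\sigma}(t) = y(qt)$ and $y^{\Delta}(t) = \Delta_{q} y(t)$. Consequently the abbreviation $[\cdot]$ introduced in \eqref{eq:2convenientoper:++} collapses to $[y]_{q}(t) = (t, y(qt), \Delta_{q} y(t))$, which is exactly the bracket appearing in the statement.

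Next I would rewrite every delta integral as a $q$-sum. Since $\overline{q^{\mathbb{Z}}}\setminus\{0\}$ consists of isolated points, item~2 of Example~\ref{ex:int:R:hZ:qN} combined with $\mu(s) = s(q-1)$ yields, for any rd-continuous $f$ and $t_0 < t$ in $\mathbb{T}$,
$$
\int_{t_{0}}^{t} f(s)\, \Delta s \;=\; (q-1)\sum_{s\in [t_{0},t)\cap \mathbb{T}} s\, f(s).
$$
Applying this identity to \eqref{P4:eq:int:diff:2} and to the equation of variation \eqref{P4:eq:eq:var} converts them verbatim into the $q$-equation and the $q$-equation of variation displayed in the corollary, and it turns the self-adjoint form \eqref{P4:eq:self:adj:int:diff} into precisely \eqref{P4:eq:self:adj:qZ}.

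Finally I would invoke Theorem~\ref{P4:th:necess:EL:int:diff}: the hypothesis that the given $q$-equation is the Euler--Lagrange equation of the displayed $q$-variational functional is exactly the time-scale hypothesis of that theorem with $\mathbb{T} = \overline{q^{\mathbb{Z}}}$, so its conclusion---that the associated equation of variation is self-adjoint in the sense of Definition~\ref{P4:def:self:adj:int:diff}---transfers directly. The only potential pitfall is bookkeeping: keeping the $q$-summation ranges consistent and making sure the factor $s(q-1)$ coming from $\mu$ is placed correctly inside each sum. There is no conceptual obstacle beyond Theorem~\ref{P4:th:necess:EL:int:diff} itself.
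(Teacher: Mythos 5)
Your proposal is correct and follows exactly the paper's route: the paper's proof is the one-line specialization ``Choose $\mathbb{T}=\overline{q^{\mathbb{Z}}}$ in Theorem~\ref{P4:th:necess:EL:int:diff},'' and your additional bookkeeping (using $\sigma(t)=qt$, $\mu(t)=t(q-1)$ to turn delta integrals into $q$-sums) is just the explicit justification of that specialization.
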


\begin{proof}
Choose $\mathbb{T}=\overline{q^{\mathbb{Z}}}$
in Theorem~\ref{P4:th:necess:EL:int:diff}.
\end{proof}

More information about Euler--Lagrange equations for $q$-variational
problems may be found in \cite{FerreiraTorres,MyID:266,MR2966852}
and references therein.


\subsection{Discussion}
\label{final remarks}

On an arbitrary time scale $\mathbb{T}$, we can easily show equivalence between
the integro-differential equation \eqref{P4:eq:int:diff:1} and the second order differential
equation \eqref{P4:eq:6} below (Proposition~\ref{P4:prop:1}). However, when we consider equations
of variations of them, we notice that it is not possible to prove an equivalence between
them on an arbitrary time scale. The main reason of this impossibility, even in the discrete time scale
$\mathbb{Z}$, is the absence of a general chain rule
on an arbitrary time scale (see, e.g., Example~1.85 of \cite{BohnerDEOTS}).
However, on $\mathbb{T}=\mathbb{R}$ we can present
this equivalence (Proposition~\ref{P4:prop:2}).

\begin{proposition}[See \cite{MyID:291}]
\label{P4:prop:1}
The integro-differential equation \eqref{P4:eq:int:diff:1}
is equivalent to a second order delta differential equation
\begin{equation}
\label{P4:eq:6}
W\left(t,y^{\sigma}(t), y^{\Delta}(t), y^{\Delta\Delta}(t)\right)=0.
\end{equation}
\end{proposition}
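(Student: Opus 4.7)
The plan is to establish the equivalence by a pair of mutually inverse operations: delta differentiation passes from \eqref{P4:eq:int:diff:1} to \eqref{P4:eq:6}, and delta integration reverses the passage. For the forward implication, I apply $\Delta$ to both sides of \eqref{P4:eq:int:diff:1}. The right-hand side is constant, so its delta derivative vanishes. By the fundamental antiderivative property on time scales — the theorem stating that $F(t)=\int_{t_{0}}^{t}f(\tau)\Delta\tau$ satisfies $F^{\Delta}=f$ for $f\in C_{rd}$ — the delta derivative of $\int_{t_{0}}^{t}G[y](s)\Delta s$ is exactly $G[y](t)=G(t,y^{\sigma}(t),y^{\Delta}(t))$. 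Under the smoothness hypotheses on $H$ and on admissible $y$, the composite $t\mapsto H(t,y^{\sigma}(t),y^{\Delta}(t))$ is itself delta differentiable, and $(H[y])^{\Delta}(t)$ depends only on $t$, $y^{\sigma}(t)$, $y^{\Delta}(t)$ and $(y^{\Delta})^{\Delta}(t)=y^{\Delta\Delta}(t)$. Setting
$$W(t,y^{\sigma}(t),y^{\Delta}(t),y^{\Delta\Delta}(t)):=(H[y])^{\Delta}(t)+G[y](t)$$
then yields \eqref{P4:eq:6}.

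For the converse, I integrate \eqref{P4:eq:6} from $t_{0}$ to $t$. By linearity of the delta integral (Theorem~\ref{th:int:prop:delta}) together with the fact that $H[y]$ is an antiderivative of $(H[y])^{\Delta}$, one obtains
$$H[y](t)-H[y](t_{0})+\int_{t_{0}}^{t}G[y](s)\Delta s=0,$$
which is \eqref{P4:eq:int:diff:1} with $const:=H[y](t_{0})$.

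The delicate step is justifying that $(H[y])^{\Delta}(t)$ really is a function of just the four variables $(t,y^{\sigma}(t),y^{\Delta}(t),y^{\Delta\Delta}(t))$. On $\mathbb{T}=\mathbb{R}$ this is the ordinary chain rule, but on a general time scale no closed-form chain rule is available \cite{BohnerDEOTS}. I would bypass this by arguing directly from the definition of $\Delta$: at right-scattered $t$,
$$(H[y])^{\Delta}(t)=\frac{H(\sigma(t),y^{\sigma}(\sigma(t)),y^{\Delta}(\sigma(t)))-H(t,y^{\sigma}(t),y^{\Delta}(t))}{\mu(t)},$$
and Theorem~\ref{th:differ:delta}(4) applied successively to $y$ and to $y^{\Delta}$ gives $y(t)=y^{\sigma}(t)-\mu(t)y^{\Delta}(t)$ and $y^{\Delta}(\sigma(t))=y^{\Delta}(t)+\mu(t)y^{\Delta\Delta}(t)$, which together express every argument on the right-hand side in terms of the required four variables (together with the graininess $\mu$ and the time-scale data $\sigma$, which are functions of $t$ alone). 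At right-dense $t$ the same conclusion follows from the limit form of $\Delta$. This chain-rule obstruction is precisely the reason, foreshadowed in Section~\ref{final remarks}, why the equivalence \emph{cannot} be lifted from \eqref{P4:eq:int:diff:1} to its equation of variation.
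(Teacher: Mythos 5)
Your argument is correct and is essentially the proof given in the cited source \cite{MyID:291}: delta-differentiate \eqref{P4:eq:int:diff:1} (the integral term returning $G[y](t)$ by the antiderivative theorem) to obtain \eqref{P4:eq:6} with $W\langle y\rangle(t):=(H[y])^{\Delta}(t)+G[y](t)$, and integrate back from $t_{0}$ to $t$ for the converse, absorbing $H[y](t_{0})$ into the constant. Your extra care in showing, via $f^{\sigma}=f+\mu f^{\Delta}$ at right-scattered points, that $(H[y])^{\Delta}(t)$ genuinely depends only on $\left(t,y^{\sigma}(t),y^{\Delta}(t),y^{\Delta\Delta}(t)\right)$ despite the absence of a chain rule is a welcome refinement rather than a deviation, and the brief appeal to the limit form at right-dense points is adequate under the standing regularity assumptions.
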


Let $\mathbb{T}$ be a time scale such that $\mu$ is delta differentiable.
The equation of variation of a second order differential
equation \eqref{P4:eq:6} is given by
\begin{equation}
\label{P4:eq:eq:var:1}
W_{z}\langle u\rangle (t)u^{\Delta\Delta}(t)
+W_{v}\langle u\rangle (t) u^{\Delta}(t)
+W_{y}\langle u\rangle (t) u^{\sigma}(t)=0.
\end{equation}
On an arbitrary time scale it is impossible to prove the equivalence
between the equation of variation \eqref{P4:eq:eq:var} and \eqref{P4:eq:eq:var:1}.
Indeed, after differentiating both sides of equation \eqref{P4:eq:eq:var}
and using the product rule given by Theorem~\ref{th:differ:prop:delta}, one has
\begin{multline}
\label{P4:eq:9}
H_{y}[u](t)u^{\sigma\Delta}(t)+H_{y}^{\Delta}[u](t)u^{\sigma\sigma}(t)
+H_{v}[u](t)u^{\Delta\Delta}(t)+H_{v}^{\Delta}[u](t)u^{\Delta\sigma}(t)\\
+G_{y}[u](t)u^{\sigma}(t)+G_{v}[u](t)u^{\Delta}(t)=0.
\end{multline}
The direct calculations
\begin{itemize}
\item $H_{y}[u](t)u^{\sigma\Delta}(t)=H_{y}[u](t)(u^{\Delta}(t)
+\mu^{\Delta}(t)u^{\Delta}(t)+\mu^{\sigma}(t)u^{\Delta\Delta}(t))$,

\item $H_{y}^{\Delta}[u](t)u^{\sigma\sigma}(t)
=H_{y}^{\Delta}[u](t)(u^{\sigma}(t)+\mu^{\sigma}(t)u^{\Delta}(t)
+\mu(t)\mu^{\sigma}(t)u^{\Delta\Delta}(t))$,

\item $H_{v}^{\Delta}[u](t)u^{\Delta\sigma}(t)
=H_{v}^{\Delta}[u](t)(u^{\Delta}(t)+\mu u^{\Delta\Delta}(t))$,
\end{itemize}
and the fourth item of Theorem~\ref{th:differ:delta},
allow us to write equation \eqref{P4:eq:9} in form
\begin{multline}
\label{P4:eq:11}
u^{\Delta\Delta}(t)\left[\mu(t)H_{y}[u](t)+H_{v}[u](t)\right]^{\sigma}\\
+u^{\Delta}(t)\left[H_{y}[u](t)+(\mu(t)H_{y}[u](t))^{\Delta}
+H_{v}^{\Delta}[u](t)+G_{v}[u](t)\right]\\
+u^{\sigma}(t)\left[H_{y}^{\Delta}[u](t)+G_{y}[u](t)\right]=0.
\end{multline}
We are not able to prove that the coefficients of
\eqref{P4:eq:11} are the same as in \eqref{P4:eq:eq:var:1},
respectively. This is due to the fact that we cannot
find the partial derivatives of \eqref{P4:eq:6}, that is,
$W_{z}\langle u\rangle(t)$, $W_{v}\langle u\rangle(t)$ and $W_{y}\langle u\rangle(t)$,
from equation \eqref{P4:eq:eq:var:1} because of lack of a general
chain rule in an arbitrary time scale \cite{BohGus1}.
The equivalence, however, is true for $\mathbb{T}=\mathbb{R}$.
Operator $\left\langle \cdot\right\rangle$ has in this case the form
$\left\langle y\right\rangle (t)=(t, y(t), y'(t), y''(t))
=: \left\langle y\right\rangle_{\R} (t)$.

\begin{proposition}[See \cite{MyID:291}]
\label{P4:prop:2}
The equation of variation
\begin{equation*}
H_{y}[u]_{\mathbb{R}}(t)u(t)+H_{v}[u]_{\mathbb{R}}(t)u'(t)
+\int\limits_{t_{0}}^{t}G_{y}[u]_{\mathbb{R}}(s)u(s)
+G_{v}[u]_{\mathbb{R}}(s)u'(s)ds=0
\end{equation*}
is equivalent to the second order differential equation
\begin{equation*}
W_{z}\langle u\rangle_{\mathbb{R}}(t)u''(t)
+W_{v}\langle u\rangle_{\mathbb{R}}(t) u'(t)
+W_{y}\langle u\rangle_{\mathbb{R}}(t)u(t)=0.
\end{equation*}
\end{proposition}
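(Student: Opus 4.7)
The plan is to compute $W$ explicitly from the construction underlying Proposition~\ref{P4:prop:1} in the case $\mathbb{T}=\mathbb{R}$, and then verify that the classical operator $W_{z}u''+W_{v}u'+W_{y}u$ coincides with the $t$-derivative of the left-hand side of the equation of variation. First, I would differentiate the integro-differential equation $H(t,y,y')+\int_{t_{0}}^{t}G(s,y,y')\,ds=\text{const}$ with respect to $t$ to obtain $H_{t}+H_{y}y'+H_{v}y''+G=0$, so that an equivalent second-order equation is $W(t,y,v,z)=0$ with
\begin{equation*}
W(t,y,v,z)=H_{t}(t,y,v)+H_{y}(t,y,v)\,v+H_{v}(t,y,v)\,z+G(t,y,v).
\end{equation*}

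Next, I would compute the three partials, namely $W_{z}=H_{v}$, $W_{v}=H_{tv}+H_{yv}v+H_{y}+H_{vv}z+G_{v}$, and $W_{y}=H_{ty}+H_{yy}v+H_{vy}z+G_{y}$, all evaluated at $(t,y,v)$. Substituting $\langle u\rangle_{\mathbb{R}}(t)=(t,u,u',u'')$ and expanding $W_{z}u''+W_{v}u'+W_{y}u$ produces an explicit sum of bilinear terms in $u, u', u''$ with coefficients built from the partials of $H$ and $G$. In parallel, I would differentiate the left-hand side of the equation of variation, $H_{y}(t,u,u')u+H_{v}(t,u,u')u'+\int_{t_{0}}^{t}\bigl[G_{y}[u]_{\mathbb{R}}(s)u(s)+G_{v}[u]_{\mathbb{R}}(s)u'(s)\bigr]\,ds$, using the ordinary product rule and the fundamental theorem of calculus, and collect terms in $u$, $u'$, $u''$. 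Matching the two expansions term by term, invoking the Schwarz identities $H_{yv}=H_{vy}$, $H_{ty}=H_{yt}$, $H_{tv}=H_{vt}$ (available from the $C^{2}$ regularity implicit in the hypothesis that $t\mapsto F_{y}[y](t)$ and $t\mapsto F_{v}[y](t)$ be continuous, $F\in\{H,G\}$), I would verify that the two expressions coincide identically.

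This establishes that any $u$ satisfying the equation of variation also satisfies $W_{z}u''+W_{v}u'+W_{y}u=0$. For the converse direction, I would integrate the second-order differential equation from $t_{0}$ to $t$: since $W_{z}u''+W_{v}u'+W_{y}u$ equals exactly $\frac{d}{dt}\bigl[H_{y}(t,u,u')u+H_{v}(t,u,u')u'\bigr]+G_{y}[u]_{\mathbb{R}}(t)u(t)+G_{v}[u]_{\mathbb{R}}(t)u'(t)$, integration reproduces the equation of variation up to the additive constant $H_{y}[u]_{\mathbb{R}}(t_{0})u(t_{0})+H_{v}[u]_{\mathbb{R}}(t_{0})u'(t_{0})$, and evaluating the equation of variation at $t=t_{0}$ forces this constant to vanish, yielding equivalence.

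The main obstacle is bookkeeping: the two expansions each contain seven bilinear terms of the form $u$, $u'$, $u''$, $u'u$, $(u')^{2}$, $u''u$, $u''u'$, and they must be matched exactly. The cancellations depend crucially on the symmetry of mixed partials and on the standard chain rule $\frac{d}{dt}H_{y}(t,u,u')=H_{ty}+H_{yy}u'+H_{yv}u''$; this is precisely the ingredient that is unavailable on a general time scale, as emphasized in the discussion preceding the statement, so the proof really is specific to $\mathbb{T}=\mathbb{R}$.
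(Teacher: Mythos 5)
Your proposal is correct and follows essentially the route the paper intends (the paper itself only cites \cite{MyID:291} for the proof, but its surrounding discussion---differentiating the equation of variation, applying the product/chain rule, and matching the resulting coefficients of $u$, $u'$, $u''$ against $W_{z}u''+W_{v}u'+W_{y}u$ with $W(t,y,v,z)=H_{t}+H_{y}v+H_{v}z+G$---is exactly your computation specialized to $\mathbb{T}=\mathbb{R}$, where the chain rule and Schwarz symmetry are available). The only point to state a little more carefully is the converse direction: integrating the second-order equation shows the left-hand side of the equation of variation is \emph{constant}, equal to $H_{y}[u]_{\mathbb{R}}(t_{0})u(t_{0})+H_{v}[u]_{\mathbb{R}}(t_{0})u'(t_{0})$, so the equivalence is between ``derivative of the left-hand side vanishes'' and ``left-hand side is constant,'' the normalization of that constant to zero being the same convention already used for the integro-differential equation itself.
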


Proposition~\ref{P4:prop:2} allows us to obtain the classical result of
\cite[Theorem II]{Davis} as a corollary of our Theorem~\ref{P4:th:necess:EL:int:diff}.
The absence of a chain rule on an arbitrary time scale (even for $\mathbb{T}=\mathbb{Z}$)
implies that the classical approach \cite{Davis} fails on time scales.
This is the reason why we use here a completely different approach to the subject based
on the integro-differential form. The case $\mathbb{T}=\mathbb{Z}$ was recently investigated
in \cite{Helmholtz}. However, similarly to \cite{Davis}, the approach of \cite{Helmholtz}
is based on the differential form and cannot be extended to general time scales.


\section{The delta-nabla calculus of variations for composition functionals}
\label{sec:mr}

The delta-nabla calculus of variations has been introduced in \cite{TorresDeltaNabla}.
Here we investigate more general problems of the time-scale calculus of variations for a functional
that is the composition of a certain scalar function with the delta and nabla integrals of a vector valued field.
We begin by proving general Euler--Lagrange equations in integral form (Theorem~\ref{P6:th:main}).
Then we consider cases when initial or terminal boundary conditions are not specified,
obtaining corresponding transversality conditions
(Theorems~\ref{P6:th:trans:initial} and \ref{P6:th:trans:terminal}).
Furthermore, we prove necessary optimality conditions for general isoperimetric problems
given by the composition of delta-nabla integrals (Theorem~\ref{P6:th:conds:iso}).
Finally, some illustrating examples are presented (Section~\ref{sec:examples}).


\subsection{The Euler--Lagrange equations}
\label{subsec:EL}

Let us begin by defining the class of functions $C_{k,n}^{1}([a,b];\mathbb{R})$,
which contains delta and nabla differentiable functions.
\begin{definition}
\label{P6:class}
By $C_{k,n}^{1}([a,b];\mathbb{R})$, $k,n\in\N$, we denote the class of functions
$y:[a,b]\rightarrow\mathbb{R}$ such that: if $k\neq 0$ and $n\neq 0$, then
$y^{\Delta}$ is continuous on $[a,b]^{\kappa}_{\kappa}$ and
$y^{\nabla}$ is continuous on $[a,b]_{\kappa}^{\kappa}$,
where $[a,b]^{\kappa}_{\kappa}:=[a,b]^{\kappa}\cap [a,b]_{\kappa}$;
if $n=0$, then $y^{\Delta}$ is continuous on $[a,b]^{\kappa}$;
if $k=0$, then $y^{\nabla}$ is continuous on $[a,b]_{\kappa}$.
\end{definition}
Our aim is to find a function $y$ which minimizes
or maximizes the following variational problem:
\begin{multline}
\label{P6:eq:problem}
\mathcal L[y]=H\left(\int\limits_{a}^{b}f_{1}(t,y^{\sigma}(t),y^{\Delta}(t))\Delta t,
\ldots,\int\limits_{a}^{b}f_{k}(t,y^{\sigma}(t),y^{\Delta}(t))\Delta t,\right.\\
\left.\int\limits_{a}^{b}f_{k+1}(t,y^{\rho}(t),y^{\nabla}(t))\nabla t,\ldots,
\int\limits_{a}^{b}f_{k+n}(t,y^{\rho}(t),y^{\nabla}(t))\nabla t\right),
\end{multline}
\begin{equation}
\label{P6:eq:bound:conds}
(y(a)=y_{a}), \quad (y(b)=y_{b}).
\end{equation}
The parentheses in \eqref{P6:eq:bound:conds}, around the end-point conditions,
means that those conditions may or may not occur
(it is possible that one or both $y(a)$ and $y(b)$ are free).
A function $y\in C_{k,n}^{1}$ is said to be admissible provided
it satisfies the boundary conditions \eqref{P6:eq:bound:conds} (if any is given).
For $k = 0$ problem \eqref{P6:eq:problem}--\eqref{P6:eq:bound:conds} becomes a nabla problem
(neither delta integral nor delta derivative is present);
for $n = 0$ problem \eqref{P6:eq:problem}--\eqref{P6:eq:bound:conds}
reduces to a delta problem (neither nabla integral nor nabla derivative is present).
For simplicity, we use the operators $[\cdot]$ and $\lb\cdot\rb$ defined by
\begin{equation*}
[y](t):=(t,y^{\s}(t),y^{\Delta}(t)),\quad
\lb y\rb (t):=(t,y^{\rho}(t),y^{\n}(t)).
\end{equation*}
We assume that:
\begin{enumerate}
\item the function $H:\mathbb{R}^{n+k}\rightarrow\mathbb{R}$
has continuous partial derivatives with respect to its arguments,
which we denote by $H_{i}^{'}$, $i=1, \ldots, n+k$;

\item functions $(t,y,v)\rightarrow f_{i}(t,y,v)$
from $[a,b]\times\mathbb{R}^{2}$ to $\mathbb{R}$, $i=1,\ldots, n+k$,
have partial continuous derivatives with respect to $y$ and $v$
uniformly in $t \in [a,b]$, which we denote by $f_{iy}$ and $f_{iv}$;

\item $f_{i}$, $f_{iy}$, $f_{iv}$ are rd-continuous
on $[a,b]^{\kappa}$, $i=1,\ldots,k$, and ld-continuous on
$[a,b]_{\kappa}$, $i=k+1,\ldots,k+n$, for all $y\in C_{k,n}^{1}$.
\end{enumerate}

\begin{definition}[Cf. \cite{TorresDeltaNabla}]
\label{def:loc:extr}
We say that an admissible function $\hat{y}\in C_{k,n}^{1}([a,b];\mathbb{R})$
is a local minimizer (respectively, local maximizer) to problem
\eqref{P6:eq:problem}--\eqref {P6:eq:bound:conds}, if there exists
$\delta >0$ such that $\mathcal{L}[\hat{y}]\leq \mathcal{L} [y]$
(respectively, $\mathcal{L}[\hat{y}]\geq \mathcal{L}[y]$)
for all admissible functions $y\in C_{k,n}^{1}([a,b];\mathbb{R})$ satisfying
the inequality $|| y-\hat{y}||_{1,\infty}<\delta$, where
\begin{equation*}
||y||_{1,\infty}:=||y^{\sigma}||_{\infty}+||y^{\Delta}||_{\infty}
+||y^{\rho}||_{\infty}+||y^{\nabla}||_{\infty}
\end{equation*}
with $||y||_{\infty}:= \sup_{t\in[a,b]_{\kappa}^{\kappa}} |y(t)|$.
\end{definition}

For brevity, in what follows we omit the argument of $H_{i}^{'}$. Precisely,
$$
H_{i}^{'}:=\frac{\partial H}{\partial
\mathcal{F}_{i}}(\mathcal{F}_{1}(y),\ldots,\mathcal{F}_{k+n}(y)),
\quad i=1,\ldots,n+k,
$$
where
\begin{equation*}
\begin{split}
\mathcal{F}_{i}(y)
&=\int\limits_{a}^{b} f_{i}[y](t)\Delta t, \hbox{ for }
i=1,\ldots,k,\\
\mathcal{F}_{i}(y)
&=\int\limits_{a}^{b}f_{i}\lb y\rb(t)\nabla t, \hbox{ for } i=k+1,\ldots,k+n.
\end{split}
\end{equation*}
Depending on the given boundary conditions, we can distinguish four different problems.
The first one is the problem $(P_{ab})$, where the two boundary conditions are specified.
To solve this problem we need an Euler--Lagrange necessary optimality condition, which
is given by Theorem~\ref{P6:th:main} below.
Next two problems --- denoted by $(P_{a})$ and $(P_{b})$ --- occur when
$y(a)$ is given and $y(b)$ is free (problem $(P_{a})$) and when
$y(a)$ is free and $y(b)$ is specified (problem $(P_{b})$).
To solve both of them we need an Euler--Lagrange equation
and one proper transversality condition. The last problem --- denoted by  $(P)$ ---
occurs when both boundary conditions are not present.
To find a solution for such a problem we need to use an Euler--Lagrange equation
and two transversality conditions (one at each time $a$ and $b$).

\begin{theorem}[The Euler--Lagrange equations in integral form]
\label{P6:th:main}
If $\hat{y}$ is a local solution
to problem \eqref{P6:eq:problem}--\eqref{P6:eq:bound:conds},
then the Euler--Lagrange equations (in integral form)
\begin{multline}
\label{P6:eq:EL:nabla}
\sum\limits_{i=1}^{k}H_{i}^{'}\cdot
\left(f_{iv}[\hat{y}](\rho(t))-\int\limits_{a}^{\rho(t)}
f_{iy}[\hat{y}](\tau)\Delta \tau \right)\\
+\sum\limits_{i=k+1}^{k+n}H_{i}^{'}\cdot
\left(f_{iv}\lb\hat{y}\rb(t)-\int\limits_{a}^{t}
f_{iy}\lb\hat{y}\rb(\tau)\nabla \tau \right) = c,
\quad  t\in\mathbb{T}_{\kappa},
\end{multline}
and
\begin{multline}
\label{P6:eq:EL:delta}
\sum\limits_{i=1}^{k}H_{i}^{'}\cdot
\left(f_{iv}[\hat{y}](t)-\int\limits_{a}^{t}
f_{iy}[\hat{y}](\tau)\Delta \tau\right)\\
+\sum\limits_{i=k+1}^{k+n}H_{i}^{'}\cdot
\left(f_{iv}\lb\hat{y}\rb(\sigma(t))-\int\limits_{a}^{\sigma(t)}
f_{iy}\lb\hat{y}\rb(\tau)\nabla \tau\right) = c,
\quad  t\in\mathbb{T}^{\kappa},
\end{multline}
hold.
\end{theorem}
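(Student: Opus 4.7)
The plan is to apply the standard first-variation method, generalized to accommodate the outer scalar $H$ and the mixing of delta and nabla integrals. Consider a perturbation $y_\varepsilon := \hat{y} + \varepsilon\eta$ with $\eta \in C^1_{k,n}([a,b];\mathbb{R})$ satisfying $\eta(a) = \eta(b) = 0$; such variations are admissible regardless of which of the conditions in \eqref{P6:eq:bound:conds} are prescribed. The first-order condition $\tfrac{d}{d\varepsilon}\mathcal{L}[y_\varepsilon]|_{\varepsilon=0} = 0$, together with the chain rule applied to $H$, gives
\begin{equation*}
\sum_{i=1}^{k} H_i' \int_a^b \bigl[f_{iy}[\hat{y}]\eta^\sigma + f_{iv}[\hat{y}]\eta^\Delta\bigr]\Delta t + \sum_{i=k+1}^{k+n} H_i' \int_a^b \bigl[f_{iy}\lb\hat{y}\rb \eta^\rho + f_{iv}\lb\hat{y}\rb \eta^\nabla\bigr]\nabla t = 0.
\end{equation*}

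Next, introduce the antiderivatives $F_i(t) := \int_a^t f_{iy}[\hat{y}](\tau)\Delta\tau$ for $i \le k$ and $G_i(t) := \int_a^t f_{iy}\lb\hat{y}\rb(\tau)\nabla\tau$ for $i > k$; by construction, $F_i(a)=G_i(a)=0$, $F_i^\Delta = f_{iy}[\hat{y}]$ and $G_i^\nabla = f_{iy}\lb\hat{y}\rb$. Apply delta integration by parts (item~6 of Theorem~\ref{th:int:prop:delta}) with $f=F_i$ and $g=\eta$ to rewrite each $\int_a^b f_{iy}[\hat{y}]\eta^\sigma\,\Delta t$, and its nabla analogue (Theorem~8.47 of \cite{BohnerDEOTS}) to rewrite each $\int_a^b f_{iy}\lb\hat{y}\rb\eta^\rho\,\nabla t$. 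The boundary contributions vanish because $\eta(a) = \eta(b) = 0$, so the identity collapses to
\begin{equation*}
\sum_{i=1}^{k} H_i' \int_a^b \bigl[f_{iv}[\hat{y}] - F_i\bigr]\eta^\Delta\,\Delta t + \sum_{i=k+1}^{k+n} H_i' \int_a^b \bigl[f_{iv}\lb\hat{y}\rb - G_i\bigr]\eta^\nabla\,\nabla t = 0.
\end{equation*}

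To extract \eqref{P6:eq:EL:delta}, convert the second sum into a delta integral via Theorem~\ref{th:int:delta:nabla} and the identity $(\eta^\nabla)^\sigma = \eta^\Delta$ from \eqref{eq:delta:nabla:sigma}; this produces a single delta integral against $\eta^\Delta$ whose coefficient must equal a constant $c$ by the delta Dubois--Reymond lemma (Lemma~\ref{lem:Dubois:Reymond:delta}). Symmetrically, to extract \eqref{P6:eq:EL:nabla}, convert the first sum into a nabla integral using Theorem~\ref{th:int:delta:nabla} together with the relation $(\eta^\Delta)^\rho = \eta^\nabla$ supplied by Theorem~\ref{th:differ:delta:nabla}, and then invoke the nabla Dubois--Reymond lemma (Lemma~\ref{lem:Dubois:Reymond:nabla}).

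The principal obstacle I anticipate lies in verifying the regularity conditions required by Theorem~\ref{th:int:delta:nabla} and by the duality in Theorem~\ref{th:differ:delta:nabla}: both results demand ordinary continuity of the relevant functions on the appropriate sets, while the standing hypotheses on $f_i$ provide only rd- or ld-continuity. This is handled by observing that the antiderivatives $F_i$ and $G_i$ are continuous, that $\eta^\Delta$ and $\eta^\nabla$ are continuous by membership in $C^1_{k,n}$, and that the composites $f_{iv}[\hat{y}] - F_i$ and $f_{iv}\lb\hat{y}\rb - G_i$ belong to $C_{rd}$ and $C_{ld}$ respectively; the shifts $t \mapsto \rho(t)$ and $t \mapsto \sigma(t)$ then preserve the required class for each Dubois--Reymond lemma, delivering \eqref{P6:eq:EL:nabla} and \eqref{P6:eq:EL:delta}.
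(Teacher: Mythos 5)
Your proposal is correct and follows essentially the same route as the proof the paper defers to in \cite{MR3040923}: compute the first variation through the chain rule on $H$, integrate by parts using the delta/nabla antiderivatives of $f_{iy}$, convert between delta and nabla integrals via Theorem~\ref{th:int:delta:nabla} and the relations $(\eta^{\nabla})^{\sigma}=\eta^{\Delta}$, $(\eta^{\Delta})^{\rho}=\eta^{\nabla}$, and finish with the two Dubois--Reymond lemmas. The regularity point you flag is exactly the one that needs care, and your resolution (continuity of the antiderivatives plus preservation of rd-/ld-continuity under the jump shifts) is the standard and adequate one.
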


\begin{proof}
See \cite{MR3040923}.
\end{proof}

For regular time scales (Definition~\ref{def:regular}),
the Euler--Lagrange equations \eqref{P6:eq:EL:nabla} and \eqref{P6:eq:EL:delta}
coincide; on a general time scale, they are different.
Such a difference is illustrated in Example~\ref{P6:ex:5}.
For such purpose let us define $\xi$ and $\chi$ by
\begin{equation}
\label{P6:eq:xi:chi}
\begin{gathered}
\xi(t):=\sum\limits_{i=1}^{k}H_{i}^{'}\cdot
\left(f_{iv}[\hat{y}](t)-\int\limits_{a}^{t}
f_{iy}[\hat{y}](\tau)\Delta \tau \right),\\
\chi(t):=\sum\limits_{i=k+1}^{k+n}H_{i}^{'}\cdot
\left(f_{iv}\lb\hat{y}\rb(t)-\int\limits_{a}^{t}
f_{iy}\lb\hat{y}\rb(\tau)\nabla \tau \right).
\end{gathered}
\end{equation}

\begin{example}
\label{P6:ex:5}
Let us consider the irregular time scale
$\mathbb{T}=\mathbb{P}_{1,1}=\bigcup\limits_{k=0}^{\infty}\left[2k,2k+1\right]$.
We show that for this time scale there is a difference between
the Euler--Lagrange equations \eqref{P6:eq:EL:nabla} and \eqref{P6:eq:EL:delta}.
The forward and backward jump operators  are given by
$$
\sigma(t)=
\begin{cases}
t,\quad t\in\bigcup\limits_{k=0}^{\infty}[2k,2k+1),\\
t+1, \quad t\in \bigcup\limits_{k=0}^{\infty}\left\{2k+1\right\},
\end{cases}
\quad
\rho(t)=
\begin{cases}
t,\quad t\in\bigcup\limits_{k=0}^{\infty}(2k,2k+1],\\
t-1, \quad t\in \bigcup\limits_{k=1}^{\infty}\left\{2k\right\},\\
0, \quad t = 0.
\end{cases}
$$
For $t = 0$ and $t\in \bigcup\limits_{k=0}^{\infty}\left(2k,2k+1\right)$,
equations \eqref{P6:eq:EL:nabla} and \eqref{P6:eq:EL:delta} coincide.
We can distinguish between them for
$t\in \bigcup\limits_{k=0}^{\infty}\left\{2k+1\right\}$
and $t\in \bigcup\limits_{k=1}^{\infty}\left\{2k\right\}$.
In what follows we use the notations \eqref{P6:eq:xi:chi}.
If $t\in \bigcup\limits_{k=0}^{\infty}\left\{2k+1\right\}$,
then we obtain from \eqref{P6:eq:EL:nabla} and \eqref{P6:eq:EL:delta}
the Euler--Lagrange equations
$\xi(t) + \chi(t) = c$ and $\xi(t) + \chi(t+1) = c$, respectively.
If $t\in \bigcup\limits_{k=1}^{\infty}\left\{2k\right\}$,
then the Euler--Lagrange equation \eqref{P6:eq:EL:nabla}
has the form $\xi(t-1) + \chi(t) = c$
while \eqref{P6:eq:EL:delta} takes the form $\xi(t) + \chi(t) = c$.
\end{example}


\subsection{Natural boundary conditions}
\label{sec:nbc}

In this section we minimize or maximize
the variational functional \eqref{P6:eq:problem},
but initial and/or terminal boundary condition $y(a)$ and/or $y(b)$ are not specified.
In what follows we obtain corresponding transversality conditions.

\begin{theorem}[Transversality condition at the initial time $t = a$]
\label{P6:th:trans:initial}
Let $\mathbb{T}$ be a time scale for which $\rho(\sigma(a))=a$.
If $\hat{y}$ is a local extremizer to \eqref{P6:eq:problem}
with $y(a)$ not specified, then
\begin{equation*}
\sum\limits_{i=1}^{k}H_{i}^{'}
\cdot
f_{iv}[\hat{y}](a)
+\sum\limits_{i=k+1}^{k+n}H_{i}^{'}\cdot
\left(
f_{iv}\lb\hat{y}\rb(\sigma(a))
- \int\limits^{\sigma(a)}_{a}f_{iy}\lb\hat{y}\rb(t)\nabla t
\right) = 0
\end{equation*}
holds together with the Euler--Lagrange equations
\eqref{P6:eq:EL:nabla} and \eqref{P6:eq:EL:delta}.
\end{theorem}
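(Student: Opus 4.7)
The plan is to derive the condition as the boundary-term consequence of the same first-variation argument used to prove the Euler--Lagrange equation \eqref{P6:eq:EL:delta} in Theorem~\ref{P6:th:main}, but now admitting variations that do not vanish at $t=a$. First, I would fix $\eta\in C^{1}_{k,n}([a,b];\mathbb{R})$ with $\eta(b)=0$ and $\eta(a)$ arbitrary, and set $\tfrac{d}{d\varepsilon}\mathcal{L}[\hat{y}+\varepsilon\eta]\big|_{\varepsilon=0}=0$. The chain rule for $H$ then gives
\begin{equation*}
\sum_{i=1}^{k}H_{i}^{'}\int_{a}^{b}\left(f_{iy}[\hat{y}]\eta^{\sigma}+f_{iv}[\hat{y}]\eta^{\Delta}\right)\Delta t
+\sum_{i=k+1}^{k+n}H_{i}^{'}\int_{a}^{b}\left(f_{iy}\lb\hat{y}\rb\eta^{\rho}+f_{iv}\lb\hat{y}\rb\eta^{\n}\right)\n t=0.
\end{equation*}

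Next, I would introduce the antiderivatives $F_i(t):=\int_a^t f_{iy}[\hat{y}](\tau)\Delta\tau$ and $G_i(t):=\int_a^t f_{iy}\lb\hat{y}\rb(\tau)\n\tau$, both vanishing at $a$, and apply delta integration by parts (Theorem~\ref{th:int:prop:delta}(6)) and its nabla analog. Since $\eta(b)=0$ and $F_i(a)=G_i(a)=0$, the explicit boundary contributions cancel and the identity collapses to
\begin{equation*}
\sum_{i=1}^{k}H_{i}^{'}\int_{a}^{b}\bigl(f_{iv}[\hat{y}]-F_i\bigr)\eta^{\Delta}\Delta t
+\sum_{i=k+1}^{k+n}H_{i}^{'}\int_{a}^{b}\bigl(f_{iv}\lb\hat{y}\rb-G_i\bigr)\eta^{\n}\n t=0.
\end{equation*}
Using Theorem~\ref{th:int:delta:nabla} to convert the nabla integral into a delta integral, together with the identity $(\eta^{\n})^{\sigma}=\eta^{\Delta}$ from \eqref{eq:delta:nabla:sigma}, this rewrites as $\int_a^b\Psi(t)\eta^{\Delta}(t)\Delta t=0$ where
\begin{equation*}
\Psi(t):=\sum_{i=1}^{k}H_{i}^{'}\bigl(f_{iv}[\hat{y}](t)-F_i(t)\bigr)+\sum_{i=k+1}^{k+n}H_{i}^{'}\bigl((f_{iv}\lb\hat{y}\rb)^{\sigma}(t)-G_i^{\sigma}(t)\bigr).
\end{equation*}

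Restricting first to the subclass with $\eta(a)=\eta(b)=0$, Lemma~\ref{lem:Dubois:Reymond:delta} forces $\Psi\equiv c$ for some $c\in\mathbb{R}$ (this is exactly the Euler--Lagrange equation \eqref{P6:eq:EL:delta}, already known to hold). Substituting back and now letting $\eta(a)$ be arbitrary yields $\int_a^b c\,\eta^{\Delta}\Delta t=c(\eta(b)-\eta(a))=-c\,\eta(a)=0$, hence $c=0$. Evaluating the resulting identity $\Psi(a)=0$ and observing that $F_i(a)=0$ while $G_i^{\sigma}(a)=\int_a^{\sigma(a)}f_{iy}\lb\hat{y}\rb(\tau)\n\tau$ produces exactly the claimed formula. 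The main technical obstacle I foresee is the bookkeeping in the delta/nabla conversion on an arbitrary time scale where $\sigma$ and $\rho$ need not be mutually inverse; the hypothesis $\rho(\sigma(a))=a$ is precisely what legitimizes the evaluation at $\sigma(a)$ and the appearance of $\int_a^{\sigma(a)}f_{iy}\lb\hat{y}\rb\n\tau$ in the final statement.
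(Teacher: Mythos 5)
Your argument is correct and is essentially the proof given in the cited source \cite{MR3040923}: the first variation with $\eta(b)=0$ and $\eta(a)$ free, delta and nabla integration by parts against the antiderivatives $F_i$, $G_i$ (whose vanishing at $a$ kills the explicit boundary terms), conversion of the nabla integral via Theorem~\ref{th:int:delta:nabla} and \eqref{eq:delta:nabla:sigma}, the Dubois--Reymond lemma to identify $\Psi\equiv c$ with the left-hand side of \eqref{P6:eq:EL:delta}, and finally $-c\,\eta(a)=0$ forcing $c=0$ so that $\Psi(a)=0$ is exactly the stated condition. The only quibble is your closing remark on the role of $\rho(\sigma(a))=a$: it is not needed to make $\sigma(a)$ or $\int_a^{\sigma(a)}f_{iy}\lb\hat{y}\rb\,\nabla t$ meaningful, but rather guarantees that $\hat{y}^{\rho}(\sigma(a))=\hat{y}(a)$, so the condition is genuinely a boundary condition at $a$; this does not affect the validity of your derivation.
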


\begin{proof}
See \cite{MR3040923}.
\end{proof}

\begin{theorem}[Transversality condition at the terminal time $t = b$]
\label{P6:th:trans:terminal}
Let $\mathbb{T}$ be a time scale for which $\sigma(\rho(b))=b$.
If $\hat{y}$ is a local extremizer to \eqref{P6:eq:problem}
with $y(b)$ not specified, then
\begin{equation*}
\sum\limits_{i=1}^{k}H_{i}^{'}\cdot
\left(
f_{iv}[\hat{y}](\rho(b))
+
\int\limits_{\rho(b)}^{b}f_{iy}[\hat{y}](t)\Delta t
\right)
+\sum\limits_{i=k+1}^{k+n}H_{i}^{'}
\cdot
f_{iv}\lb\hat{y}\rb(b) = 0
\end{equation*}
holds together with the Euler--Lagrange equations
\eqref{P6:eq:EL:nabla} and \eqref{P6:eq:EL:delta}.
\end{theorem}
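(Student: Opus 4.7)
The strategy is the standard first-variation argument, adapted to the mixed delta-nabla setting and tailored to the integral-form Euler--Lagrange equations already proved in Theorem~\ref{P6:th:main}. I consider an admissible perturbation $y = \hat{y} + \varepsilon\eta$ with $\eta \in C_{k,n}^{1}([a,b];\mathbb{R})$ satisfying $\eta(a) = 0$ (to respect a possibly prescribed initial value) but $\eta(b)$ free, and impose $\frac{d}{d\varepsilon}\mathcal{L}[\hat{y} + \varepsilon\eta]\big|_{\varepsilon=0} = 0$. The chain rule for $H$ expresses this as a sum, weighted by the scalars $H_i'$ evaluated at $\hat{y}$, of delta integrals of $f_{iy}[\hat{y}]\eta^{\sigma} + f_{iv}[\hat{y}]\eta^{\Delta}$ for $i \le k$ and of nabla integrals of $f_{iy}\lb\hat{y}\rb\eta^{\rho} + f_{iv}\lb\hat{y}\rb\eta^{\nabla}$ for $i > k$.

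The next step is to trade $\eta^{\sigma}$ and $\eta^{\rho}$ for $\eta^{\Delta}$ and $\eta^{\nabla}$. Setting $A_i(t) := \int_a^t f_{iy}[\hat{y}](\tau)\Delta\tau$ and $B_i(t) := \int_a^t f_{iy}\lb\hat{y}\rb(\tau)\nabla\tau$, both of which vanish at $a$, delta and nabla integration by parts (item~6 of Theorem~\ref{th:int:prop:delta} and its nabla analogue) annihilate the contributions at $a$ and leave the single boundary term $\eta(b)\bigl[\sum_{i\le k} H_i' A_i(b) + \sum_{i>k} H_i' B_i(b)\bigr]$. Writing $\tilde{\xi}(t) := \sum H_i'(f_{iv}[\hat{y}](t) - A_i(t))$ and $\tilde{\chi}(t) := \sum H_i'(f_{iv}\lb\hat{y}\rb(t) - B_i(t))$, the interior contribution is $\int_a^b \tilde{\xi}\,\eta^{\Delta}\Delta t + \int_a^b \tilde{\chi}\,\eta^{\nabla}\nabla t$. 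Applying Theorem~\ref{th:int:delta:nabla} together with \eqref{eq:delta:nabla:sigma} rewrites the nabla integral as $\int_a^b \tilde{\chi}^{\sigma}\eta^{\Delta}\Delta t$; the two integrals merge into $\int_a^b (\tilde{\xi} + \tilde{\chi}^{\sigma})\eta^{\Delta}\Delta t$, and by the Euler--Lagrange equation \eqref{P6:eq:EL:delta} the integrand equals a constant $c$, so the interior part collapses to $c\,\eta(b)$.

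At this point the stationarity condition reduces to $\bigl[\sum H_i' A_i(b) + \sum H_i' B_i(b) + c\bigr]\eta(b) = 0$, and arbitrariness of $\eta(b)$ forces the bracket to vanish. To identify $c$ I invoke the companion equation \eqref{P6:eq:EL:nabla} at $t = b$, which reads $\tilde{\xi}(\rho(b)) + \tilde{\chi}(b) = c$; the hypothesis $\sigma(\rho(b)) = b$ makes this evaluation consistent with \eqref{P6:eq:EL:delta} taken at $\rho(b)$. Substituting this expression for $c$, the $B_i(b)$ terms cancel outright, while $A_i(b) - A_i(\rho(b)) = \int_{\rho(b)}^{b} f_{iy}[\hat{y}](t)\Delta t$, and the bracket rearranges into exactly the transversality condition stated in the theorem.

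The main obstacle is this reconciliation between the delta and nabla sides at the free endpoint. On the interior the conversion $\int\tilde{\chi}\,\eta^{\nabla}\nabla t = \int\tilde{\chi}^{\sigma}\eta^{\Delta}\Delta t$ is routine via \eqref{eq:delta:nabla:sigma} and Theorem~\ref{th:int:delta:nabla}, and the smoothness built into Definition~\ref{P6:class} secures all the regularity needed to pass derivatives across these identities. At $t = b$, however, the two integral-form Euler--Lagrange equations \eqref{P6:eq:EL:nabla} and \eqref{P6:eq:EL:delta} differ by a jump, and only the hypothesis $\sigma(\rho(b)) = b$ ensures that $\rho(b) \in \mathbb{T}^{\kappa}$ and that $\sigma$ returns $\rho(b)$ to $b$, so that the discrepancy $A_i(b) - A_i(\rho(b))$ is exactly the delta integral $\int_{\rho(b)}^{b} f_{iy}[\hat{y}]\Delta t$ appearing in the statement rather than an extra uncontrolled boundary contribution.
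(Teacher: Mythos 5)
Your argument is correct and is essentially the proof the paper relies on (the paper defers to the cited reference, where the same standard first-variation argument is used): perturb by $\eta$ with $\eta(a)=0$ and $\eta(b)$ free, integrate by parts, reduce the interior term to $c\,\eta(b)$ via the Euler--Lagrange equation \eqref{P6:eq:EL:delta}, and identify $c=\xi(\rho(b))+\chi(b)$ using the hypothesis $\sigma(\rho(b))=b$. Your handling of the key subtlety --- that only this hypothesis lets the delta and nabla forms of the Euler--Lagrange equation be reconciled at the endpoint so that the leftover is exactly $\int_{\rho(b)}^{b}f_{iy}[\hat{y}](t)\Delta t$ --- is precisely the point of the regularity assumption at $b$, and the cancellation of the $B_i(b)$ terms is as in the original proof.
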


\begin{proof}
See \cite{MR3040923}.
\end{proof}

Several interesting results can be immediately obtained
from Theorems~\ref{P6:th:main},
\ref{P6:th:trans:initial} and \ref{P6:th:trans:terminal}.
An example of such results is given by Corollary~\ref{P6:cor:quotient}.

\begin{corollary}
\label{P6:cor:quotient}
If $\hat{y}$ is a solution to the problem
\begin{gather*}
\mathcal{L}[y]
=\frac{\int\limits_{a}^{b}f_{1}(t,y^{\sigma}(t),y^{\Delta}(t))
\Delta t}{\int\limits_{a}^{b}f_{2}(t,y^{\rho}(t),y^{\nabla}(t))\nabla t}
\longrightarrow \mathrm{extr},\\
(y(a)=y_{a}), \quad (y(b)=y_{b}),
\end{gather*}
then the Euler--Lagrange equations
$$
\frac{1}{\mathcal{F}_{2}}
\left(
f_{1v}[\hat{y}](\rho(t))-\int\limits_{a}^{\rho(t)}
f_{1y}[\hat{y}](\tau)\Delta \tau\right)
- \frac
{\mathcal{F}_{1}}
{\mathcal{F}_{2}^{2}}
\left(f_{2v}\lb\hat{y}\rb(t)-\int\limits_{a}^{t}
f_{2y}\lb\hat{y}\rb(\tau)\nabla \tau\right) = c,
$$
$t\in\T_{\kappa}$, and
$$
\frac{1}{\mathcal{F}_{2}}
\left(
f_{1v}[\hat{y}](t)-\int\limits_{a}^{t}
f_{1y}[\hat{y}](\tau)\Delta \tau\right)
-\frac{\mathcal{F}_{1}}{\mathcal{F}_{2}^{2}}
\left(f_{2v}\lb\hat{y}\rb(\sigma(t))
-\int\limits^{\sigma (t)}_{a}
f_{2y}\lb\hat{y}\rb(\tau)\nabla \tau\right)=c,
$$
$t\in\T^{\kappa}$, hold, where
$$
\mathcal{F}_{1}:={\int\limits_{a}^{b}
f_{1}(t,\hat{y}^{\sigma}(t),\hat{y}^{\Delta}(t))\Delta t}
\quad \text{ and } \quad
\mathcal{F}_{2}:={\int\limits_{a}^{b}
f_{2}(t,\hat{y}^{\rho}(t),\hat{y}^{\nabla}(t))\nabla t}.
$$
Moreover, if $y(a)$ is free and $\rho(\sigma(a))=a$, then
$$
\frac{1}{\mathcal{F}_{2}}
f_{1v}[\hat{y}](a)
-\frac{\mathcal{F}_{1}}{\mathcal{F}_{2}^{2}}
\left(f_{2v}\lb\hat{y}\rb(\sigma(a))-\int\limits_{a}^{\sigma(a)}
f_{2y}\lb\hat{y}\rb(t)\nabla t\right) =0;
$$
if $y(b)$ is free and $\sigma(\rho(b))=b$, then
$$
\frac{1}{\mathcal{F}_{2}}
\left(f_{1v}[\hat{y}](\rho(b))+\int\limits^{b}_{\rho(b)}
f_{1y}[\hat{y}](t)\Delta t\right)
-\frac{\mathcal{F}_{1}}{\mathcal{F}_{2}^{2}}
f_{2v}\lb\hat{y}\rb(b)=0.
$$
\end{corollary}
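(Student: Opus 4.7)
The plan is to recognize the quotient functional as the composition form \eqref{P6:eq:problem} with $k=1$, $n=1$, and outer function $H:\mathbb{R}^2\to\mathbb{R}$ given by $H(u_1,u_2)=u_1/u_2$, so that Theorems~\ref{P6:th:main}, \ref{P6:th:trans:initial} and \ref{P6:th:trans:terminal} apply directly. Here $\mathcal{F}_1(y)=\int_a^b f_1[y](t)\Delta t$ corresponds to the delta part and $\mathcal{F}_2(y)=\int_a^b f_2\lb y\rb(t)\nabla t$ corresponds to the nabla part.

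First I would verify the standing hypotheses of Section~\ref{subsec:EL}: since $H(u_1,u_2)=u_1/u_2$ is smooth on $\{u_2\neq 0\}$, one needs $\mathcal{F}_2(\hat y)\neq 0$ to apply the theorems, which is implicit in the problem (the denominator should not vanish along the extremum). A direct computation then gives
\begin{equation*}
H_1'=\frac{\partial H}{\partial u_1}(\mathcal{F}_1,\mathcal{F}_2)=\frac{1}{\mathcal{F}_2},
\qquad
H_2'=\frac{\partial H}{\partial u_2}(\mathcal{F}_1,\mathcal{F}_2)=-\frac{\mathcal{F}_1}{\mathcal{F}_2^{2}}.
\end{equation*}

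Next I would substitute these values of $H_1'$ and $H_2'$, together with the identifications $f_{1v},f_{1y}$ (evaluated at $[\hat y]$) in the delta sum and $f_{2v},f_{2y}$ (evaluated at $\lb\hat y\rb$) in the nabla sum, into the two Euler--Lagrange equations \eqref{P6:eq:EL:nabla} and \eqref{P6:eq:EL:delta} of Theorem~\ref{P6:th:main}. The sums in \eqref{P6:eq:EL:nabla}--\eqref{P6:eq:EL:delta} collapse to a single delta term and a single nabla term, yielding exactly the two displayed Euler--Lagrange equations of the corollary on $\mathbb{T}_\kappa$ and $\mathbb{T}^\kappa$, respectively.

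Finally, for the natural boundary conditions, I would apply Theorem~\ref{P6:th:trans:initial} (under the assumption $\rho(\sigma(a))=a$) and Theorem~\ref{P6:th:trans:terminal} (under $\sigma(\rho(b))=b$) with the same substitutions for $H_1'$ and $H_2'$. Each sum again reduces to a single delta and a single nabla contribution, producing precisely the two stated transversality conditions at $t=a$ and $t=b$. The only substantive point, rather than an obstacle, is to keep track of which integrand is evaluated under $[\cdot]$ versus $\lb\cdot\rb$ and of the correct shift by $\sigma$ or $\rho$ coming from \eqref{P6:eq:EL:nabla}--\eqref{P6:eq:EL:delta}; once the quotient rule values of $H_1'$ and $H_2'$ are plugged in, the statement is essentially a specialization of the general theorems.
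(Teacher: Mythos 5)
Your proposal is correct and matches the paper's intended argument: the paper derives Corollary~\ref{P6:cor:quotient} precisely as an immediate specialization of Theorems~\ref{P6:th:main}, \ref{P6:th:trans:initial} and \ref{P6:th:trans:terminal} with $k=n=1$ and $H(u_1,u_2)=u_1/u_2$, so that $H_1'=1/\mathcal{F}_2$ and $H_2'=-\mathcal{F}_1/\mathcal{F}_2^{2}$. Your additional remark that $\mathcal{F}_2(\hat{y})\neq 0$ must hold for the quotient to be well defined is a sensible observation that the paper leaves implicit.
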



\subsection{Isoperimetric problems}
\label{sub:sec:iso:p}

Let us now consider the general delta--nabla composition
isoperimetric problem on time scales
subject to boundary conditions.
The problem consists of extremizing
\begin{multline}
\label{P6:eq:iso}
\mathcal L[y]=H\left(\int\limits_{a}^{b}f_{1}(t,y^{\sigma}(t),y^{\Delta}(t))\Delta t,
\ldots,\int\limits_{a}^{b}f_{k}(t,y^{\sigma}(t),y^{\Delta}(t))\Delta t,\right.\\
\left.\int\limits_{a}^{b}f_{k+1}(t,y^{\rho}(t),y^{\nabla}(t))\nabla t,\ldots,
\int\limits_{a}^{b}f_{k+n}(t,y^{\rho}(t),y^{\nabla}(t)) \nabla t \right)
\end{multline}
in the class of functions $y\in C^1_{k+m,n+p}$ satisfying given boundary conditions
\begin{equation}
\label{P6:eq:bound:conds:iso}
y(a)=y_{a},\quad y(b)=y_{b},
\end{equation}
and a generalized isoperimetric constraint
\begin{multline}
\label{P6:eq:iso:constraint}
\mathcal{K}[y]
=P\left(\int\limits_{a}^{b}g_{1}(t,y^{\sigma}(t),y^{\Delta}(t))\Delta t,
\ldots,\int\limits_{a}^{b}g_{m}(t,y^{\sigma}(t),y^{\Delta}(t))\Delta t,\right.\\
\left.\int\limits_{a}^{b}g_{m+1}(t,y^{\rho}(t),y^{\nabla}(t))\nabla t,\ldots,
\int\limits_{a}^{b}g_{m+p}(t,y^{\rho}(t),y^{\nabla}(t)) \nabla t \right)=d,
\end{multline}
where $y_{a},y_{b},d\in\mathbb{R}$. We assume that:
\begin{enumerate}
\item
the functions $H:\mathbb{R}^{n+k}\rightarrow\mathbb{R}$
and $P:\mathbb{R}^{m+p}\rightarrow\mathbb{R}$
have continuous partial derivatives with respect to all their arguments,
which we denote by $H_{i}^{'}$, $i=1,\ldots,n+k$,
and $P_{i}^{'}$, $i=1,\ldots,m+p$;
\item
functions $(t,y,v)\rightarrow f_{i}(t,y,v)$,
$i=1,\ldots, n+k$, and
$(t,y,v)\rightarrow g_{j}(t,y,v)$, $j=1,\ldots,m+p$,
from $[a,b]\times\mathbb{R}^{2}$ to $\mathbb{R}$,
have partial continuous derivatives with respect to $y$ and $v$ uniformly in
$t\in [a,b]$, which we denote by $f_{iy}$, $f_{iv}$, and $g_{jy}, g_{jv}$;
\item for all $y\in C_{k+m,n+p}^{1}$,
$f_{i}$, $f_{iy}$, $f_{iv}$ and $g_{j},g_{jy}$, $g_{jv}$
are rd-continuous in $t\in [a,b]^{\kappa}$,
$i=1,\ldots,k$, $j=1,\ldots,m$,
and ld-continuous in $t\in [a,b]_{\kappa}$,
$i=k+1,\ldots,k+n$, $j=m+1,\ldots,m+p$.
\end{enumerate}
A function $y\in C^{1}_{k+m, n+p}$ is said to be admissible provided
it satisfies the boundary conditions \eqref{P6:eq:bound:conds:iso}
and the isoperimetric constraint \eqref{P6:eq:iso:constraint}.
For brevity, we omit the argument of $P_{i}^{'}$:
$P_{i}^{'}:=\frac{\partial P}{\partial \mathcal{G}_{i}}(\mathcal{G}_{1}(\hat{y}),
\ldots,\mathcal{G}_{m+p}(\hat{y}))$ for $i=1,\ldots,m+p$,
with
$$
\mathcal{G}_{i}(\hat{y})=\int\limits_{a}^{b}
g_{i}(t,\hat{y}^{\sigma}(t),\hat{y}^{\Delta}(t))\Delta t,
\quad i=1,\ldots,m,
$$
and
$$
\mathcal{G}_{i}(\hat{y})=\int\limits_{a}^{b}
g_{i}(t,\hat{y}^{\rho}(t),\hat{y}^{\nabla}(t))\nabla t,
\quad i=m+1,\ldots,m+p.
$$

\begin{definition}
We say that an admissible function $\hat{y}$ is a local minimizer
(respectively, a local maximizer) to the isoperimetric problem
\eqref{P6:eq:iso}--\eqref{P6:eq:iso:constraint}, if there exists a $\delta >0$
such that $\mathcal{L}[\hat{y}]\leqslant \mathcal{L}[y]$
(respectively, $\mathcal{L}[\hat{y}]\geqslant \mathcal{L}[y]$)
for all admissible functions $y\in C_{k+m,n+p}^{1}$
satisfying the inequality $||y-\hat{y}||_{1,\infty}<\delta$.
\end{definition}

Let us define $u$ and $w$ by
\begin{equation}
\label{P6:eq:u:w}
\begin{gathered}
u(t):=
\sum\limits_{i=1}^{m}P_{i}^{'}\cdot
\left(g_{iv}[\hat{y}](t)-\int\limits_{a}^{t}
g_{iy}[\hat{y}](\tau)\Delta \tau \right),\\
w(t):=
\sum\limits_{i=m+1}^{m+p}P_{i}^{'}\cdot
\left(g_{iv}\lb\hat{y}\rb(t)-\int\limits_{a}^{t}
g_{iy}\lb\hat{y}\rb(\tau)\nabla \tau \right).
\end{gathered}
\end{equation}

\begin{definition}
An admissible function $\hat{y}$ is said to be an extremal for $\mathcal{K}$ if
$u(t) + w(\sigma(t)) = const$ and $u(\rho(t)) + w(t) = const$ for all $t\in[a,b]_\kappa^\kappa$.
An extremizer (i.e., a local minimizer or a local maximizer)
to problem \eqref{P6:eq:iso}--\eqref{P6:eq:iso:constraint} that is not an extremal for $\mathcal{K}$
is said to be a normal extremizer; otherwise (i.e., if it is an extremal for $\mathcal{K}$),
the extremizer is said to be abnormal.
\end{definition}

\begin{theorem}[Optimality condition to the
isoperimetric problem \eqref{P6:eq:iso}--\eqref{P6:eq:iso:constraint}]
\label{P6:th:conds:iso}
Let $\chi$ and $\xi$ be given as in \eqref{P6:eq:xi:chi}, and
$u$ and $w$ be given as in \eqref{P6:eq:u:w}.
If $\hat{y}$ is a normal extremizer to the isoperimetric problem
\eqref{P6:eq:iso}--\eqref{P6:eq:iso:constraint}, then there exists
a real number $\lambda$ such that
\begin{enumerate}
\item $\xi^{\rho}(t)+\chi(t)-\lambda\left(u^{\rho}(t)+w(t)\right)= const$;
\item $\xi(t)+\chi^{\sigma}(t)-\lambda\left(u^{\rho}(t)+w(t)\right)= const$;
\item $\xi^{\rho}(t)+\chi(t)-\lambda\left(u(t)+w^{\sigma}(t)\right)= const$;
\item $\xi(t)+\chi^{\sigma}(t)-\lambda\left(u(t)+w^{\sigma}(t)\right)= const$;
\end{enumerate}
for all $t\in [a,b]^{\kappa}_{\kappa}$.
\end{theorem}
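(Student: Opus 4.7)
The plan is to apply a Lagrange multiplier technique, reducing the constrained problem to an unconstrained one of the form treated in Theorem~\ref{P6:th:main}. First, I would consider two-parameter variations of the extremizer $\hat y$: set $y = \hat y + \varepsilon_1 \eta_1 + \varepsilon_2 \eta_2$, where $\eta_1,\eta_2 \in C^1_{k+m,n+p}([a,b];\mathbb{R})$ vanish at the endpoints so that $y$ satisfies \eqref{P6:eq:bound:conds:iso}. Define the real-valued functions $\widehat{\mathcal L}(\varepsilon_1,\varepsilon_2) := \mathcal L[y]$ and $\widehat{\mathcal K}(\varepsilon_1,\varepsilon_2) := \mathcal K[y]-d$. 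Since $\hat y$ is a local extremizer of \eqref{P6:eq:iso}--\eqref{P6:eq:iso:constraint}, the point $(0,0)$ is a local extremum of $\widehat{\mathcal L}$ subject to the scalar constraint $\widehat{\mathcal K}=0$.

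Next I would use normality. By hypothesis $\hat y$ is not an extremal of $\mathcal K$, so I can select $\eta_2$ making $\partial \widehat{\mathcal K}/\partial \varepsilon_2(0,0) \neq 0$. Indeed, computing this partial derivative via the chain rule for $P$ and the delta/nabla integration-by-parts formulas in Theorem~\ref{th:int:prop:delta}(6--7) (and their nabla analogues) produces precisely expressions of the form $\int \eta_2^\Delta[\,u(t)\text{-stuff}\,]\,\Delta t + \int \eta_2^\nabla[\,w(t)\text{-stuff}\,]\,\nabla t$; if this vanished for every admissible $\eta_2$, the Dubois--Reymond lemmas (Lemmas~\ref{lem:Dubois:Reymond:delta} and \ref{lem:Dubois:Reymond:nabla}) would force $u(t)+w^\sigma(t)$ (or $u^\rho(t)+w(t)$) to be constant, contradicting the non-extremality of $\hat y$ for $\mathcal K$. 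The classical finite-dimensional Lagrange multiplier rule then yields $\lambda \in \mathbb{R}$ with $\nabla(\widehat{\mathcal L} - \lambda \widehat{\mathcal K})(0,0)=0$. Equivalently, $\hat y$ is a free local extremizer of the auxiliary composition functional $\mathcal L - \lambda \mathcal K$, which itself has exactly the structure covered by Theorem~\ref{P6:th:main}.

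I would then invoke Theorem~\ref{P6:th:main} applied to $\mathcal L - \lambda \mathcal K$. Linearity of the chain-rule factors $H_i'$, $P_i'$ and of the delta and nabla integrals makes the resulting integral Euler--Lagrange expressions split cleanly into the contributions $\xi,\chi$ coming from $\mathcal L$ (as in \eqref{P6:eq:xi:chi}) and the analogous contributions $u,w$ coming from $\mathcal K$ (as in \eqref{P6:eq:u:w}). Equation \eqref{P6:eq:EL:nabla} applied to $\mathcal L - \lambda\mathcal K$ on $t \in \mathbb{T}_\kappa$ yields condition~(1), while \eqref{P6:eq:EL:delta} on $t \in \mathbb{T}^\kappa$ yields condition~(4).

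The main obstacle is obtaining the \emph{mixed} conditions (2) and (3), in which the delta-form is used for one of $\mathcal L, \mathcal K$ while the nabla-form is used for the other. To get these I would not simply invoke Theorem~\ref{P6:th:main} twice, but return to the first-variation identity $\partial(\widehat{\mathcal L}-\lambda \widehat{\mathcal K})/\partial\varepsilon_1(0,0)=0$ and perform the integration-by-parts step \emph{independently} on the delta-integral and nabla-integral terms: because Theorem~\ref{th:int:prop:delta}(6--7) offers two symmetric forms, I can shift by $\sigma$ on the delta-part while shifting by $\rho$ on the nabla-part (or conversely). Applying the appropriate Dubois--Reymond lemma in each case then produces the remaining two equalities on $[a,b]^\kappa_\kappa$, the restriction to $[a,b]^\kappa_\kappa$ being precisely what is needed to ensure both $\sigma(t)$ and $\rho(t)$ are interior and the four expressions are well defined.
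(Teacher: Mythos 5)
Your overall strategy coincides with the one the paper relies on (the survey defers to the proof of Theorem~3.9 in \cite{MR3040923}, which is exactly this argument): two-parameter variations $\hat y+\varepsilon_1\eta_1+\varepsilon_2\eta_2$ vanishing at the endpoints, the observation that normality together with Lemmas~\ref{lem:Dubois:Reymond:delta} and \ref{lem:Dubois:Reymond:nabla} produces an $\eta_2$ with $\partial\widehat{\mathcal K}/\partial\varepsilon_2(0,0)\neq 0$, the finite-dimensional Lagrange multiplier rule, and then Dubois--Reymond applied to the vanishing first variation of $\mathcal L-\lambda\mathcal K$. Conditions (1) and (4) do come out exactly as you describe. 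Two small cautions: the multiplier rule only gives that the first variation of $\mathcal L-\lambda\mathcal K$ vanishes at $\hat y$, not that $\hat y$ is a ``free local extremizer'' of it, so you should invoke the computation inside the proof of Theorem~\ref{P6:th:main} rather than that theorem as a black box; and $\lambda$ must be fixed once, from a single good $\eta_2$, before letting $\eta_1$ range over all admissible variations.

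The genuine gap is in your derivation of the mixed conditions (2) and (3). Pairing the delta representation of $\partial\widehat{\mathcal L}/\partial\varepsilon_1(0,0)$ with the nabla representation of $\partial\widehat{\mathcal K}/\partial\varepsilon_1(0,0)$ leaves you with
\begin{equation*}
\int\limits_a^b\bigl(\xi(t)+\chi^{\sigma}(t)\bigr)\eta_1^{\Delta}(t)\,\Delta t
-\lambda\int\limits_a^b\bigl(u^{\rho}(t)+w(t)\bigr)\eta_1^{\nabla}(t)\,\nabla t=0,
\end{equation*}
a genuinely mixed delta--nabla identity to which neither Dubois--Reymond lemma applies directly. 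To reduce it to a single integral you must convert one of the two integrals to the other type via Theorem~\ref{th:int:delta:nabla} and \eqref{eq:delta:nabla:sigma}, and that conversion composes the corresponding block with $\sigma$ or $\rho$ one more time: you obtain $u(\rho(\sigma(t)))+w(\sigma(t))$ in the all-delta reduction, or $\xi(\rho(t))+\chi(\sigma(\rho(t)))$ in the all-nabla reduction, rather than the $u^{\rho}(t)+w(t)$ and $\xi(t)+\chi^{\sigma}(t)$ that appear in (2) and (3). These coincide only at points where $\rho(\sigma(t))=t$, respectively $\sigma(\rho(t))=t$, which can fail on $[a,b]^{\kappa}_{\kappa}$ for irregular time scales (e.g.\ at a right-dense, left-scattered point). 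So ``shift by $\sigma$ on the delta part and by $\rho$ on the nabla part, then apply the appropriate Dubois--Reymond lemma'' does not close as written; you need either an argument that the extra shift is harmless on $[a,b]^{\kappa}_{\kappa}$, or a different route to (2)--(3) --- for instance deducing them from (1) and (4) by showing that $\lambda\bigl[(u(t)+w^{\sigma}(t))-(u^{\rho}(t)+w(t))\bigr]$ is constant, which is itself not automatic.
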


\begin{proof}
See proof of Theorem~3.9 in \cite{MR3040923}.
\end{proof}


\subsection{Illustrative examples}
\label{sec:examples}

In this section we consider three examples which illustrate
the results of Theorem~\ref{P6:th:main} and Theorem~\ref{P6:th:conds:iso}.
We begin with a nonautonomous problem.

\begin{example}
\label{P6:ex:1}
Consider the problem
\begin{equation}
\label{P6:eq:9}
\begin{gathered}
\mathcal{L}[y]=
\frac{\int\limits_{0}^{1} t y^{\Delta}(t) \Delta t}
{\int\limits_{0}^{1}(y^{\nabla}(t))^{2}\nabla t}
\longrightarrow \min, \\
y(0)=0, \quad y(1)=1.
\end{gathered}
\end{equation}
If $y$ is a local minimizer to problem \eqref{P6:eq:9},
then the Euler--Lagrange equations of Corollary~\ref{P6:cor:quotient} must hold, i.e.,
$$
\frac{1}{\mathcal{F}_{2}}\rho(t)-2\frac{\mathcal{F}_{1}}{\mathcal{F}_{2}^{2}}
y^{\nabla}(t)=c, \quad t \in \mathbb{T}_{\kappa},
$$
and
$$
\frac{1}{\mathcal{F}_{2}}t-2\frac{\mathcal{F}_{1}}{\mathcal{F}_{2}^{2}}
y^{\nabla}(\sigma(t))=c, \quad t \in \mathbb{T}^{\kappa},
$$
where
$\mathcal{F}_{1}:=\mathcal{F}_{1}(y)=\int\limits_{0}^{1}t y^{\Delta}(t)\Delta t$
and $\mathcal{F}_{2}:=\mathcal{F}_{2}(y)=\int\limits_{0}^{1}(y^{\nabla}(t))^{2}\nabla t$.
Let us consider the second equation. Using \eqref{eq:delta:nabla:sigma}
of Theorem~\ref{th:differ:delta:nabla}, it can be written as
\begin{equation}
\label{P6:eq:10}
\frac{1}{\mathcal{F}_{2}}t-2\frac{\mathcal{F}_{1}}{\mathcal{F}_{2}^{2}}y^{\Delta}(t)=c,
\quad t \in \mathbb{T}^{\kappa}.
\end{equation}
Solving \eqref{P6:eq:10} subject to the boundary conditions $y(0)=0$ and $y(1)=1$ gives
\begin{equation}
\label{P6:eq:ex:1:sol}
y(t)=
\frac{1}{2Q}\int\limits_{0}^{t}\tau\Delta\tau
-t\left(\frac{1}{2Q}\int\limits_{0}^{1}\tau\Delta\tau -1\right),
\quad t \in \mathbb{T}^{\kappa},
\end{equation}
where $Q:=\frac{\mathcal{F}_{1}}{\mathcal{F}_{2}}$.
Therefore, the solution depends on the time scale.
Let us consider two examples: $\mathbb{T}=\mathbb{R}$
and $\mathbb{T}=\left\{0,\frac{1}{2},1\right\}$.
On $\mathbb{T}=\mathbb{R}$, from \eqref{P6:eq:ex:1:sol}
we obtain
\begin{equation}
\label{P6:eq:11}
y(t)=\frac{1}{4Q}t^{2}+\frac{4Q-1}{4Q}t,
\quad\quad y^{\Delta}(t) = y^{\nabla}(t) = y'(t)=\frac{1}{2Q}t+\frac{4Q-1}{4Q}
\end{equation}
as solution of \eqref{P6:eq:10}. Substituting \eqref{P6:eq:11} into
$\mathcal{F}_{1}$ and $\mathcal{F}_{2}$ gives
$\mathcal{F}_{1}=\frac{12Q+1}{24Q}$ and
$\mathcal{F}_{2}=\frac{48Q^{2}+1}{48Q^{2}}$, that is,
\begin{equation}
\label{P6:eq:Q:1}
Q=\frac{2Q(12Q+1)}{48Q^{2}+1}.
\end{equation}
Solving equation \eqref{P6:eq:Q:1} we get
$Q\in\left\{\frac{3-2\sqrt{3}}{12},\frac{3+2\sqrt{3}}{12}\right\}$.
Because \eqref{P6:eq:9} is a minimizing problem,
we select $Q=\frac{3-2\sqrt{3}}{12}$ and we get the extremal
\begin{equation}
\label{P6:eq:12}
y(t)=-(3+2\sqrt{3}) t^{2} + (4 + 2 \sqrt{3}) t.
\end{equation}
If $\mathbb{T}=\left\{0,\frac{1}{2},1\right\}$,
then from \eqref{P6:eq:ex:1:sol} we obtain
$y(t)=\frac{1}{8Q}\sum\limits_{k=0}^{2t-1}k+\frac{8Q-1}{8Q}t$, that is,
\begin{equation*}
y(t)=
\begin{cases}
0, & \text{ if } t=0,\\
\frac{8Q-1}{16Q}, & \text{ if } t=\frac{1}{2},\\
1, & \text{ if } t=1.
\end{cases}
\end{equation*}
Direct calculations show that
\begin{equation}
\label{P6:eq:13}
\begin{gathered}
y^{\Delta}(0)=\frac{y(\frac{1}{2})-y(0)}{\frac{1}{2}}=\frac{8Q-1}{8Q},
\quad y^{\Delta}\left(\frac{1}{2}\right)
=\frac{y(1)-y(\frac{1}{2})}{\frac{1}{2}}=\frac{8Q+1}{8Q},\\
y^{\nabla}\left(\frac{1}{2}\right)
=\frac{y(\frac{1}{2})-y(0)}{\frac{1}{2}}=\frac{8Q-1}{8Q},
\quad y^{\nabla}(1)=\frac{y(1)-y(\frac{1}{2})}{\frac{1}{2}}
=\frac{8Q+1}{8Q}.
\end{gathered}
\end{equation}
Substituting \eqref{P6:eq:13} into the integrals
$\mathcal{F}_{1}$ and $\mathcal{F}_{2}$ gives
\begin{equation*}
\mathcal{F}_{1}=\frac{8Q+1}{32Q},
\quad
\mathcal{F}_{2}=\frac{64Q^{2}+1}{64Q^{2}},
\quad
Q=\frac{\mathcal{F}_{1}}{\mathcal{F}_{2}}=\frac{2Q(8Q+1)}{64Q^{2}+1}.
\end{equation*}
Thus, we obtain the equation $64Q^{2}-16Q-1=0$.
The solutions to this equation are:
$Q\in \left\{\frac{1-\sqrt{2}}{8}, \frac{1+\sqrt{2}}{8}\right\}$.
We are interested in the minimum value $Q$, so we select
$Q = \frac{1+\sqrt{2}}{8}$ to get the extremal
\begin{equation}
\label{P6:eq:14}
y(t)
=\begin{cases}
0, & \hbox{ if } t=0,\\
1-\frac{\sqrt{2}}{2}, & \hbox{ if } t=\frac{1}{2},\\
1, &  \hbox{ if }t=1.
\end{cases}
\end{equation}
Note that the extremals \eqref{P6:eq:12} and \eqref{P6:eq:14} are different:
for \eqref{P6:eq:12} one has $x(1/2) = \frac{5}{4} + \frac{\sqrt{3}}{2}$.
\end{example}

In the previous example, the variational functional is given
by the ratio of a delta and a nabla integral. Now we discuss
a variational problem where the composition is expressed by
the product of three time-scale integrals.

\begin{example}
\label{P6:ex:3}
Consider the problem
\begin{equation}
\label{P6:eq:18}
\begin{gathered}
\mathcal{L}[y]=
\left(\int\limits_{0}^{3} t y^{\Delta}(t) \Delta t\right)
\left(\int\limits_{0}^{3} y^{\Delta}(t)\left(1+t\right)\Delta t\right)
\left(\int\limits_{0}^{3}\left[\left(y^{\nabla}(t)\right)^{2}+t\right]\nabla t\right)
\longrightarrow \min,\\
y(0)=0,\quad y(3)=3.
\end{gathered}
\end{equation}
If $y$ is a local minimizer to problem \eqref{P6:eq:18},
then the Euler--Lagrange equations must hold, and we can write that
\begin{equation}
\label{P6:eq:19}
\left(\mathcal{F}_{1}\mathcal{F}_{3}+\mathcal{F}_{2}\mathcal{F}_{3}\right)t
+\mathcal{F}_{1}\mathcal{F}_{3}+2\mathcal{F}_{1}\mathcal{F}_{2}
y^{\nabla}(\sigma(t))=c, \quad t \in \mathbb{T}^{\kappa},
\end{equation}
where $c$ is a constant,
$\mathcal{F}_{1}:=\mathcal{F}_{1}(y)
=\int\limits_{0}^{3} t y^{\Delta}(t) \Delta t$, $\mathcal{F}_{2}:=\mathcal{F}_{2}(y)
=\int\limits_{0}^{3} y^{\Delta}(t)\left(1+t\right)\Delta t$,
and $\mathcal{F}_{3}:=\mathcal{F}_{3}(y)
=\int\limits_{0}^{3}\left[\left(y^{\nabla}(t)\right)^{2}+t\right]\nabla t$.
Using relation \eqref{eq:delta:nabla:sigma}, we can write \eqref{P6:eq:19} as
\begin{equation}
\label{P6:eq:20}
\left(\mathcal{F}_{1}\mathcal{F}_{3}+\mathcal{F}_{2}\mathcal{F}_{3}\right)t
+\mathcal{F}_{1}\mathcal{F}_{3}+2\mathcal{F}_{1}\mathcal{F}_{2}y^{\Delta}(t)=c,
\quad t \in \mathbb{T}^{\kappa}.
\end{equation}
Using the boundary conditions $y(0)=0$ and $y(3)=3$,
from \eqref{P6:eq:20} we get that
\begin{equation}
\label{P6:eq:ex:3:sol}
y(t)=\left(1+\frac{Q}{3}\int\limits_{0}^{3}\tau\Delta \tau\right) t
-Q \int\limits_{0}^{t}\tau\Delta \tau,
\quad t \in \mathbb{T}^{\kappa},
\end{equation}
where $Q=\frac{\mathcal{F}_{1}\mathcal{F}_{3}
+\mathcal{F}_{2}\mathcal{F}_{3}}{2\mathcal{F}_{1}\mathcal{F}_{2}}$.
Therefore, the solution depends on the time scale.
Let us consider $\mathbb{T}=\mathbb{R}$ and $\mathbb{T}
=\left\{0,\frac{1}{2},1,\frac{3}{2},2,\frac{5}{2},3\right\}$.
On $\mathbb{T}=\mathbb{R}$, expression \eqref{P6:eq:ex:3:sol} gives
\begin{equation}
\label{P6:eq:21}
y(t)=\left(\frac{2+3Q}{2}\right) t - \frac{Q}{2}t^{2},
\quad y^{\Delta}(t)= y^{\nabla}(t) = y'(t) = \frac{2+3Q}{2}-Qt
\end{equation}
as solution of \eqref{P6:eq:20}. Substituting \eqref{P6:eq:21}
into $\mathcal{F}_{1}$, $\mathcal{F}_{2}$ and $\mathcal{F}_{3}$ gives:
$$
\mathcal{F}_{1}=\frac{18-9Q}{4},
\quad
\mathcal{F}_{2}=\frac{30-9Q}{4},
\quad
\mathcal{F}_{3}=\frac{9Q^{2}+30}{4}.
$$
Solving equation $9Q^{3} - 36 Q^{2} + 45 Q - 40=0$, one finds
the solution
$$
Q = \frac{1}{27}\left[ 36+ \sqrt[3]{24786-729\sqrt{1155}}
+9\sqrt[3]{34+\sqrt{1155}}\right]\approx 2,7755
$$
and the extremal $y(t)=5,16325t-1,38775t^{2}$.

Let us consider now the time scale $\mathbb{T}
=\left\{0,\frac{1}{2},1,\frac{3}{2},2,\frac{5}{2},3\right\}$.
From \eqref{P6:eq:ex:3:sol}, we obtain
\begin{equation}
\label{P6:eq:22}
y(t)=\left(\frac{4+5Q}{4}\right)t-\frac{Q}{4}
\sum\limits_{k=0}^{2t-1}k
=
\begin{cases}
0, & \hbox{ if } t=0,\\
\frac{4+5Q}{8}, & \hbox{ if } t=\frac{1}{2},\\
1+Q, &  \hbox{ if } t=1,\\
\frac{12+9Q}{8}, & \hbox{ if } t=\frac{3}{2},\\
2+Q, & \hbox{ if } t=2,\\
\frac{20+5Q}{8}, & \hbox{ if } t=\frac{5}{2},\\
3, & \hbox{ if } t=3
\end{cases}
\end{equation}
as solution of \eqref{P6:eq:20}. Substituting \eqref{P6:eq:22}
into $\mathcal{F}_{1}$, $\mathcal{F}_{2}$
and $\mathcal{F}_{3}$, yields
\begin{equation*}
\mathcal{F}_{1}=\frac{60-35Q}{16},
\quad \mathcal{F}_{2}=\frac{108-35Q}{16},
\quad \mathcal{F}_{3}=\frac{35Q^{2}+132}{16}.
\end{equation*}
Solving equation $245Q^{3}-882Q^{2}+1110-\frac{5544}{5}=0$,
we get $Q\approx 2,5139$ and the extremal
\begin{equation}
\label{eq:extremal:50}
y(t)=
\begin{cases}
0, & \hbox{ if } t=0,\\
2,0711875, & \hbox{ if } t=\frac{1}{2},\\
3,5139, &  \hbox{ if } t=1,\\
4,3281375, & \hbox{ if } t=\frac{3}{2},\\
4,5139, & \hbox{ if } t=2,\\
4,0711875, & \hbox{ if } t=\frac{5}{2},\\
3, & \hbox{ if } t=3
\end{cases}
\end{equation}
for problem \eqref{P6:eq:18} on
$\mathbb{T}=\left\{0,\frac{1}{2},1,\frac{3}{2},2,\frac{5}{2},3\right\}$.

In order to illustrate the difference between composition
of mixed delta-nabla integrals and pure delta or nabla situations,
we consider now two variants of problem \eqref{P6:eq:18}:
(i) the first consisting of delta operators only:
\begin{equation}
\label{survey:1}
\begin{gathered}
\mathcal{L}[y]=
\left(\int\limits_{0}^{3} t y^{\Delta}(t) \Delta t\right)
\left(\int\limits_{0}^{3} y^{\Delta}(t)\left(1+t\right)\Delta t\right)
\left(\int\limits_{0}^{3}\left[\left(y^{\Delta}(t)\right)^{2}+t\right]\Delta t\right)
\longrightarrow \min;
\end{gathered}
\end{equation}
(ii) the second of nabla operators only:
\begin{equation}
\label{survey:2}
\begin{gathered}
\mathcal{L}[y]=
\left(\int\limits_{0}^{3} t y^{\nabla}(t) \nabla t\right)
\left(\int\limits_{0}^{3} y^{\nabla}(t)\left(1+t\right)\nabla t\right)
\left(\int\limits_{0}^{3}\left[\left(y^{\nabla}(t)\right)^{2}+t\right]\nabla t\right)
\longrightarrow \min.
\end{gathered}
\end{equation}
Both problems (i) and (ii) are subject to the same boundary
conditions as in \eqref{P6:eq:18}:
\begin{equation}
\label{eq:boundaryConditions}
y(0)=0,\quad y(3)=3.
\end{equation}
All three problems \eqref{P6:eq:18}, \eqref{survey:1} and \eqref{eq:boundaryConditions},
and \eqref{survey:2}--\eqref{eq:boundaryConditions},
coincide in $\mathbb{R}$. Consider, as before, the time scale
$\mathbb{T}=\left\{0,\frac{1}{2},1,\frac{3}{2},2,\frac{5}{2},3\right\}$.
Recall that problem \eqref{P6:eq:18} has extremal \eqref{eq:extremal:50}.
(i) Now, let us consider the delta problem \eqref{survey:1} and \eqref{eq:boundaryConditions}.
We obtain
\begin{equation*}
\mathcal{F}_{1}=\frac{60-35Q}{16},
\quad \mathcal{F}_{2}=\frac{108-35Q}{16},
\quad \mathcal{F}_{3}=\frac{35Q^{2}+108}{16}
\end{equation*}
and the equation $245Q^{3}-882Q^{2}+1026-\frac{5436}{5}=0$.
Its numerical solution $Q\approx 2,5216$ entails the extremal
\begin{equation*}
y(t)=
\begin{cases}
0, & \hbox{ if } t=0,\\
2,076, & \hbox{ if } t=\frac{1}{2},\\
3,5216, &  \hbox{ if } t=1,\\
4,3368, & \hbox{ if } t=\frac{3}{2},\\
4,5216, & \hbox{ if } t=2,\\
4,076, & \hbox{ if } t=\frac{5}{2},\\
3, & \hbox{ if } t=3.
\end{cases}
\end{equation*}
(ii) In the latter nabla problem \eqref{survey:2}--\eqref{eq:boundaryConditions} we have
\begin{equation*}
\mathcal{F}_{1}=\frac{84-35Q}{16},
\quad \mathcal{F}_{2}=\frac{132-35Q}{16},
\quad \mathcal{F}_{3}=\frac{35Q^{2}+132}{16}
\end{equation*}
and the equation $175Q^{3}-810Q^{2}+1122-\frac{7128}{7}=0$.
Using its numerical solution $Q\approx 3,1097$ we get the extremal
\begin{equation*}
y(t)=
\begin{cases}
0, & \hbox{ if } t=0,\\
2,4942, & \hbox{ if } t=\frac{1}{2},\\
4,1907, & \hbox{ if } t=1,\\
5,0895, & \hbox{ if } t=\frac{3}{2},\\
5,1907, & \hbox{ if } t=2,\\
4,4942, & \hbox{ if } t=\frac{5}{2},\\
3, & \hbox{ if } t=3.
\end{cases}
\end{equation*}
\end{example}

Finally, we apply the results of Section~\ref{sub:sec:iso:p}
to an isoperimetric problem.

\begin{example}
\label{P6:ex:4}
Let us consider the problem of extremizing
\begin{equation*}
\mathcal{L}[y]=
\frac{
\int\limits_{0}^{1}(y^{\Delta}(t))^{2}\Delta t}
{\int\limits_{0}^{1} ty^{\nabla}(t)\nabla t}
\end{equation*}
subject to the boundary conditions
$y(0)=0$ and $y(1)=1$ and the isoperimetric constraint
$$
\mathcal{K}[y]=\int\limits_{0}^{1} ty^{\nabla}(t)\nabla t=1.
$$
Applying Theorem~\ref{P6:th:conds:iso}, we get the nabla differential equation
\begin{equation}
\label{P6:eq:23}
\frac{2}{\mathcal{F}_{2}}y^{\nabla}(t)
- \left(\lambda + \frac{\mathcal{F}_{1}}{(\mathcal{F}_{2})^{2}}\right) t = c,
\quad t \in \mathbb{T}^{\kappa}_{\kappa}.
\end{equation}
Solving this equation, we obtain
\begin{equation}
\label{P6:eq:ex:4:sol}
y(t)=\left(1-Q\int\limits_{0}^{1}\tau\nabla\tau\right)t
+Q\int\limits_{0}^{t}\tau\nabla \tau,
\end{equation}
where $Q=\frac{\mathcal{F}_{2}}{2}\left(\frac{\mathcal{F}_{1}}{(\mathcal{F}_{2})^{2}}+\lambda\right)$.
Therefore, the solution of equation \eqref{P6:eq:23} depends on the time scale.
Let us consider $\mathbb{T}=\mathbb{R}$ and $\mathbb{T}=\left\{0,\frac{1}{2},1\right\}$.

On $\mathbb{T}=\mathbb{R}$, from \eqref{P6:eq:ex:4:sol} we obtain that
$y(t)=\frac{2-Q}{2}t+\frac{Q}{2}t^{2}$.
Substituting this expression for $y$ into the integrals
$\mathcal{F}_{1}$ and $\mathcal{F}_{2}$,
gives $\mathcal{F}_{1}=\frac{Q^{2}+12}{12}$ and $\mathcal{F}_{2}=\frac{Q+6}{12}$.
Using the given isoperimetric constraint,
we obtain $Q=6$, $\lambda =8$, and
$y(t)=3t^{2}-2t$.
Let us consider now the time scale $\mathbb{T}=\left\{0,\frac{1}{2},1\right\}$.
From \eqref{P6:eq:ex:4:sol}, we have
$$
y(t)=\frac{4-3Q}{4}t+Q\sum\limits_{k=1}^{2t}\frac{k}{4}
=
\begin{cases}
0, & \hbox{ if } t=0,\\
\frac{4-Q}{8}, & \hbox{ if } t=\frac{1}{2},\\
1, & \hbox{ if } t=1.
\end{cases}
$$
Simple calculations show that
\begin{equation*}
\begin{split}
&
\mathcal{F}_{1}=\sum\limits_{k=0}^{1}\frac{1}{2} \left(y^{\Delta}\left(\frac{k}{2}\right)\right)^{2}
=\frac{1}{2}\left(y^{\Delta}(0)\right)^{2}
+\frac{1}{2}\left(y^{\Delta}\left(\frac{1}{2}\right)\right)^{2}=\frac{Q^{2}+16}{16},
\\
&
\mathcal{F}_{2}=\sum\limits_{k=1}^{2}\frac{1}{4}k y^{\nabla}\left(\frac{k}{2}\right)
=\frac{1}{4} y^{\nabla}\left(\frac{1}{2}\right)+\frac{1}{2}y^{\nabla}(1)=\frac{Q+12}{16}
\end{split}
\end{equation*}
and $\mathcal{K}(y)=\frac{Q+12}{16}=1$. Therefore, $Q=4$, $\lambda=6$, and we have the extremal
$$
y(t)=
\begin{cases}
0, &  \hbox{ if } t \in \left\{0, \frac{1}{2}\right\},\\
1, &  \hbox{ if } t=1.
\end{cases}
$$
\end{example}


\section{Conclusions}
\label{sec:conc}

In this survey we collected some of our recent research
on direct and inverse problems of the calculus of variations
on arbitrary time scales. For infinity horizon variational problems
on time scales we refer the reader to \cite{MyID:254,MR2994055}.
We started by studying inverse problems of the calculus of variations,
which have not been studied before in the time-scale framework.
First we derived a general form of a variational functional
which attains a local minimum at a given function $y_{0}$ under
Euler--Lagrange and strengthened Legendre conditions (Theorem~\ref{P3:th:integrand:2}).
Next we considered a new approach to the inverse problem of the calculus of variations
by using an integral perspective instead of the classical differential point of view.
In order to solve the problem, we introduced new definitions: (i) self-adjointness
of an integro-differential equation, and (ii) equation of variation.
We obtained a necessary condition for an integro-differential equation
to be an Euler--Lagrange equation on an arbitrary time scale $\mathbb{T}$
(Theorem~\ref{P4:th:necess:EL:int:diff}). It remains open the question of sufficiency.
Finally, we developed the direct calculus of variations
by considering functionals that are a composition of a certain scalar function
with delta and nabla integrals of a vector valued field. For such problems we obtained
delta-nabla Euler--Lagrange equations in integral form (Theorem~\ref{P6:th:main}),
transversality conditions (Theorems~\ref{P6:th:trans:initial} and \ref{P6:th:trans:terminal})
and necessary optimality conditions for isoperimetric problems (Theorem~\ref{P6:th:conds:iso}).
To consider such general mixed delta-nabla variational problems on unbounded time scales
(infinite horizon problems) remains also an open direction of research. Another interesting
open research direction consists to study delta-nabla inverse problems of calculus
of variations for composition functionals and their conservation laws \cite{MR2098297}.


\section*{Acknowledgments}

This work was partially supported by Portuguese funds through the
\emph{Center for Research and Development in Mathematics and Applications} (CIDMA),
and \emph{The Portuguese Foundation for Science and Technology} (FCT), within
project UID/MAT/ 04106/2013. The authors are grateful to two anonymous
referees for their valuable comments and suggestions.



\end{document}